\newcommand{\C}{{\mathbb {C}}}
\newcommand{\R}{{\mathbb {R}}}
\newcommand{\Q}{{\mathbb {Q}}}
\newcommand{\Z}{{\mathbb {Z}}}
\newcommand{\F}{{\mathcal {F}}}
\newcommand{\G}{{\mathcal {G}}}
\newcommand{\B}{{\mathcal {B}}}
\newcommand{\suchthat}{\mathrel{|}}
\newcommand{\arrow} [1] [\gamma]{\overset{#1}{\to}}
\newtheorem{thm}{Theorem}[section]
\newtheorem{lem}[thm]{Lemma}
\newtheorem{prop}[thm]{Proposition}
\newtheorem{cor}[thm]{Corollary}
\theoremstyle{definition}
\newtheorem{ex}{Example}
\theoremstyle{remark}
\newtheorem{rem}{Remark}
\theoremstyle{nonumber}
\newtheorem{claim}{Claim}
\title{The $T$-equivariant Integral Cohomology Ring of $E_6/T$}
\date{\today}
\author{Takashi Sato}
\address{Department of Mathematics,
Graduate School of Science,
Kyoto University, Sakyo-ku Kyoto-shi, Kyoto, 606-8502, Japan.}
\email{t-sato@math.kyoto-u.ac.jp}
\keywords{flag manifold, GKM theory, equivariant Leray-Hirsch theorem, exceptional Lie group}
\begin{document}

\maketitle

\begin{abstract}
We prove the equivariant Leray-Hirsch theorem combinatorially
for sufficiently good torus equivariant fiber bundles
consisting of homogeneous spaces of Lie groups.
We apply this theorem to determining the equivariant integral
cohomology ring of the flag manifold of type $E_6$
and express it explicitly as a quotient ring of a polynomial ring.
\end{abstract}

\section{\textsc{Introduction}}
Let $G$ be a compact connected Lie group and
$T$ its maximal torus.
The homogeneous space $G/T$ is called a flag manifold
and it plays an important role in topology,
algebraic geometry, representation theory, and combinatorics.
The maximal torus $T$ acts on $G/T$ by the left multiplication
and the equivariant cohomology ring $H^*_T(G/T) = H^*(ET \times_T G/T)$
is very important.

The GKM theory,
which was established by Goresky, Kottwitz, and MacPherson \cite{GKM},
gives us a powerful method to compute equivariant cohomology rings.
In particular, the equivariant cohomology of $G/T$ can be computed as follows.
The fixed point set $(G/T)^T$ is the Weyl group $W(G)$
and
the inclusion $i \colon (G/T)^T \to G/T$ induces
a ring homomorphism
\[
i^* \colon H^*_T(G/T) \to H^*_T((G/T)^T) = \prod_{W(G)} H^*(BT) = {\rm Map}(W(G),H^*(BT)).
\]
The localization theorem claims that
$i^*$ tensored with $\C$ is injective.
Hence we need to compute its image.
Guillemin and Zara \cite{GZ} restated the GKM theory
and claimed that its image is completely determined by
a special graph obtained from the subspace of $G/T$
consisting all orbits of dimension at most one.
This special graph is called the GKM graph of $G/T$.
We can compute the equivariant cohomology by these combinatorial data.
Harada, Henriques, and Holm \cite{HHH} showed that $i^*$ is injective
with integer coefficient under some assumption
and that the equivariant integral cohomology ring is also determined by
the GKM graph.

Let $G_\C$ be the complexification of $G$
and $B$ a Borel subgroup containing $T$.
It is well-known that $G/T$ and $G_\C/B$ is $T$-equivariant isomorphic.
When $P$ is a parabolic subgroup containing $B$,
the GKM graph of $P/B$ is a subgraph of the GKM graph of $G/T \cong G_\C/B$.
The GKM graph of $G_\C/B$ can be understood roughly
as the union of slightly modified copies of the subgraph.
This viewpoint ignores the structure of the GKM graph of $G_\C/P$
introduced by the natural projection $G_\C/B \to G_\C/P$.
Hence, if we know the equivariant cohomology of $P/B$ and $G_\C/P$,
we can determine $H^*_T(G_\C/B)$ combinatorially.
This statement is formulated as Theorem \ref{generators} and \ref{relations},
and by combining them we obtain the equivariant Leray-Hirsch theorem.

The ordinary integral cohomology ring of $E_6/T$
is determined by Toda and Watanabe \cite{TW}.
The fiber bundle 
\[
G/T \to ET \times_T G/T \to BT
\]
shows that the ordinary cohomology is easily obtained from the equivariant one.
Indeed, the results of this paper covers their results for $E_6/T$.

\section{\textsc{Abstract GKM graphs}}

In this section, we discuss graphs whose edges are equipped with ideals of some ring
and the ``cohomology'' rings of such graphs.

\subsection{Definitions}
Guillemin and Zara restated the GKM theory
and claim that the cohomology of a manifold with some good torus action
is determined by a graph obtained from the fixed points and invariant $2$-spheres of such manifold,
which is called a GKM graph (cf. \cite[Section 1]{GZ}).
In this subsection, we generalize their definition of GKM graphs in purely combinatorial terms.

Let $G=(V,E)$ be a finite graph, possibly with multiple edges and loops,
where $V$ and $E$ are the vertex and the edge sets, respectively,
and let $\alpha \colon E \to \mathcal{I}_R$ be a set function to the set $\mathcal{I}_R$ of all ideals of a given ring $R$.
We call the pair $(G, \alpha)$ a \textit{GKM graph} and the function $\alpha$ an \textit{axial function}.
For an edge $e$, we call $\alpha(e)$ the \textit{label} of $e$.

Next we define homomorphisms between GKM graphs.
Let $(G,\alpha)$ and $(G',\alpha')$ be GKM graphs
such that the targets of $\alpha$ and $\alpha'$ are $\mathcal{I}_R$ and $\mathcal{I}_{R'}$, respectively.
A homomorphism between GKM graphs consists of
a graph homomorphism $\phi \colon G \to G'$
and a ring homomorphism $\psi \colon R' \to R$
such that
\[
\psi(\alpha'(\phi(e))) \subset \alpha(e)
\]
for any edge $e$ of $G$.
If $G=G'$, then any ring homomorphism $\psi \colon R' \to R$
satisfying the above condition with $\phi = \mathrm{id}$ induces
a homomorphism $(G,\alpha) \to (G',\alpha')$,
which we denote by the same symbol $\psi$.

Finally we define the ``cohomology'' of a GKM graph.
Let $(G,\alpha)$ be a GKM graph with $G=(V,E)$
and $\alpha \colon E \to \mathcal{I}_R$.
A set function $f \colon V \to R$ is called a \textit{GKM function} on $(G,\alpha)$
if, for any vertices $u$ and $v \in V$ and any edge $e \in E$ between $u$ and $v$,
$f$ satisfies the condition that $f(u) - f(v)$ belongs to the ideal $\alpha(e)$.
This condition is called the \textit{GKM condition}.
It is clear that the set of all GKM functions on $(G,\alpha)$ forms a ring,
and we call it the \textit{cohomology} of $(G,\alpha)$ and denote it by $H^*(G,\alpha)$.
Notice that any homomorphism $(\phi,\psi) \colon (G,\alpha) \to (G',\alpha')$ between GKM graphs
induces a ring homomorphism $H^*(\phi,\psi) \colon H^*(G',\alpha') \to H^*(G,\alpha)$
defined by $H^*(\phi,\psi)(f) = \psi \circ f \circ \phi$ for $f \in H^*(G',\alpha')$,
and $H^*$ is a contravariant functor from the category of GKM graphs to the category of rings.

\subsection{Change of rings}
Let $(G,\alpha_1)$ and $(G,\alpha_2)$ be GKM graphs with the same underlying graph $G=(V,E)$
and $\alpha_i \colon E \to \mathcal{I}_{R_i}$ for $i=1,2$.
We consider the homomorphism between the cohomology rings of GKM graphs
$\psi^* = H^*(\psi) \colon H^*(G,\alpha_1) \to H^*(G,\alpha_2)$
induced from a ring homomorphism $\psi \colon R_1 \to R_2$.
Through this homomorphism,
we will translate generators of one of these cohomology rings and their relations
to those of the other cohomology ring and their relations.

More generally, we consider the following setting.
For $i=1,2$,
let $R_i$ be a commutative ring with unity,
$\psi \colon R_1 \to R_2$ a homomorphism, and $M_i$ an $R_i$-algebra.
We suppose two conditions hold:
\begin{enumerate}
\item
$M_1$ is a free $R_1$-module.
\item
$M_2$ is isomorphic with $R_2 \otimes_{R_1} M_1$.
\end{enumerate}
Then $\psi$ induces an $R_1$-module homomorphism $M_1 \to M_2$
and we denote it by $\psi^*$.

First let us consider the case where we know generators of $M_1$ and their relations.
Let $\{ g_\lambda \}_{\lambda \in \Lambda}$ be $R_1$-algebra generators of $M_1$.
Then they are also regarded as $R_2$-algebra generators of $M_2$.
Consider the projection from a polynomial ring
\begin{align*}
&\Xi_1 \colon R_1[x_\lambda \suchthat \lambda \in \Lambda] \to M_1, \quad x_\lambda \mapsto g_\lambda,\\
&\Xi_2 \colon R_2[x_\lambda \suchthat \lambda \in \Lambda] \to M_2, \quad x_\lambda \mapsto \psi^*(g_\lambda).
\end{align*}
The following proposition says that
relations of $g_\lambda$'s in $M_1$
can be regarded as relations of $\psi^*(g_\lambda)$'s in $M_2$.

\begin{prop}
\label{images of relations}
Let $\{ r_\theta \}_{\theta \in \Theta}$ be generators of $\mathrm{Ker} \: \Xi_1$.
Then $\{ \psi^*(r_\theta) \}_{\theta \in \Theta}$
generates $\mathrm{Ker} \: \Xi_2$.
\end{prop}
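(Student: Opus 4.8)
The plan is to realize the presentation $\Xi_2$ of $M_2$ as the base change along $\psi$ of the presentation $\Xi_1$ of $M_1$, and then to invoke right-exactness of the tensor product. Write $P_i = R_i[x_\lambda \suchthat \lambda \in \Lambda]$ for $i=1,2$. There are two canonical identifications to set up first: the isomorphism of $R_2$-algebras $R_2 \otimes_{R_1} P_1 \cong P_2$ (valid because $P_1$ is $R_1$-free on the monomials), and the isomorphism $R_2 \otimes_{R_1} M_1 \cong M_2$ of hypothesis~(2), under which, by the definition of $\psi^* \colon M_1 \to M_2$, the element $1 \otimes g_\lambda$ corresponds to $\psi^*(g_\lambda)$.

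First I would take the defining short exact sequence of $P_1$-modules
\[
0 \to \mathrm{Ker}\,\Xi_1 \xrightarrow{\iota} P_1 \xrightarrow{\Xi_1} M_1 \to 0
\]
and apply $R_2 \otimes_{R_1} (-)$. Because tensoring is right exact, the sequence
\[
R_2 \otimes_{R_1} \mathrm{Ker}\,\Xi_1 \xrightarrow{1 \otimes \iota} R_2 \otimes_{R_1} P_1 \xrightarrow{1 \otimes \Xi_1} R_2 \otimes_{R_1} M_1 \to 0
\]
is exact. Transporting it along the two identifications above turns the middle term into $P_2$, the right-hand term into $M_2$, and the map $1 \otimes \Xi_1$ into the $R_2$-algebra homomorphism sending $x_\lambda \mapsto 1 \otimes g_\lambda = \psi^*(g_\lambda)$, that is, into $\Xi_2$. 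Hence $\mathrm{Ker}\,\Xi_2$ is precisely the image of $1 \otimes \iota$ in $P_2$, which is the $R_2$-submodule spanned by $\psi^*(\mathrm{Ker}\,\Xi_1)$, where here $\psi^* \colon P_1 \to P_2$ denotes the coefficientwise extension of $\psi$.

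Finally I would check that this $R_2$-span coincides with the ideal of $P_2$ generated by $\{\psi^*(r_\theta)\}_{\theta \in \Theta}$. On one hand, since $\{r_\theta\}$ generates the ideal $\mathrm{Ker}\,\Xi_1$, every element of $\mathrm{Ker}\,\Xi_1$ has the form $\sum_\theta p_\theta r_\theta$ with $p_\theta \in P_1$ (finite sum), so $\psi^*(\mathrm{Ker}\,\Xi_1) \subset \sum_\theta P_2\,\psi^*(r_\theta)$. On the other hand, each $\psi^*(r_\theta)$ lies in $\psi^*(\mathrm{Ker}\,\Xi_1)$, and every monomial of $P_2$ lies in $R_2 \cdot \psi^*(P_1)$ because it carries coefficient $1 = \psi(1)$; consequently the $R_2$-span of $\psi^*(\mathrm{Ker}\,\Xi_1)$ already contains $\sum_\theta P_2\,\psi^*(r_\theta)$. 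The two submodules therefore agree, and $\{\psi^*(r_\theta)\}_{\theta \in \Theta}$ generates $\mathrm{Ker}\,\Xi_2$.

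The only genuinely delicate point will be the bookkeeping in the middle step: one must verify that, under the chosen identifications, the base-changed map $1 \otimes \Xi_1$ is literally $\Xi_2$ and not merely abstractly isomorphic to it, which is exactly where hypothesis~(2) and the definition of $\psi^*$ on $M_1$ enter. I also note that freeness of $M_1$ (hypothesis~(1)) plays no role here — only right-exactness of $R_2 \otimes_{R_1}(-)$ is used — whereas freeness would be relevant only if one additionally wanted the injectivity of $1 \otimes \iota$.
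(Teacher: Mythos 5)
Your proposal is correct and takes essentially the same route as the paper: apply the right-exact functor $R_2 \otimes_{R_1}(-)$ to a presentation of $M_1$ and identify the base-changed map with $\Xi_2$ via hypothesis (2). The paper simply tensors the exact sequence $\bigoplus_{\theta} R_1[x_\lambda]r_\theta \to R_1[x_\lambda] \to M_1 \to 0$ directly, which makes your final span-versus-ideal verification unnecessary, but the content (including your correct observation that freeness of $M_1$ is not needed here) is the same.
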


\begin{proof}
Applying the right exact functor $R_2 \otimes_{R_1} \cdot$ to the exact sequence
\[
\bigoplus_{\theta \in \Theta} R_1[x_\lambda \suchthat \lambda \in \Lambda]r_\theta \to R_1[x_\lambda \suchthat \lambda \in \Lambda] \to M_1 \to 0,
\]
we obtain an exact sequence
\[
\bigoplus_{\theta \in \Theta} R_2[x_\lambda \suchthat \lambda \in \Lambda]\psi^*(r_\theta) \to R_2[x_\lambda \suchthat \lambda \in \Lambda] \to R_2 \otimes_{R_1} M_1 \to 0,
\]
where $R_2 \otimes_{R_1} M_1 \cong M_2$ by the second condition.
Therefore $\{ \psi^*(r_\theta) \}_{\theta \in \Theta}$ generates $\mathrm{Ker} \: \Xi_2$.
\end{proof}

Next let us consider the case where we know generators of $M_2$ and their relations.
In this case, for $i=1,2$,
we suppose that $R_i$ and $M_i$ are non-negatively graded.
Let $R_i^n$ be the submodule of $R_i$ consisting of all elements of degree $n$
and $R_i^+$ the submodule consisting of all elements of positive degree.
Moreover we suppose that $\psi$ induces an isomorphism $R_1/R_1^+ \cong R_2/R_2^+$ as $R_1$-modules.
For a set function $\phi \colon X \to Y$ and an element $y \in Y$,
$x \in X$ is called a \textit{lift} of $y$ through $\phi$ when $\phi(x)=y$.
\begin{prop}
\label{lifts of generators}
Let $\{ g_\lambda \}_{\lambda \in \Lambda}$ be $R_2$-algebra generators of $M_2$.
Then any lifts $\{ \tilde{g}_\lambda \}_{\lambda \in \Lambda}$ of $\{ g_\lambda \}_{\lambda \in \Lambda}$
through $\psi^*$ are $R_1$-algebra generators of $M_1$.
\end{prop}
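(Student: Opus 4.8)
The plan is to use a graded Nakayama-type argument. Set $\bar{M}_1 = M_1 / R_1^+ M_1$, which by condition (1) is a free $R_1/R_1^+$-module, and observe that the composite $R_2 \otimes_{R_1} M_1 \to M_2 \to M_2/R_2^+ M_2$ together with the isomorphism $R_1/R_1^+ \cong R_2/R_2^+$ identifies $\bar{M}_1$ with $M_2 / R_2^+ M_2 =: \bar{M}_2$ as graded $R_1/R_1^+$-modules, compatibly with $\psi^*$. First I would reduce the statement to showing that the $\tilde g_\lambda$ generate $M_1$ as an $R_1$-\emph{module together with products} — more precisely, letting $A \subseteq M_1$ be the $R_1$-subalgebra generated by the $\tilde g_\lambda$, I want to show $A = M_1$. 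Since everything is non-negatively graded and $R_1$-modules in each degree are finitely generated-enough for the argument (the $\Lambda$-indexed generating set of $M_2$ can be taken so that this works degreewise), it suffices to prove $A_n = M_1^n$ for all $n$ by induction on $n$.

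The base case $n = 0$: in degree zero $A_0$ contains the image of $R_1$ and the degree-zero parts of the $\tilde g_\lambda$; but $M_1^0 / (R_1^+ M_1)^0$ is spanned by the images of the $g_\lambda$ of degree $0$ under the identification $\bar M_1 \cong \bar M_2$, and $(R_1^+ M_1)^0 = R_1^+{}^{,0} M_1^0$ contributes nothing new, so $M_1^0 = R_1^0 \cdot\{\text{constant parts}\} \subseteq A_0$; one checks $A_0 = M_1^0$ directly. For the inductive step, suppose $A_m = M_1^m$ for all $m < n$. Take any $u \in M_1^n$. Its image $\bar u \in \bar M_2^n$ is an $R_2/R_2^+$-combination of the images $\bar g_\lambda$, and since $R_2/R_2^+$ is concentrated in degree $0$ and agrees with $R_1/R_1^+$, lifting this combination using the $\tilde g_\lambda$ produces an element $a \in A_n$ with $u - a \in (R_1^+ M_1)^n = \sum_{k>0} R_1^k M_1^{n-k}$. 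By the inductive hypothesis $M_1^{n-k} = A_{n-k}$ for $k \ge 1$, hence $R_1^k M_1^{n-k} \subseteq R_1 \cdot A \subseteq A$, so $u - a \in A_n$, giving $u \in A_n$. This closes the induction and yields $M_1 = A$.

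The main obstacle I anticipate is the bookkeeping needed to make the degreewise induction legitimate when $\Lambda$ is infinite or when $M_1$ is not finitely generated in each degree: one must be careful that "lifting an $R_2/R_2^+$-linear combination of the $\bar g_\lambda$" only ever involves finitely many $\lambda$ and finitely many monomials in those $\lambda$, so that the resulting $a$ genuinely lies in the subalgebra generated by the $\tilde g_\lambda$. This is where the non-negative grading and the fact that $M_2$ is generated \emph{as an $R_2$-algebra} (so each homogeneous element is a polynomial, hence a finite expression, in the $g_\lambda$) is essential, and I would spell out that each graded piece of the free module $M_1$ is handled by only finitely much data. A secondary subtlety is checking that the identification $R_2 \otimes_{R_1} M_1 \cong M_2$ of condition (2) is compatible with the gradings and with $\psi^*$ in the sense used above; this is routine once one fixes the grading conventions, but it should be stated. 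Everything else — the Nakayama-style reduction and the induction on degree — is then formal.
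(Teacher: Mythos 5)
Your argument is correct, and it is the same Nakayama-type idea as the paper's, but implemented differently. The paper forms the cokernel $N$ of $R_1[x_\lambda \suchthat \lambda \in \Lambda] \to M_1$, $x_\lambda \mapsto \tilde{g}_\lambda$, applies the right exact functor $R_2 \otimes_{R_1} \cdot$ to get $R_2 \otimes_{R_1} N = 0$ from the surjectivity of $R_2[x_\lambda] \to R_2 \otimes_{R_1} M_1 \cong M_2$, uses $R_1/R_1^+ \cong R_2/R_2^+$ to conclude $N/R_1^+N = 0$, and then cites the graded Nakayama lemma. You instead work directly with the subalgebra $A$ generated by the $\tilde{g}_\lambda$ and prove $A = M_1$ by induction on degree, using the identification $M_1/R_1^+M_1 \cong M_2/R_2^+M_2$; this amounts to reproving the relevant instance of graded Nakayama inline, so it is more self-contained and makes visible exactly where each hypothesis enters, while the paper's version is shorter and reuses the same tensor-functor template in Propositions \ref{images of relations} and \ref{lifts of relations}. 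Two small remarks: the finiteness worry you raise is a non-issue, since an algebra expression is by definition a finite polynomial in finitely many $g_\lambda$ (as you yourself note), so no degreewise finite generation is needed; and condition (1) (freeness of $M_1$) is not actually used in your argument, nor in the paper's proof of this proposition --- it is needed only for the later statement about lifting relations. The only imprecision is the phrase ``$R_2/R_2^+$-combination of the images $\bar{g}_\lambda$'', which should read ``polynomial expression in the $\bar{g}_\lambda$ over $R_2/R_2^+$'' (the $g_\lambda$ generate $M_2$ as an algebra, not as a module); since you lift the whole polynomial into $A$, the argument is unaffected. As in the paper, one tacitly takes the $g_\lambda$ and $\tilde{g}_\lambda$ homogeneous so that the graded bookkeeping (your restriction to $A_n$, the paper's gradedness of $N$) is legitimate.
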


\begin{proof}
Let $N$ denote the cokernel of the $R_1$-algebra homomorphism
$R_1[x_\lambda \suchthat \lambda \in \Lambda] \to M_1$ defined by $x_\lambda \mapsto \tilde{g}_\lambda$.
We will show $N=0$.
Applying the right exact functor $R_2 \otimes_{R_1} \cdot$ to the exact sequence
\[
R_1[x_\lambda \suchthat \lambda \in \Lambda] \to M_1 \to N \to 0,
\]
we obtain an exact sequence
\[
R_2[x_\lambda \suchthat \lambda \in \Lambda] \to R_2 \otimes_{R_1} M_1 \to R_2 \otimes_{R_1} N \to 0.
\]
By the hypothesis, $R_2[x_\lambda \suchthat \lambda \in \Lambda] \to R_2 \otimes_{R_1} M_1 \cong M_2$ is surjective,
and then $R_2 \otimes_{R_1} N = 0$.
Since $R_1/R_1^+ \cong R_2/R_2^+$,
we have $N \otimes_{R_1} R_1/R_1^+ \cong N \otimes_{R_1} R_2 \otimes_{R_2} R_2/R_2^+ = 0$.
By the graded Nakayama's lemma (cf. \cite[Lemma 13.4]{P}),
$N /R_1^+N = 0$ implies $N = 0$.
\end{proof}

Consider the projections
\begin{align*}
&\Xi_1 \colon R_1[x_\lambda \suchthat \lambda \in \Lambda] \to M_1, \quad x_\lambda \mapsto \tilde{g}_\lambda,\\
&\Xi_2 \colon R_2[x_\lambda \suchthat \lambda \in \Lambda] \to M_2, \quad x_\lambda \mapsto g_\lambda
\end{align*}
and the homomorphism $R_1[x_\lambda \suchthat \lambda \in \Lambda] \to R_2[x_\lambda \suchthat \lambda \in \Lambda]$
induced by $\psi$, which we also denote by $\psi^*$.

\begin{prop}
\label{lifts of relations}
Let $\{ r_\theta \}_{\theta \in \Theta}$ be generators of $\mathrm{Ker} \: \Xi_2$.
Then any lifts $\{ \tilde{r}_\theta \}_{\theta \in \Theta}$
of $\{ r_\theta \}_{\theta \in \Theta}$ through $\psi^*$
contained in $\mathrm{Ker} \: \Xi_1$ generates $\mathrm{Ker} \: \Xi_1$.
\end{prop}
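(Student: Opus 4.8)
The plan is to follow the pattern of the proofs of Propositions~\ref{images of relations} and~\ref{lifts of generators}: introduce the module $N$ that measures the failure of $\{\tilde{r}_\theta\}$ to generate $\mathrm{Ker}\:\Xi_1$, show $R_2\otimes_{R_1}N=0$ by a right-exactness argument resting on the freeness of $M_1$, and then conclude $N=0$ by the graded Nakayama lemma. First I would record that the square
\[
\begin{array}{ccc}
R_1[x_\lambda \suchthat \lambda \in \Lambda] & \xrightarrow{\Xi_1} & M_1 \\
\downarrow & & \downarrow \\
R_2[x_\lambda \suchthat \lambda \in \Lambda] & \xrightarrow{\Xi_2} & M_2
\end{array}
\]
with both vertical maps equal to $\psi^*$ commutes, because both composites are ring homomorphisms restricting to $\psi$ on coefficients and sending $x_\lambda \mapsto g_\lambda$; in particular $\psi^*(\mathrm{Ker}\:\Xi_1)\subset\mathrm{Ker}\:\Xi_2$, so the hypothesis that the lifts $\tilde{r}_\theta$ can be taken in $\mathrm{Ker}\:\Xi_1$ is meaningful. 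Write $I_i=\mathrm{Ker}\:\Xi_i$, let $J\subset I_1$ be the ideal of $R_1[x_\lambda \suchthat \lambda \in \Lambda]$ generated by $\{\tilde{r}_\theta\}$, and set $N=I_1/J$, a non-negatively graded $R_1$-module; the goal is $N=0$.

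The crux is the identification $R_2\otimes_{R_1}I_1\cong I_2$ as submodules of $R_2[x_\lambda \suchthat \lambda \in \Lambda]$. Since $M_1$ is a free $R_1$-module by condition~(1), the short exact sequence of $R_1$-modules $0\to I_1\to R_1[x_\lambda \suchthat \lambda \in \Lambda]\xrightarrow{\Xi_1}M_1\to 0$ splits, hence remains exact after applying $R_2\otimes_{R_1}\cdot$; using condition~(2) and the commuting square, it becomes precisely $0\to R_2\otimes_{R_1}I_1\to R_2[x_\lambda \suchthat \lambda \in \Lambda]\xrightarrow{\Xi_2}M_2\to 0$, and under this identification the canonical map $I_1\to R_2\otimes_{R_1}I_1$ is the restriction of $\psi^*$. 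Now apply the right exact functor $R_2\otimes_{R_1}\cdot$ to $J\hookrightarrow I_1\to N\to 0$, obtaining an exact sequence $R_2\otimes_{R_1}J\to R_2\otimes_{R_1}I_1\to R_2\otimes_{R_1}N\to 0$. The composite of the surjection $\bigoplus_\theta R_2[x_\lambda \suchthat \lambda \in \Lambda]\twoheadrightarrow R_2\otimes_{R_1}J$, $e_\theta\mapsto 1\otimes\tilde{r}_\theta$, with $R_2\otimes_{R_1}J\to R_2\otimes_{R_1}I_1\cong I_2$ sends $e_\theta\mapsto\psi^*(\tilde{r}_\theta)=r_\theta$, so its image is the ideal generated by $\{r_\theta\}$, which equals $I_2$ since $\{r_\theta\}$ generates $\mathrm{Ker}\:\Xi_2$. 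Hence $R_2\otimes_{R_1}J\to R_2\otimes_{R_1}I_1$ is surjective and $R_2\otimes_{R_1}N=0$.

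Finally, since $\psi$ induces an isomorphism $R_1/R_1^+\cong R_2/R_2^+$ of $R_1$-modules, we get $N/R_1^+N\cong N\otimes_{R_1}R_1/R_1^+\cong(R_2\otimes_{R_1}N)\otimes_{R_2}R_2/R_2^+=0$, and the graded Nakayama lemma (\cite[Lemma~13.4]{P}) forces $N=0$, i.e.\ $J=I_1$. The step that genuinely needs care is the identification $R_2\otimes_{R_1}I_1\cong I_2$ compatibly with the inclusions into $R_2[x_\lambda \suchthat \lambda \in \Lambda]$ and with $\psi^*$: this is exactly where the freeness of $M_1$ (so that the defining short exact sequence of $I_1$ survives base change) and the compatibility of $\psi^*$ with $\Xi_1$, $\Xi_2$ are used; everything else is a formal diagram chase together with graded Nakayama, just as in the two preceding propositions.
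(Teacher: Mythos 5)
Your proposal is correct and follows essentially the same argument as the paper: the module $N$ you define (the quotient of $\mathrm{Ker}\:\Xi_1$ by the ideal generated by the $\tilde{r}_\theta$'s) is exactly the cokernel the paper considers, the identification $R_2\otimes_{R_1}\mathrm{Ker}\:\Xi_1\cong\mathrm{Ker}\:\Xi_2$ via freeness of $M_1$ is the paper's key step, and the conclusion via right exactness and graded Nakayama is identical. Your extra remarks on the commuting square and the surjection onto $R_2\otimes_{R_1}J$ just make explicit what the paper leaves implicit.
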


\begin{proof}
Let $N$ denote the cokernel of the natural $R_1$-module homomorphism
$\bigoplus_{\theta \in \Theta} R_1[x_\lambda \suchthat \lambda \in \Lambda]\tilde{r}_\theta \to \mathrm{Ker} \: \Xi_1$.
We will show $N=0$.
Since $M_1$ is $R_1$-free,
applying the right exact functor $R_2 \otimes_{R_1} \cdot$ to the exact sequences
\[
\bigoplus_{\theta \in \Theta} R_1[x_\lambda \suchthat \lambda \in \Lambda]\tilde{r}_\theta \to \mathrm{Ker} \: \Xi_1 \to N \to 0
\]
and
\[
0 \to \mathrm{Ker} \: \Xi_1 \to R_1[x_\lambda \suchthat \lambda \in \Lambda] \to M_1 \to 0,
\]
we obtain two exact sequences
\[
\bigoplus_{\theta \in \Theta} R_2[x_\lambda \suchthat \lambda \in \Lambda]r_\theta \to R_2 \otimes_{R_1} \mathrm{Ker} \: \Xi_1 \to R_2 \otimes_{R_1} N \to 0
\]
and 
\[
0 \to R_2 \otimes_{R_1} \mathrm{Ker} \: \Xi_1 \to R_2[x_\lambda \suchthat \lambda \in \Lambda] \to M_2 \to 0,
\]
respectively.
The second exact sequence says that $R_2 \otimes_{R_1} \mathrm{Ker} \: \Xi_1 \cong \mathrm{Ker} \: \Xi_2$,
and then the first one says that $R_2 \otimes_{R_1} N = 0$.
Since $R_1/R_1^+ \cong R_2/R_2^+$,
we have $N \otimes_{R_1} R_1/R_1^+ \cong N \otimes_{R_1} R_2 \otimes_{R_2} R_2/R_2^+ = 0$.
By the graded Nakayama's lemma, $N /R_1^+N = 0$ implies $N = 0$.
\end{proof}

\section{GKM graphs for Lie groups}
In this section, 
we define the GKM graphs of (general) flag manifolds
and reveal the module structure of their cohomology.
Moreover we give a powerful method to analyze their cohomology rings.
\subsection{Definition}
Let $G$ be a compact connected Lie group
and $T$ a maximal torus of $G$.
We will use terminology in Schubert calculus,
so it is convenient to consider the complexification $G_\C$ of $G$.
Let $\Phi$ and $W$ denote the root system and the Weyl group of $G_\C$, respectively.
We denote the reflection corresponding to a root $\alpha \in \Phi$ by $\sigma_\alpha$.
In the standard manner, we regard $\Phi$ as the subgroup of $H^2(BT)$.
The Weyl group $W$ acts naturally on $H^2(BT)$
which restricts to the action on $\Phi$.
Let $B$ be a Borel subgroup of $G_\C$ containing $T$.
Then we have a $T$-equivariant isomorphism
\[
G_\C/B \cong G/T
\]
where $T$ acts on both sides by the left multiplication.

Note that the complexification of a connected compact Lie group is reductive
(cf. \cite[Theorem 5.8]{Ka}).
Let $P$ be a parabolic subgroup of $G_\C$ containing the Borel subgroup $B$
and $W_P$ the Weyl group of $P$
(cf. \cite[Section 28 and 29]{Hu}).
Of course, $W_B=\{e\}$.
We define the GKM graph $\G(G_\C/P)$ as follows.
The vertex set of $\G(G_\C/P)$ is $W/W_P$.
Moreover for any vertices $vW_P \neq wW_P$,
if there is a positive root $\alpha \in \Phi$ such that $\sigma_\alpha vW_P = wW_P$,
there is a corresponding edge and its label is the ideal $(\alpha) \subset H^*(BT)$,
where $(x_1, \ldots , x_n)$ means the ideal generated by $x_1, \ldots , x_n$.
We will not distinguish the ideal $(\alpha)$ and its generator $\alpha$,
and so we call $\alpha$ the label of an edge $e$.
To accord with our definition of GKM graph homomorphisms,
we will treat $\G(G_\C/P)$ as if it has a loop labelled by the ideal $\{ 0 \} \subset H^*(BT)$ at each vertex.
Note that the existence of a loop (with any label) does not affect the cohomology of a GKM graph.
The following theorem claims that
the GKM graph of $\G(G_\C/P)$ completely determines the equivariant integral cohomology of $G_\C/P$.

\begin{thm}[{\cite[Theorem 3.1 and Lemma 5.2]{HHH}}]
\label{HHH}
Suppose that the set $\Phi^+$ of the positive roots of $G$ is pairwise relatively prime,
that is, any two of $\Phi^+$ are relatively prime in $H^*(BT)$.
Then there is an isomorphism
\[
H^*_T(G_\C/P) \cong H^*(\G(G_\C/P)).
\]
\end{thm}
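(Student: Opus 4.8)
Since the statement is essentially a reformulation of \cite[Theorem 3.1 and Lemma 5.2]{HHH} in the combinatorial language introduced above, the plan is to recall the abstract GKM theorem proved there and to verify that the flag variety $G_\C/P$ — equivalently the compact homogeneous space $G/(G\cap P)$, with $T$ acting by left translation — meets its hypotheses. The first step is the Bruhat cell structure: choosing minimal-length representatives of the cosets in $W/W_P$ one obtains a $T$-stable filtration of $G_\C/P$ by closed subvarieties whose successive complements are $T$-invariant affine spaces, each containing exactly one $T$-fixed point, so that $(G_\C/P)^T = W/W_P$ and all cells have even real codimension. Consequently $H^*(G_\C/P)$ is free over $\Z$ and concentrated in even degrees, the Borel fibration $G_\C/P \to ET \times_T G_\C/P \to BT$ is equivariantly formal (its Serre spectral sequence degenerates for degree reasons), and $H^*_T(G_\C/P)$ is a free $H^*(BT)$-module; since this module is torsion-free over the domain $H^*(BT)$ while $i^*$ becomes injective after inverting a suitable set of nonzero elements of $H^*(BT)$ (the localization theorem), the restriction
\[
i^* \colon H^*_T(G_\C/P) \longrightarrow H^*_T((G_\C/P)^T) = \mathrm{Map}(W/W_P,\,H^*(BT))
\]
is injective over $\Z$, not merely after tensoring with $\C$. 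Everything therefore reduces to computing the image of $i^*$.

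Next I would read off the one-skeleton. Through the fixed point $wW_P$ there passes, for every positive root $\alpha$ with $\sigma_\alpha wW_P \neq wW_P$, a $T$-invariant two-sphere joining $wW_P$ and $\sigma_\alpha wW_P$ — the closure of a one-dimensional $T$-orbit, obtained from the rank-one subgroup attached to $\alpha$ — on which $T$ acts with weights $\alpha$ and $-\alpha$ at the two poles. These two-spheres realize exactly the edges of $\G(G_\C/P)$ together with their labels $(\alpha)$. Because the $T$-equivariant cohomology of such a sphere is $\{(f_0,f_1) : f_0 - f_1 \in (\alpha)\}$, restricting a class of $H^*_T(G_\C/P)$ to each of them gives at once the inclusion $\mathrm{Im}\,i^* \subseteq H^*(\G(G_\C/P))$; the substance of the theorem is the reverse inclusion.

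The hypothesis enters through the local weights. The $T$-weights on the tangent space of $G_\C/P$ at $wW_P$ are $\{ w\beta \}$ with $\beta$ running over the roots occurring in $\mathfrak g/\mathfrak p$, a subset of $\Phi$ containing no pair of opposite roots; as $W$ permutes $\Phi$ and preserves this property, any two distinct such weights are, up to sign, two distinct elements of $\Phi^+$, hence relatively prime in $H^*(BT)$ by assumption — and likewise for any sub-collection of these weights. This is precisely the input demanded by \cite[Sections 3 and 5]{HHH}: their Theorem 3.1 identifies $\mathrm{Im}\,i^*$ with the subring of $\mathrm{Map}((G_\C/P)^T, H^*(BT))$ cut out by the one-skeleton congruences, and their Lemma 5.2 matches this subring with the combinatorial ring $H^*(\G(G_\C/P))$. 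Together with the injectivity of $i^*$ from the first step, this yields the asserted isomorphism.

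The main obstacle I foresee is exactly the reverse inclusion $H^*(\G(G_\C/P)) \subseteq \mathrm{Im}\,i^*$, i.e. lifting an arbitrary GKM function to a genuine equivariant class — the core of \cite{HHH}. I would prove it by induction up the Bruhat filtration: passing from one closed stratum to the next attaches a single $T$-invariant affine cell, and the long exact sequence of the corresponding pair reduces extending a class over that cell to one divisibility condition at the newly added fixed point $p$, namely that a prescribed element of $H^*(BT)$ be divisible by the equivariant Euler class of the cell, which is a product of some of the tangent weights at $p$. The edge conditions of the GKM function supply only divisibility by each of those weights individually, and these combine to divisibility by the product precisely because the weights are pairwise relatively prime (equivalently, the relevant Koszul complex is exact). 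This is the one place where the hypothesis on $\Phi^+$ is spent; without it the inclusion, and hence the theorem, genuinely fails.
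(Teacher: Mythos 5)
The paper itself gives no proof of this statement — it is imported verbatim from Harada–Henriques–Holm \cite{HHH} — so there is nothing internal to compare against, and your sketch is in substance a reconstruction of their argument: Bruhat stratification by even-dimensional $T$-invariant cells indexed by $W/W_P$, integral injectivity of restriction to the fixed set, identification of the one-skeleton with the labelled edges of $\G(G_\C/P)$, and an induction up the filtration in which extending a class over a newly attached cell is an Euler-class divisibility condition, with the pairwise relative primality of $\Phi^+$ used exactly to upgrade the edgewise congruences to divisibility by the product of the downward weights. The outline is correct; the only cosmetic difference is that \cite{HHH} obtain integral injectivity directly from the stratification (the Euler classes being non-zero-divisors in $H^*(BT)$), whereas you route it through the localization theorem plus torsion-freeness, which also works since $H^*_T(G_\C/P)$ is free over $H^*(BT)$ by the cell structure.
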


\subsection{Divided difference operators}
Let us introduce the action of the Weyl group $W$ on $H^*(\G(G_\C/P))$:
for a GKM function $f$ on $\G(G_\C/P)$, $w \in W$, and a vertex $v$ of $\G(G_\C/P)$,
\[
(w \cdot f)(v) = w(f(w^{-1}v)).
\]
Assume that the set $\Phi^+$ of all the positive roots is pairwise relatively prime in $H^*(BT)$.
For $\alpha \in \Phi$,
we can define an operator $\delta_\alpha \colon H^*(\G(G_\C/P)) \to H^*(\G(G_\C/P))$,
called the \textit{left divided difference operator}, as follows;
for a GKM function $f$ on $\G(G_\C/P)$ and a vertex $v$ of $\G(G_\C/P)$,
\[
\delta_\alpha f(v) = \frac{1}{\alpha}(f(v)-\sigma_\alpha \cdot f(v)).
\]
We use the left divided difference operators 
and obtain $H^*(BT)$-module basis of $H^*(\G(G_\C/P))$ in the next subsection.

Let us verify that $\delta_\alpha f$ is a GKM function on $\G(G_\C/P)$.
First we will check that
$\delta_\alpha f(v) = \frac{1}{\alpha}(f(v)-\sigma_\alpha \cdot f (v))$
is well-defined and contained in $H^*(BT)$.
We have
\begin{equation}
\label{mult. of alpha}
  f(v) -\sigma_\alpha \cdot f (v) = (1-\sigma_\alpha)f(v) +\sigma_\alpha(f(v) -f(\sigma_\alpha v)).
\end{equation}
The second term of the right-hand side is a multiple of $\alpha$.
Since $H^*(BT)$ is generated by elements of degree $2$,
write $f(v) = \sum_i \prod_{j_i} x_{i,j_i}$, where $x_{i,j_i} \in H^2(BT)$.
Then we have
\[
(1-\sigma_\alpha)f(v) = \sum_i \prod_{j_i} x_{i,j_i} - \sum_i \prod_{j_i} \Bigl(x_{i,j_i}- 2\frac{(x_{i,j_i},\alpha)}{(\alpha,\alpha)}\alpha \Bigr).
\]
Since $H^2(BT)$ is the set of weights, as in \cite[Theorem 6.3. (2)]{MT},
$2\frac{(x_{i,j_i},\alpha)}{(\alpha,\alpha)}$ is an integer.
Hence the first term of the right-hand side of \eqref{mult. of alpha} is a multiple of $\alpha$.
Thus the fraction $\frac{1}{\alpha}(f(v)-\sigma_\alpha \cdot f (v))$ is well-defined.

Next we will check that $\delta_\alpha f$ satisfies the GKM condition.
For a positive root $\beta \in \Phi^+$, we have
\begin{align*}
 &\delta_\alpha f(v) - \delta_\alpha f(\sigma_\beta v)\\
=&\frac{1}{\alpha}\Bigl(f(v) - \sigma_\alpha f(\sigma_\alpha v) - f(\sigma_\beta v) + \sigma_\alpha f(\sigma_\alpha \sigma_\beta v)\Bigr)\\
=&\frac{1}{\alpha}\Bigl(f(v) - f(\sigma_\beta v) - \sigma_\alpha \bigl(f(\sigma_\alpha v) - f(\sigma_{\sigma_\alpha \beta} \sigma_\alpha v)\bigr)\Bigr).
\end{align*}
Since $f$ satisfies the GKM condition,
$f(v) - f(\sigma_\beta v)$ is a multiple of $\beta$
and $f(\sigma_\alpha v) - f(\sigma_{\sigma_\alpha \beta} \sigma_\alpha v)$ is a multiple of $\sigma_\alpha \beta$.
Hence $f(v) - \sigma_\alpha f(\sigma_\alpha v) - f(\sigma_\beta v) + \sigma_\alpha f(\sigma_\alpha \sigma_\beta v)$
is a multiple of $\beta$.
It is also a multiple of $\alpha$ by the above well-definedness of $\delta_\alpha f$.
Hence, if $\alpha \neq \beta$, it is a multiple of $\alpha \beta$ since $\alpha$ and $\beta$ are relatively prime.
If $\alpha = \beta$, we have
\begin{equation}
\label{alpha^2}
f(v) - \sigma_\alpha f(\sigma_\alpha v) - f(\sigma_\alpha v) + \sigma_\alpha f(v) = (1+\sigma_\alpha)(f(v)-f(\sigma_\alpha v)).
\end{equation}
Put $f(v) - f(\sigma_\alpha v) = \alpha x$, where $x \in H^*(BT)$.
Then the right hand side of \eqref{alpha^2} is $\alpha x -\alpha \sigma_\alpha x = \alpha (1-\sigma_\alpha)x$.
Therefore it is a multiple of $\alpha^2$
since we have verified that $(1-\sigma_\alpha)x$ is a multiple of $\alpha$ for any $x \in H^*(BT)$.

If $\Phi^+$ is not pairwise relatively prime in $H^*(BT)$,
we can not define the divided difference operator $\delta_\alpha \colon H^*(\G(G_\C/P)) \to H^*(\G(G_\C/P))$
as in the following example.
\begin{ex}
 Set $G = Sp(1) \times Sp(1)$ and fix a maximal torus $T$ of $G$.
 The root system of $G$ is given as $\{ \pm 2t_1, \pm 2t_2 \}$
 where $H^*(BT) = \Z[t_1,t_2]$.
 Then the GKM graph $\G(G_\C/B)$ has only 4 vertices.
 Let $f$ be a GKM function of $\G(G_\C/B)$ defined as follows;
\begin{align*}
 f(w) =
 \begin{cases}
  2 t_1 t_2 & w=e,\\
  0 & \text{otherwise.}
 \end{cases}
\end{align*}
Then we have $\delta_{2t_1}f(e) = t_2$, $\delta_{2t_1}f(\sigma_{2t_2})=0$,
and $\delta_{2t_1}f(e)-\delta_{2t_1}f(\sigma_{2t_2}) \not \in (2t_2)$.
Hence $\delta_{2t_1}f$ is no longer a GKM function.
\end{ex}

\subsection{Equivariant Schubert classes}
\label{Equivariant Schubert classes}
Now we can construct $H^*(BT)$-module basis of $H^*(\G(G_\C/P))$
by virtue of the left divided difference operators.
We need a suitable order on the vertex set $W/W_P$ to construct the basis.

We recall basic properties of Weyl groups.
Let $\Delta$ be the set of all the simple roots of $G_\C$.
The Weyl group $W$ has the \textit{Bruhat order} and \textit{length function} $l$ as follows.
The length function $l \colon W \to \Z_{\geq 0}$
assigns to an element $w \in W$ the least number of factors in the decomposition
\[
w = \sigma_{\alpha_1} \sigma_{\alpha_2} \cdots \sigma_{\alpha_n}, \quad \alpha_i \in \Delta.
\]
A decomposition
$w = \sigma_{\alpha_1} \sigma_{\alpha_2} \cdots \sigma_{\alpha_n}$ ($\alpha_i \in \Delta$)
is called a \textit{reduced decomposition}
when $n=l(w)$.
Let $w_1$, $w_2 \in W$ and $\gamma \in \Phi^+$.
Then $w_1 \arrow w_2$ indicates the fact that
$\sigma_\gamma w_1 = w_2$ and $l(w_2)=l(w_1)+1$.
We put $w < w'$ if there is a chain
\[
w=w_1 \to w_2 \to \cdots \to w_k =w'.
\]
This order on $W$ is called the Bruhat order.


Let $\Phi^-$ denote the set of all the negative roots of $G_\C$.
It is well-known that, for $w \in W$, the length $l(w)$ coincides with
the number of elements of the set $\Phi^+ \cap w(\Phi^-)$
and that for $\alpha \in \Delta$, the reflection $\sigma_\alpha$
sends $\alpha$ to the negative root $-\alpha$ and permutes other positive roots.

\begin{lem}[{\cite[Lemma 2.2]{BGG}}]
\label{BGG lem. 2.2}
Let $w = \sigma_{\alpha_1} \sigma_{\alpha_2} \cdots \sigma_{\alpha_l}$ be a reduced decomposition.
We put $\gamma_i = \sigma_{\alpha_1}\cdots \sigma_{\alpha_{i-1}} \alpha_i$.
Then the roots $\gamma_1, \ldots, \gamma_l$ are distinct and the set $\{\gamma_1, \ldots, \gamma_l\}$
coincides with $\Phi^+ \cap w(\Phi^-)$.
\end{lem}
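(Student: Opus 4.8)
The plan is to argue by induction on the length $l$ of the reduced decomposition. The base case $l=0$ is vacuous, and $l=1$ is the stated fact that a simple reflection $\sigma_{\alpha_1}$ sends $\alpha_1$ to $-\alpha_1$ and permutes the remaining positive roots, so $\Phi^+ \cap \sigma_{\alpha_1}(\Phi^-) = \{\alpha_1\} = \{\gamma_1\}$. For the inductive step, write $w = \sigma_{\alpha_1} w'$ where $w' = \sigma_{\alpha_2} \cdots \sigma_{\alpha_l}$ is a reduced decomposition of length $l-1$, so that the roots $\gamma_i' := \sigma_{\alpha_2}\cdots\sigma_{\alpha_{i-1}}\alpha_i$ for $i = 2, \ldots, l$ are distinct and form $\Phi^+ \cap w'(\Phi^-)$ by the inductive hypothesis. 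Note $\gamma_1 = \alpha_1$ and $\gamma_i = \sigma_{\alpha_1}(\gamma_i')$ for $i \geq 2$.

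First I would establish that $\alpha_1 \notin \Phi^+ \cap w'(\Phi^-)$, equivalently $w'^{-1}(\alpha_1) \in \Phi^+$. This follows because $l(w) = l(\sigma_{\alpha_1}w') = l(w') + 1 > l(w')$ forces $w'^{-1}(\alpha_1)$ to be positive (the standard criterion: $l(\sigma_\alpha w') < l(w')$ iff $w'^{-1}(\alpha) \in \Phi^-$ for simple $\alpha$). Then I would apply $\sigma_{\alpha_1}$ to the set $\Phi^+ \cap w'(\Phi^-)$. Since none of its elements equals $\alpha_1$, and $\sigma_{\alpha_1}$ permutes $\Phi^+ \setminus \{\alpha_1\}$, the image $\sigma_{\alpha_1}(\Phi^+ \cap w'(\Phi^-)) = \{\gamma_2, \ldots, \gamma_l\}$ consists of $l-1$ distinct positive roots, and clearly lies in $\sigma_{\alpha_1}w'(\Phi^-) = w(\Phi^-)$. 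Thus $\{\gamma_2,\ldots,\gamma_l\} \subset \Phi^+ \cap w(\Phi^-)$, and since $\gamma_1 = \alpha_1 \in \Phi^+$ with $w^{-1}(\alpha_1) = w'^{-1}\sigma_{\alpha_1}(\alpha_1) = -w'^{-1}(\alpha_1) \in \Phi^-$, we also get $\gamma_1 \in \Phi^+ \cap w(\Phi^-)$.

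It remains to check that $\gamma_1$ is distinct from $\gamma_2, \ldots, \gamma_l$ and that we have captured the whole set $\Phi^+ \cap w(\Phi^-)$. For distinctness: if $\gamma_1 = \gamma_i$ for some $i \geq 2$, then $\alpha_1 = \sigma_{\alpha_1}(\gamma_i')$, so $\gamma_i' = \sigma_{\alpha_1}(\alpha_1) = -\alpha_1 \in \Phi^-$, contradicting $\gamma_i' \in \Phi^+$. For completeness, I would use the well-known fact, quoted in the excerpt, that $|\Phi^+ \cap w(\Phi^-)| = l(w) = l$; since we have exhibited $l$ distinct elements $\gamma_1, \ldots, \gamma_l$ inside this set, they must exhaust it. The main obstacle is really just the bookkeeping around the length criterion $w'^{-1}(\alpha_1) \in \Phi^+$ and keeping straight which reflection acts on which root; once the "$\alpha_1$ is not in $w'(\Phi^-)$" fact is in hand, the rest is a clean counting argument using the cardinality formula.
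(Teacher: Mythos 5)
Your proof is correct, and it reaches the same three milestones as the paper's own argument (each $\gamma_i$ lies in $\Phi^+\cap w(\Phi^-)$, the $\gamma_i$ are distinct, and the count $|\Phi^+\cap w(\Phi^-)|=l(w)$ forces equality of sets), but it is organized differently: you induct on $l$, peeling off the leftmost factor and transporting $\Phi^+\cap w'(\Phi^-)$ by $\sigma_{\alpha_1}$, whereas the paper argues directly and simultaneously with the suffix products $w_i=\sigma_{\alpha_i}\cdots\sigma_{\alpha_l}$, tracking the signs of $w_j(w_i^{-1}\alpha_i)$ to get distinctness, and leaves the final cardinality step implicit. Your version is arguably cleaner about that last step and isolates exactly where reducedness enters (namely $l(\sigma_{\alpha_1}w')=l(w')+1$, hence $w'^{-1}\alpha_1\in\Phi^+$). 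One caveat: the length criterion you invoke ("$l(\sigma_\alpha w')<l(w')$ iff $w'^{-1}\alpha\in\Phi^-$ for simple $\alpha$") appears in this paper only as Corollary \ref{BGG cor. 2.3}(ii), which is deduced \emph{from} the lemma you are proving, so you should not cite it in that form; instead note that the simple-root case follows directly from the two facts quoted just before the lemma (that $\sigma_\alpha$ permutes $\Phi^+\setminus\{\alpha\}$ and that $l(v)=|\Phi^+\cap v(\Phi^-)|$), by comparing $\Phi^+\cap\sigma_\alpha w'(\Phi^-)$ with $\Phi^+\cap w'(\Phi^-)$. With that justification supplied, your induction is a complete and slightly more self-contained alternative to the paper's direct argument.
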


\begin{proof}
Since $w = \sigma_{\alpha_1} \sigma_{\alpha_2} \cdots \sigma_{\alpha_l}$
is a reduced decomposition,
for any $1 \leq i \leq l$,
$w_i = \sigma_{\alpha_i} \cdots \sigma_{\alpha_l}$
sends exactly $l-i+1$ positive roots to negative roots.
Since $\sigma_{\alpha_i} \alpha_i = -\alpha_i$ is negative,
$\alpha_i \in w_{i+1}(\Phi^+)$
and $\sigma_{\alpha_1} \cdots \sigma_{\alpha_{i-1}}$
sends $-\alpha_i \in w_i(\Phi^+)$ to a negative root.
Hence $\gamma_i = \sigma_{\alpha_1} \cdots \sigma_{\alpha_{i-1}}\alpha_i = w (w_i^{-1} \alpha_i)$
is positive and $w_i^{-1} \alpha_i$ is negative.
Moreover, for $j<i$, $w_j (w_i^{-1} \alpha_i)$ is positive and,
for $j \geq i$, $w_j (w_i^{-1} \alpha_i)$ is negative.
Hence $w_i^{-1} \alpha_i$'s are distinct.
Therefore $\gamma_i$'s are distinct.
\end{proof}

\begin{cor}[{\cite[Corollary 2.3]{BGG}}]
\label{BGG cor. 2.3}
{\rm (i)} Let $w = \sigma_{\alpha_1} \sigma_{\alpha_2} \cdots \sigma_{\alpha_l}$ be a reduced decomposition
and let $\gamma \in \Phi^+$ be a root such that $w^{-1}\gamma \in \Phi^-$.
Then for some $i$
\[
\sigma_\gamma \sigma_{\alpha_1} \cdots \sigma_{\alpha_i} = \sigma_{\alpha_1}  \cdots \sigma_{\alpha_{i-1}}.
\]

{\rm (ii)} Let $w \in W$, $\gamma \in \Phi^+$.
Then $l(w) < l(\sigma_\gamma w)$, if and only if $w^{-1}\gamma \in \Phi^+$.
\end{cor}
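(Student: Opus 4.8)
The plan is to prove both (i) and (ii) from Lemma \ref{BGG lem. 2.2} together with elementary length-counting via the set $\Phi^+ \cap w(\Phi^-)$.

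For (i): since $w^{-1}\gamma \in \Phi^-$, we have $\gamma \in \Phi^+ \cap w(\Phi^-)$, so by Lemma \ref{BGG lem. 2.2} there is an index $i$ with $\gamma = \gamma_i = \sigma_{\alpha_1}\cdots\sigma_{\alpha_{i-1}}\alpha_i$. Writing $u = \sigma_{\alpha_1}\cdots\sigma_{\alpha_{i-1}}$, this says $\gamma = u\alpha_i$, hence $\sigma_\gamma = u\sigma_{\alpha_i}u^{-1}$ (conjugation of reflections by the $W$-action on roots). Therefore
\[
\sigma_\gamma \sigma_{\alpha_1}\cdots\sigma_{\alpha_i}
= u\sigma_{\alpha_i}u^{-1}\cdot u\sigma_{\alpha_i}
= u\sigma_{\alpha_i}\sigma_{\alpha_i}
= u
= \sigma_{\alpha_1}\cdots\sigma_{\alpha_{i-1}},
\]
which is the desired identity. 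The only subtlety is the conjugation formula $\sigma_{u\alpha} = u\sigma_\alpha u^{-1}$, which is standard for Weyl group actions on the root system.

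For (ii): I first prove the ``if'' direction. Suppose $w^{-1}\gamma \in \Phi^+$. I will show $l(\sigma_\gamma w) = l(w) + 1$ by comparing the inversion sets. One computes $\Phi^+ \cap (\sigma_\gamma w)(\Phi^-) = \Phi^+ \cap \sigma_\gamma(w(\Phi^-))$. Since $w^{-1}\gamma \in \Phi^+$, i.e. $-\gamma \notin w(\Phi^-)$, and since $\sigma_\gamma$ permutes $\Phi^+ \setminus \{\gamma\}$ and sends $\gamma \mapsto -\gamma$, a direct bookkeeping argument shows that $\Phi^+ \cap \sigma_\gamma(w(\Phi^-))$ equals $\sigma_\gamma\bigl((\Phi^+ \cap w(\Phi^-)) \cup \{-\gamma\}\bigr)$ intersected appropriately, yielding exactly one more element than $\Phi^+ \cap w(\Phi^-)$; hence $l(\sigma_\gamma w) = l(w)+1 > l(w)$. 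For the ``only if'' direction, suppose $w^{-1}\gamma \in \Phi^-$. Then applying the same reasoning with $w$ replaced by $\sigma_\gamma w$ (note $(\sigma_\gamma w)^{-1}\gamma = w^{-1}\sigma_\gamma\gamma = -w^{-1}\gamma \in \Phi^+$) gives $l(w) = l(\sigma_\gamma(\sigma_\gamma w)) = l(\sigma_\gamma w)+1 > l(\sigma_\gamma w)$, so $l(w) > l(\sigma_\gamma w)$, hence $l(w) \not< l(\sigma_\gamma w)$. Combining the two directions gives the equivalence.

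The main obstacle is the careful counting in the ``if'' direction of (ii): one must track how the reflection $\sigma_\gamma$ interacts with the set $w(\Phi^-)$, in particular verifying that the single element $-\gamma$ is the unique new inversion created (and that no inversions are destroyed) precisely when $w^{-1}\gamma$ is positive. This is the standard but slightly delicate fact that $\ell(\sigma_\gamma w) = \ell(w) \pm 1$ with the sign governed by $\operatorname{sgn}(w^{-1}\gamma)$; all other steps are formal manipulations with reflections and the identity from Lemma \ref{BGG lem. 2.2}.
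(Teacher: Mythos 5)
Your part (i) is correct and is exactly the paper's argument: Lemma \ref{BGG lem. 2.2} gives $\gamma = \sigma_{\alpha_1}\cdots\sigma_{\alpha_{i-1}}\alpha_i$, and the conjugation formula $\sigma_{u\alpha}=u\sigma_\alpha u^{-1}$ finishes it.

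Part (ii), however, has a genuine gap in the ``if'' direction. Your bookkeeping rests on two claims that are false for a non-simple positive root $\gamma$: that $\sigma_\gamma$ permutes $\Phi^+\setminus\{\gamma\}$, and that consequently $l(\sigma_\gamma w)=l(w)+1$ when $w^{-1}\gamma\in\Phi^+$. The first property is special to simple roots; for a general reflection $\sigma_\gamma$ many positive roots can change sign, and the length can jump by any odd amount. Concretely, in type $A_2$ take $\gamma=\alpha_1+\alpha_2$ and $w=e$: then $w^{-1}\gamma\in\Phi^+$, but $\sigma_\gamma(\alpha_1)=-\alpha_2$ and $\sigma_\gamma(\alpha_2)=-\alpha_1$, so $l(\sigma_\gamma w)=3=l(w)+3$, and $\sigma_\gamma$ does not permute $\Phi^+\setminus\{\gamma\}$. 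So the ``exactly one new inversion'' count, which you yourself single out as the main obstacle, is not a standard fact but simply wrong as stated, and the ``if'' direction is not established; the ``only if'' direction, which you deduce from it by the swap $w\leftrightarrow\sigma_\gamma w$, therefore also collapses.

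The repair is short and is exactly the paper's route: you never need the refined length count, only the inequality, and it follows from the part (i) you already proved. If $w^{-1}\gamma\in\Phi^-$, then (i) lets you cancel $\sigma_\gamma$ against the reduced word, giving $\sigma_\gamma w=\sigma_{\alpha_1}\cdots\sigma_{\alpha_{i-1}}\sigma_{\alpha_{i+1}}\cdots\sigma_{\alpha_l}$ and hence $l(\sigma_\gamma w)<l(w)$. Then apply your swap trick: if $w^{-1}\gamma\in\Phi^+$, then $(\sigma_\gamma w)^{-1}\gamma\in\Phi^-$, so the previous sentence applied to $\sigma_\gamma w$ gives $l(w)<l(\sigma_\gamma w)$. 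Together these two implications are the desired equivalence, with no inversion-set counting needed.
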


\begin{proof}
{\rm (i)}
From Lemma \ref{BGG lem. 2.2} we deduce that
$\gamma = \sigma_{\alpha_1} \cdots \sigma_{\alpha_{i-1}}\alpha_i$ for some $i$,
and then the reflection $\sigma_\gamma$ is the conjugation of
$\sigma_{\alpha_i}$ by $\sigma_{\alpha_{i-1}} \cdots \sigma_{\alpha_1}$.

{\rm (ii)}
If $w^{-1}\gamma \in \Phi^-$,
then by {\rm (i)} $\sigma_\gamma w = \sigma_{\alpha_1} \cdots \sigma_{\alpha_{i-1}}\sigma_{\alpha_{i+1}} \cdots \sigma_{\alpha_l}$,
that is $l(\sigma_\gamma w) < l(w)$.
Interchanging $w$ and $\sigma_\gamma w$,
we see that if $w^{-1}\gamma \in \Phi^+$,
then $l(w) < l(\sigma_\gamma w)$.
\end{proof}

\begin{lem}[{\cite[Lemma 2.4]{BGG}}]
\label{lem. wedge}
Let $w_1$, $w_2 \in W$, $\alpha \in \Delta$, $\gamma \in \Phi^+$, and $\gamma \neq \alpha$.
Let $\gamma' = \sigma_\alpha \gamma \in \Phi^+$.
The diagrams
\[
  \begin{xy}
  	( 0, 0)="saw1"*{\sigma_\alpha w_1},
  	(24, 6)="w2"*{w_2},
  	(24,-6)="w1"*{w_1},
  	(35, 0)=""*{\text{and}},
  	(70, 0)="saw2"*{\sigma_\alpha w_2},
  	(46, 6)="2w2"*{w_2},
  	(46,-6)="2w1"*{w_1},
      \ar^{\gamma}"saw1" +/u1mm/+/r4.5mm/;	"w2"+/d1mm/+/l3mm/,
      \ar^{\alpha}"saw1" +/d1mm/+/r4.5mm/;	"w1"+/u1mm/+/l3mm/,
      \ar^{\alpha}"2w2"+/d1mm/+/r3mm/;	"saw2" +/u1mm/+/l4.5mm/,
      \ar^{\gamma'}"2w1"+/u1mm/+/r3mm/;	"saw2" +/d1mm/+/l4.5mm/
  \end{xy}
\]
are equivalent.
\end{lem}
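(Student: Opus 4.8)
The plan is to translate each diagram into the explicit data it encodes and then show that the two resulting packages of conditions are equivalent, using Corollary~\ref{BGG cor. 2.3}(ii) together with the standard fact that left multiplication by a simple reflection changes length by exactly $\pm 1$. Unwinding the notation, the left diagram asserts $\sigma_\gamma\sigma_\alpha w_1 = w_2$, $l(w_1)=l(\sigma_\alpha w_1)+1$, and $l(w_2)=l(\sigma_\alpha w_1)+1$, while the right diagram asserts $\sigma_{\gamma'}w_1 = \sigma_\alpha w_2$, $l(\sigma_\alpha w_2)=l(w_2)+1$, and $l(\sigma_\alpha w_2)=l(w_1)+1$. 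First I would handle the algebraic content: since $\gamma'=\sigma_\alpha\gamma$ we have $\sigma_{\gamma'}=\sigma_\alpha\sigma_\gamma\sigma_\alpha$, so $\sigma_{\gamma'}w_1=\sigma_\alpha(\sigma_\gamma\sigma_\alpha w_1)$, and hence $\sigma_\gamma\sigma_\alpha w_1=w_2$ holds if and only if $\sigma_{\gamma'}w_1=\sigma_\alpha w_2$. I would also note that $\gamma'\in\Phi^+$ (because $\gamma\in\Phi^+$, $\gamma\neq\alpha$, and $\alpha$ is simple), so that Corollary~\ref{BGG cor. 2.3}(ii) applies to $\gamma'$ as well, and record the identity $w_1^{-1}\gamma'=(\sigma_\alpha w_1)^{-1}\gamma$, immediate from $\gamma'=\sigma_\alpha\gamma$ and $(\sigma_\alpha w_1)^{-1}=w_1^{-1}\sigma_\alpha$. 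After this, only the length conditions remain to be matched.

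Next I would run the two implications, which are symmetric. Assume the left diagram. Then $l(w_1)=l(w_2)=l(\sigma_\alpha w_1)+1$, so $l(\sigma_\alpha w_1)<l(w_2)=l(\sigma_\gamma(\sigma_\alpha w_1))$; by Corollary~\ref{BGG cor. 2.3}(ii) this gives $(\sigma_\alpha w_1)^{-1}\gamma\in\Phi^+$, hence $w_1^{-1}\gamma'\in\Phi^+$, hence (Corollary~\ref{BGG cor. 2.3}(ii) again) $l(w_1)<l(\sigma_{\gamma'}w_1)=l(\sigma_\alpha w_2)$. Since $\alpha$ is simple, $l(\sigma_\alpha w_2)$ differs from $l(w_2)=l(w_1)$ by exactly $1$, so in fact $l(\sigma_\alpha w_2)=l(w_1)+1=l(w_2)+1$, which is precisely what the right diagram demands. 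Conversely, assume the right diagram. Then $l(w_1)=l(w_2)$ and $l(w_1)<l(\sigma_\alpha w_2)=l(\sigma_{\gamma'}w_1)$, so $w_1^{-1}\gamma'\in\Phi^+$, hence $(\sigma_\alpha w_1)^{-1}\gamma\in\Phi^+$, hence $l(\sigma_\alpha w_1)<l(\sigma_\gamma(\sigma_\alpha w_1))=l(w_2)=l(w_1)$; since $\alpha$ is simple this forces $l(\sigma_\alpha w_1)=l(w_1)-1$, i.e.\ $l(w_1)=l(\sigma_\alpha w_1)+1$, and then $l(w_2)=l(w_1)=l(\sigma_\alpha w_1)+1$ as well, which is what the left diagram demands.

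The one place I expect the argument to require care is the passage from a strict length inequality to a length \emph{difference of exactly} $1$: Corollary~\ref{BGG cor. 2.3}(ii) only pins down the direction in which $l$ changes under $\sigma_{\gamma'}$, and because $\gamma'$ need not be a simple root, that change could a priori be larger than $1$. The remedy, used above, is to play this off against the simple-reflection ($\alpha$-labelled) edge, whose two endpoints have lengths differing by exactly $1$, after first observing that both diagrams force $l(w_1)=l(w_2)$. Apart from this point, the proof is formal bookkeeping with the conjugation formula $\sigma_{w\gamma}=w\sigma_\gamma w^{-1}$ and the fact that $\sigma_\alpha$ permutes $\Phi^+\setminus\{\alpha\}$.
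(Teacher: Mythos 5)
Your argument is correct and takes essentially the same route as the paper's proof: both reduce the claim to the conjugation identity $\sigma_{\gamma'}w_1=\sigma_\alpha\sigma_\gamma\sigma_\alpha w_1=\sigma_\alpha w_2$ together with one application of Corollary \ref{BGG cor. 2.3} (ii) and the fact that a simple reflection changes length by exactly one. The only differences are cosmetic bookkeeping (you test the sign of $w_1^{-1}\gamma'$ where the paper tests $(\sigma_\alpha w_2)^{-1}\gamma'$) and that you write out the converse and the ``exactly $\pm 1$'' step which the paper leaves implicit.
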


\begin{proof}
Since $\alpha \in \Delta$ and $\gamma \neq \alpha$,
we have $\gamma' = \sigma_\alpha \gamma \in \Phi^+$.
Therefore it is sufficient to show that
$l(\sigma_\alpha w_2) > l(w_2)= l(w_1)$.
This follows from Corollary \ref{BGG cor. 2.3} {\rm (ii)},
because $\sigma_\alpha w_2 = \sigma_\alpha \sigma_\gamma \sigma_\alpha^{-1} w_1 = \sigma_{\gamma'} w_1$
and $(\sigma_\alpha w_2)^{-1}\gamma' = w_2^{-1} \sigma_\alpha\gamma' = w_2^{-1}\gamma \in \Phi^-$.
The inverse implication is proved similarly.
\end{proof}

\begin{prop}[{\cite[Propositin 2.8 a)]{BGG}}]
\label{BGG prop. 2.8}
Let $w \in W$ and $w = \sigma_{\alpha_1} \sigma_{\alpha_2} \cdots \sigma_{\alpha_l}$ be a reduced decomposition.
If $1 \leq i_1 < i_2 < \cdots < i_k \leq l$
and $w' = \sigma_{\alpha_{i_1}} \cdots \sigma_{\alpha_{i_k}}$,
then $w' \leq w$.
\end{prop}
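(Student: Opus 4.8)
The plan is to induct on the length $l = l(w)$, stripping off the \emph{first} simple reflection of the given reduced decomposition. Write $w = \sigma_{\alpha_1} v$ with $v = \sigma_{\alpha_2} \cdots \sigma_{\alpha_l}$; since $w$ is reduced this is a reduced decomposition of $v$, and $l(v) = l-1$ and $v \arrow[\alpha_1] w$, so $v \leq w$. If the subword does not use the first letter (that is, $i_1 > 1$, or $k = 0$), then $w' = \sigma_{\alpha_{i_1}} \cdots \sigma_{\alpha_{i_k}}$ is a subword of the reduced decomposition of $v$, whence $w' \leq v$ by the inductive hypothesis and $w' \leq w$ by concatenating chains. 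The interesting case is $i_1 = 1$, where $w' = \sigma_{\alpha_1} v'$ with $v' = \sigma_{\alpha_{i_2}} \cdots \sigma_{\alpha_{i_k}}$ a subword of $v$; the inductive hypothesis gives $v' \leq v$, and we must promote this to $\sigma_{\alpha_1} v' \leq \sigma_{\alpha_1} v = w$.

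This promotion is the crux of the argument. I will establish a \emph{lifting property}: for $\alpha \in \Delta$ and $x \leq y$ with $\sigma_\alpha y > y$ one has $\sigma_\alpha x \leq \sigma_\alpha y$; to make the induction close it is convenient to prove the slightly stronger statement that $x \leq y$ implies $\max\{x, \sigma_\alpha x\} \leq \max\{y, \sigma_\alpha y\}$, the maximum taken with respect to length. Granting this, the case $i_1 = 1$ follows at once: if $\sigma_{\alpha_1} v' < v'$ then $\sigma_{\alpha_1} v' < v' \leq v < w$, and otherwise $\sigma_{\alpha_1} v' = \max\{v', \sigma_{\alpha_1} v'\} \leq \max\{v, \sigma_{\alpha_1} v\} = \sigma_{\alpha_1} v = w$.

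The lifting property itself I prove by induction on $l(y)$. The case $x = y$ is trivial; for $x < y$ pick $z$ with $x \leq z \lessdot y$ (the penultimate term of a chain from $x$ to $y$), so that by induction $\max\{x, \sigma_\alpha x\} \leq \max\{z, \sigma_\alpha z\}$ and it remains to compare the maxima across a single cover $z \arrow[\gamma] y$. If $\gamma = \alpha$ then $\sigma_\alpha y = z < y$ and both maxima equal $y$. If $\gamma \neq \alpha$, set $\gamma' = \sigma_\alpha \gamma \in \Phi^+$: when $\sigma_\alpha z < z$ and $\sigma_\alpha y < y$ the two maxima are $z$ and $y$, so $z \lessdot y$ suffices; when $\sigma_\alpha z > z$ and $\sigma_\alpha y > y$ a length count gives $\sigma_\alpha z \arrow[\gamma'] \sigma_\alpha y$, hence $\max = \sigma_\alpha z \leq \sigma_\alpha y = \max$; when $\sigma_\alpha z < z$ and $\sigma_\alpha y > y$ the needed inequality $z \leq \sigma_\alpha y$ comes from the chain $z \arrow[\gamma] y \arrow[\alpha] \sigma_\alpha y$. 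The remaining configuration $\sigma_\alpha z > z$, $\sigma_\alpha y < y$ is exactly the one excluded by Lemma~\ref{lem. wedge}: the arrows $z \arrow[\alpha] \sigma_\alpha z$ and $z \arrow[\gamma] y$ constitute its first diagram (take $w_1 = \sigma_\alpha z$, $w_2 = y$), which forces its second diagram, in particular $y \arrow[\alpha] \sigma_\alpha y$, contradicting $\sigma_\alpha y < y$; hence this case does not occur. This finishes the cover analysis, the lifting property, and the proposition. I expect the delicate point to be precisely this cover-by-cover bookkeeping — tracking which of $\sigma_\alpha z$, $\sigma_\alpha y$ is longer and checking $\gamma' \in \Phi^+$ — with Lemma~\ref{lem. wedge} supplying the one genuinely non-formal ingredient, namely the exclusion of the bad square.
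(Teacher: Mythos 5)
Your proof is correct and takes essentially the same route as the paper: the same induction on $l$ with the two cases $i_1>1$ and $i_1=1$, and Lemma \ref{lem. wedge} as the one non-formal ingredient in the second case. Your lifting property $\max\{x,\sigma_\alpha x\}\leq\max\{y,\sigma_\alpha y\}$ is just a careful packaging of the paper's ``apply Lemma \ref{lem. wedge} repeatedly along the chain'' step, with the added merit of explicitly covering the sub-case $\sigma_{\alpha_1}v'<v'$ that the paper's chain notation passes over silently.
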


\begin{proof}
We prove this by induction on $l$.
We treat two cases separately.

{\rm (i)} $i_1>1$.
By the inductive hypothesis $w' \leq \sigma_{\alpha_2} \cdots \sigma_{\alpha_l} < w$.

{\rm (ii)} $i_1=1$.
By the inductive hypothesis
$\sigma_{\alpha_1} w' \leq \sigma_{\alpha_2} \cdots \sigma_{\alpha_l} < \sigma_{\alpha_1}w$.
Applying Lemma \ref{lem. wedge} to the chain
$w' \overset{\alpha_1}{\leftarrow} \sigma_{\alpha_1} w' \to \cdots \to \sigma_{\alpha_1}w$ repeatedly,
we can see that $w' < w$
\end{proof}

For $w \in W$ and  $\gamma \in \Phi^+$,
combining Proposition \ref{BGG prop. 2.8} and Corollary \ref{BGG cor. 2.3} (i),
we obtain that $\gamma \in \Phi^+ \cap w(\Phi^-)$ implies $\sigma_\gamma w < w$
and that $\gamma \not \in \Phi^+ \cap w(\Phi^-)$ ($\Leftrightarrow \gamma \in \Phi^+ \cap \sigma_\gamma w(\Phi^-)$) implies $\sigma_\gamma w > w$.

Let $P$ be the parabolic subgroup of $G_\C$ corresponding to a subset $\Sigma \subset \Delta$
and $\langle \Sigma \rangle$ denote the set of positive roots
that can be written as linear combinations of roots in $\Sigma$.
The Bruhat order on $W$ induces an relation
 on $W/W_P$ as follows;
$vW_P \leq v'W_P$ if and only if there are coset representatives $v_0$ and $v_0'$
of $vW_P$ and $v'W_P$, respectively, such that $v_0 \leq v_0'$.
We will show that this relation is a partial order 
and call this order the Bruhat order on $W/W_P$.
Lemma \ref{lem. wedge} helps us to analyze this relation.

We define the length function $l^P$ on $W/W_P$ as
$l^P(wW_P) = \mathrm{min}\{ l(v) \suchthat v \in wW_P \}$.
If $v \in wW_P$ satisfies $l(v)=l^P(wW_P)$,
we call $v$ a \textit{minimal length representative} of $wW_P$,
and let $\bar{w}$ denote a minimal length representative of $wW_P$.
\begin{lem}
\label{minimal length}
For any $w \in W$, a minimal length representative $\bar{w}$ of $wW_P$ is unique,
and, for $w= \bar{w} \sigma \in wW_P$,
we have $l(w) = l(\bar{w}) + l(\sigma)$.
\end{lem}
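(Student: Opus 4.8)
The plan is to establish two facts about cosets $wW_P$: (a) any two reduced words in $wW_P$ of minimal length actually coincide as group elements, so that $\bar w$ is unique; and (b) a minimal length representative is characterized by a root-theoretic condition, from which the length additivity $l(w)=l(\bar w)+l(\sigma)$ follows by counting inversions. Recall that $W_P$ is the Weyl group generated by the simple reflections $\sigma_\alpha$ for $\alpha \in \Sigma$, and that $\langle \Sigma\rangle$ is its positive root system; the key invariant is the inversion set $\Phi^+\cap w(\Phi^-)$, whose cardinality is $l(w)$ (stated in the excerpt just before Lemma \ref{BGG lem. 2.2}).

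First I would prove the characterization: for $w\in W$, the following are equivalent: (i) $l(w)\le l(w\sigma_\alpha)$ for every $\alpha\in\Sigma$; (ii) $w(\alpha)\in\Phi^+$ for every $\alpha\in\Sigma$; (iii) $\Phi^+\cap w(\Phi^-)\subset \Phi^+\setminus\langle\Sigma\rangle$, equivalently $w(\langle\Sigma\rangle)\subset\Phi^+$. The equivalence (i)$\Leftrightarrow$(ii) is the right-handed version of Corollary \ref{BGG cor. 2.3}(ii) applied to $w$ and $\sigma_\alpha$ (note $l(w)\le l(w\sigma_\alpha)$ iff $w(\alpha)\in\Phi^+$, since $w\sigma_\alpha = \sigma_{w(\alpha)}w$, or one can cite it directly by passing to inverses). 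For (ii)$\Rightarrow$(iii): if $w$ sends every simple root of $\langle\Sigma\rangle$ to a positive root, then since every root in $\langle\Sigma\rangle$ is a $W_P$-image of such a simple root and $w(\beta)$ for $\beta\in\langle\Sigma\rangle$ can be analyzed by induction on the height of $\beta$ — writing $\beta = \sigma_\gamma(\beta')$ cleanly via the standard fact that a non-simple positive root $\beta$ in $\langle\Sigma\rangle$ admits $\alpha\in\Sigma$ with $\sigma_\alpha(\beta)\in\langle\Sigma\rangle$ of smaller height — we get $w(\langle\Sigma\rangle)\subset\Phi^+$; (iii)$\Rightarrow$(ii) is trivial. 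Then I would show such $w$ is exactly the minimal length element of $wW_P$: given any $w'=w\tau$ with $\tau\in W_P$, factor $\tau$ into simple reflections of $\Sigma$ and repeatedly apply the criterion $l(w\sigma_\alpha)=l(w)+1$ when $w(\alpha)>0$ to conclude $l(w')=l(w)+l(\tau)\ge l(w)$, with equality iff $\tau=e$; this simultaneously gives uniqueness (two minimal representatives would differ by $\tau$ with $l(\tau)=0$, so $\tau=e$) and, upon renaming, the additivity formula $l(w)=l(\bar w)+l(\sigma)$.

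The one genuinely delicate point — and I expect it to be the main obstacle — is the inductive step $w(\langle\Sigma\rangle)\subset\Phi^+$ when $w$ only sends the \emph{simple} roots of $\Sigma$ to positive roots: one must be careful that reflecting a positive root $\beta\in\langle\Sigma\rangle$ by a simple $\sigma_\alpha$ ($\alpha\in\Sigma$) either fixes nothing relevant or decreases $\Sigma$-height while staying inside $\langle\Sigma\rangle$, and that $w\sigma_\alpha$ then inherits hypothesis (ii) — this uses that $\sigma_\alpha$ permutes $\langle\Sigma\rangle\setminus\{\alpha\}$ and that the criterion propagates: if $w(\alpha)>0$ then $w\sigma_\alpha$ still sends each $\alpha'\in\Sigma$ to $w(\sigma_\alpha\alpha')$, a positive root (as $\sigma_\alpha\alpha'\in\langle\Sigma\rangle$, handled by the inductive control on heights). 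A clean alternative that avoids some of this bookkeeping: prove additivity first by the inversion-set count. For $w=\bar w\sigma$ with $\bar w$ satisfying (iii) and $\sigma\in W_P$, one has the disjoint decomposition $\Phi^+\cap w(\Phi^-) = \big(\Phi^+\cap \bar w(\langle\Sigma\rangle\cap\sigma(\Phi^-))\big)\sqcup\big(\Phi^+\cap \bar w(\Phi^-\setminus\langle\Sigma\rangle)\big)$; the second set equals $\Phi^+\cap\bar w(\Phi^-)$ because $\bar w$ preserves positivity on $\langle\Sigma\rangle$, hence on its complement among the negatives too — sorting this sign bookkeeping out is the crux — and the first set is mapped bijectively by $\bar w$ to $\langle\Sigma\rangle\cap\sigma(\Phi^-)$ (again using $\bar w\langle\Sigma\rangle\subset\Phi^+$), which has cardinality $l(\sigma)$. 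Taking cardinalities yields $l(w)=l(\bar w)+l(\sigma)$, and uniqueness of $\bar w$ drops out since the additivity shows any representative of the form $\bar w\sigma$ has length $>l(\bar w)$ unless $\sigma=e$, while every element of $wW_P$ has this form for the particular $\bar w$ satisfying (iii), whose existence is guaranteed by taking an actual minimizer and checking it satisfies (i), hence (iii).
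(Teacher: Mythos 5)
Your argument is correct, but it takes a genuinely different route from the paper's. The paper stays entirely inside the Bruhat-chain formalism: it writes an arbitrary $v=\bar{w}\sigma\in wW_P$ as a ``mountain range'' of chains starting at $\bar{w}^{-1}$, repeatedly applies Lemma \ref{lem. wedge} to lower the summits (deleting trivial wedges), and ends with an increasing chain from $\bar{w}^{-1}$ to $v^{-1}$ all of whose arrows carry simple roots in $\Sigma$; uniqueness and $l(v)=l(\bar{w})+l(\sigma)$ are read off from that chain, and the same surgery is reused verbatim to prove Lemma \ref{D lem. 3.5}. You instead use the classical descent/inversion-set characterization: a length minimizer satisfies $\bar{w}(\alpha)\in\Phi^+$ for all $\alpha\in\Sigma$ (the right-handed form of Corollary \ref{BGG cor. 2.3}(ii)), hence $\bar{w}(\langle\Sigma\rangle)\subset\Phi^+$, and then the inversion set $\Phi^+\cap w(\Phi^-)$ decomposes as the disjoint union of $\bar{w}\bigl(\langle\Sigma\rangle\cap\sigma(\Phi^-)\bigr)$, of cardinality $l(\sigma)$, and $\Phi^+\cap\bar{w}(\Phi^-)$, of cardinality $l(\bar{w})$; additivity follows by counting and uniqueness is an immediate corollary. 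Both are sound: your route is the standard one (Humphreys, Deodhar, Bj\"orner--Brenti), arguably more transparent and quantitative since it exhibits the inversion set of $w$ explicitly, while the paper's chain surgery buys exactly the statement it needs later, namely the existence of increasing chains within a coset labelled by roots of $\Sigma$, which is recycled in Lemma \ref{D lem. 3.5} and in the proof of Proposition \ref{Sc. exists}. One remark: the step you flag as the delicate one, (ii)$\Rightarrow$(iii), is actually immediate by linearity --- any $\beta\in\langle\Sigma\rangle$ is a nonnegative combination of roots of $\Sigma$, so $\bar{w}(\beta)$ is a nonnegative combination of positive roots and, being a root, is positive --- so no height induction or propagation of the hypothesis to $\bar{w}\sigma_\alpha$ is needed, and your ``clean alternative'' via inversion counting then goes through without any further bookkeeping.
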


\begin{proof}
For any $v \in wW_P$,
there is $\sigma \in W_P$ such that $\bar{w}\sigma = v$,
and let $\sigma = \sigma_{\alpha_1} \cdots \sigma_{\alpha_n}$ be a reduced decomposition
($\alpha_i \in \Sigma$).
We can draw the following picture of chains which looks like a mountain range.
\[
\begin{xy}
	( -5, 0)="left",
	(100, 0)="right",
  	(  0,-10)="down",
  	(  0, 50)="up",
  	(-10, 40)=""*{\text{length}},
  	(-18,  0)=""*{l(\bar{w}^{-1})\: (=l(\bar{w}))},
	( 10,  0)="w0"*{\bullet},
      "w0"+(5,8)="w1"*{\bullet},
      "w1"+(5,8)="w2"*{\bullet},
      "w2"+(5,8)="w3"*{\bullet},
      "w3"+(5,-8)="w4"*{\bullet},
      "w4"+(5,8)="w5"*{\bullet},
      "w5"+(5,8)="w6"*{\bullet},
      "w6"+(5,-8)="w7"*{\bullet},
      "w7"+(5,-8)="w8"*{\bullet},
      "w8"+(5,-8)="w9"*{\bullet},
      "w9"+(5,-8)="w10"*{\bullet},
      "w10"+(5,8)="w11"*{\bullet},
      "w11"+(5,8)="w12"*{\bullet},
      "w12"+(5,-8)="w13"*{\bullet},
      "w13"+(5,8)="w14"*{\bullet},
      "w14"+(5,8)="w15"*{\bullet},
	( 10, -5)=""*{\bar{w}^{-1}},
	( 90, 28)=""*{v^{-1}},
	( 10, 6)=""*{\alpha_1},
	( 86, 18)=""*{\alpha_n},
      \ar@{.}"left";	"right",
      \ar"down";	"up",
      \ar"w0";"w1",
      \ar"w1";"w2",
      \ar"w2";"w3",
      \ar"w4";"w3",
      \ar"w4";"w5",
      \ar"w5";"w6",
      \ar"w7";"w6",
      \ar"w8";"w7",
      \ar"w9";"w8",
      \ar"w10";"w9",
      \ar"w10";"w11",
      \ar"w11";"w12",
      \ar"w13";"w12",
      \ar"w13";"w14",
      \ar"w14";"w15"
\end{xy}
\]
The $i$-th node denotes the inverse of $\bar{w}\sigma_{\alpha_1} \cdots \sigma_{\alpha_{i-1}} \in wW_P$
and the $i$-th arrow is equipped with $\alpha_i$.
By definition, for any $v \in W$, we have $l(v) = l(v^{-1})$.
Hence, for any $v \in wW_P$, we have $l(\bar{w}^{-1}) \leq l(v^{-1})$.
Applying Lemma \ref{lem. wedge} to the summits,
which are locally highest nodes except the right end node $v^{-1}$,
we can obtain a new lower mountain range
such that all right-up arrows are equipped with simple roots in $\Sigma$.
Moreover, if the new mountain range has a summit $w_2$
of the form $w_1 \arrow[\alpha] w_2 \overset{\alpha}{\leftarrow} w_1$,
then we remove this to get the node $w_1$.
Any node of the new mountain range is also of length at least $l(\bar{w}) = l(\bar{w}^{-1})$.
Finally, by iteration of this modification,
we obtain an upward chain $\bar{w}^{-1} \to \cdots \to v^{-1}$ such that all arrows are equipped with simple roots in $\Sigma$.
Therefore a minimal length representative of $wW_P$ is unique.
\end{proof}

Lemma \ref{minimal length} says
that, for $w \in W$, the minimal length representative $\bar{w}$ of $wW_P$
is minimum in $wW_P$ as a subposet of $W$.
Moreover the left multiplication $\bar{w} \colon W_P \to \bar{w}W_P$
is a poset isomorphism,
since, for $\gamma \in \langle \Sigma \rangle$, $v_1 \arrow v_2$ if and only if $\bar{w}v_1 \arrow[\bar{w}\gamma] \bar{w}v_2$.
The following lemma is proved by the same argument as in the proof of Lemma \ref{minimal length}
and shows that the relation on $W/W_P$, which we have defined above, is actually a partial order.

\begin{lem}[{\cite[Lemma 3.5]{D}}]
\label{D lem. 3.5}
Let $w_1$, $w_2 \in W$ and $\bar{w}_1$, $\bar{w}_2$ the minimal length representatives
of $w_1W_P$, $w_2W_P$, respectively.
Then $\bar{w}_1 \leq \bar{w}_2$ if and only if $\bar{w}_1 \leq w_2$.
\end{lem}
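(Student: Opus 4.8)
The plan is to mimic the mountain-range argument of Lemma \ref{minimal length}, but now keeping track of a reduced decomposition of $w_2$ rather than an element of $W_P$. The nontrivial direction is the ``only if'' part—given $\bar{w}_1 \leq \bar{w}_2$ we must produce a chain from $\bar{w}_1$ to $w_2$—but in fact it is even easier: since $\bar{w}_2$ is the minimal length representative of $w_2 W_P$, Lemma \ref{minimal length} gives $\bar{w}_2 \leq w_2$, and combining this with $\bar{w}_1 \leq \bar{w}_2$ and transitivity of the Bruhat order yields $\bar{w}_1 \leq w_2$ immediately. So the real content is the converse: assume $\bar{w}_1 \leq w_2$ and deduce $\bar{w}_1 \leq \bar{w}_2$.

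For the converse, first choose a chain $\bar{w}_1 = u_0 \to u_1 \to \cdots \to u_k = w_2$ witnessing $\bar{w}_1 \leq w_2$, and fix a reduced decomposition $w_2 = \sigma_{\beta_1} \cdots \sigma_{\beta_m}$ realizing the factorization $w_2 = \bar{w}_2 \sigma$ with $\sigma \in W_P$ and $l(w_2) = l(\bar{w}_2) + l(\sigma)$ as in Lemma \ref{minimal length}; thus the last $l(\sigma)$ simple reflections lie in $\Sigma$. Passing to inverses (using $l(v) = l(v^{-1})$ and that $\gamma \in \Phi^+ \cap v(\Phi^-)$ translates appropriately), I would set up the mountain range whose right end is $w_2^{-1}$ and whose descending right portion is governed by the reduced word for $\sigma^{-1}$, hence by simple roots in $\Sigma$. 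The chain from $\bar{w}_1^{-1}$ up to $w_2^{-1}$ then appears as a path in this picture, and I apply Lemma \ref{lem. wedge} repeatedly at the summits to slide the edges labelled by roots in $\langle \Sigma \rangle$ toward the right end, deleting any spurious ``peak'' of the form $v' \xrightarrow{\alpha} v'' \overset{\alpha}{\leftarrow} v'$ that is created. Because every node of the mountain range has length at least $l(\bar{w}_1) = l(\bar{w}_1^{-1})$ and the modifications only lower the range, the process terminates with an upward chain from $\bar{w}_1^{-1}$ to $(\bar{w}_2 \sigma)^{-1}$ in which the final stretch uses only simple roots in $\Sigma$; truncating that final stretch exhibits $\bar{w}_2^{-1}$ on the chain, so $\bar{w}_1^{-1} \leq \bar{w}_2^{-1}$ and hence $\bar{w}_1 \leq \bar{w}_2$.

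The main obstacle is bookkeeping rather than conceptual: one must check that Lemma \ref{lem. wedge} can genuinely be applied at each summit—i.e.\ that the relevant simple root $\alpha$ and positive root $\gamma$ satisfy the hypotheses $\gamma \neq \alpha$ and $\gamma' = \sigma_\alpha \gamma \in \Phi^+$—and that the iteration strictly decreases a suitable complexity measure (for instance, the total ``area under the mountain range'', or the sum of lengths of the nodes) so that it halts. I would also take care that the edge labels appearing after the slides that are supposed to lie in $\langle \Sigma \rangle$ really do, which follows because $\sigma \in W_P$ and $W_P$ is generated by $\{\sigma_\alpha : \alpha \in \Sigma\}$, so its Bruhat chains use only roots in $\langle \Sigma \rangle$, a fact already invoked implicitly in the discussion preceding this lemma. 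Once these routine points are verified, the argument is a direct transcription of the proof of Lemma \ref{minimal length}, which is why the paper can (and, I expect, does) dispatch it in a sentence.
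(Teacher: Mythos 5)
Your overall strategy is the paper's own: reduce to the direction $\bar{w}_1 \leq w_2 \Rightarrow \bar{w}_1 \leq \bar{w}_2$, concatenate an ascending chain $\bar{w}_1^{-1} \to \cdots \to w_2^{-1}$ with the descending stretch $w_2^{-1} \to \cdots \to \bar{w}_2^{-1}$ coming from $w_2=\bar{w}_2\sigma$ (whose edges, after inverting, are left multiplications by simple reflections in $\Sigma$), and smooth the resulting mountain range with Lemma \ref{lem. wedge}, exactly as in Lemma \ref{minimal length}. But there is a genuine gap at the one point where this lemma differs from Lemma \ref{minimal length}: your termination/protection invariant ``every node of the mountain range has length at least $l(\bar{w}_1)$'' is not available here. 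In Lemma \ref{minimal length} it holds because all nodes stay inside a single coset of which $\bar{w}$ is the shortest element; in the present situation the ascending part consists of arbitrary elements below $w_2$ and the tail consists of elements of $w_2W_P$, so the invariant already fails at the start unless $l(\bar{w}_1)\leq l(\bar{w}_2)$ --- which is essentially the statement being proved. Termination itself is harmless (lengths are bounded below by $0$), but you are implicitly using this invariant to guarantee that $\bar{w}_1^{-1}$ stays the left end of the chain, i.e.\ never becomes a summit that gets lowered away. The correct substitute is where the minimality hypotheses actually enter: each application of the wedge lemma at a summit propagates the same simple root $\beta\in\Sigma$ leftward as the label of the new right-descending edge, so if the propagation ever reached the leftmost edge without a cancellation, Lemma \ref{lem. wedge} would force $\sigma_\beta\bar{w}_1^{-1}<\bar{w}_1^{-1}$, i.e.\ $l(\bar{w}_1\sigma_\beta)<l(\bar{w}_1)$ with $\sigma_\beta\in W_P$, contradicting Lemma \ref{minimal length}. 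Hence a cancellation of the form $v'\overset{\beta}{\to}v''\overset{\beta}{\leftarrow}v'$ must occur first, and the chain keeps starting at $\bar{w}_1^{-1}$. Without some argument of this kind your plan does not rule out the left end being undercut.

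A smaller but real inaccuracy is your description of the terminal configuration: you claim the process ends with an ascending chain from $\bar{w}_1^{-1}$ to $w_2^{-1}=(\bar{w}_2\sigma)^{-1}$ whose final stretch is labelled by simple roots of $\Sigma$, to be truncated at $\bar{w}_2^{-1}$. The wedge applications lower the peak and consume the $\Sigma$-labelled tail one edge at a time (the right end $\bar{w}_2^{-1}$ is never modified, while the original peak $w_2^{-1}$ disappears), so what the smoothing actually produces is an ascending chain from $\bar{w}_1^{-1}$ terminating at $\bar{w}_2^{-1}$ itself --- this is where the paper's remark that $\bar{w}_2$ is the minimum of $w_2W_P$ is doing work. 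If you prefer your ``truncation'' picture you must both justify that the process yields that shape and invoke the uniqueness of the minimal length representative to identify the truncation point with $\bar{w}_2^{-1}$; neither step is in your plan as written. With the left-end argument above and the endpoint bookkeeping corrected, your proof becomes the paper's proof.
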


\begin{proof}
We only have to show that $\bar{w}_1 \leq \bar{w}_2$ if $\bar{w}_1 \leq w_2$.
By Lemma \ref{minimal length}, there is a chain
$\bar{w}_1 \to \cdots \to w_2 \overset{\alpha_1}{\leftarrow} \cdots \overset{\alpha_n}{\leftarrow} \bar{w_2}$,
where $\alpha_1, \ldots, \alpha_n \in \Sigma$.
Then, applying Lemma \ref{lem. wedge} to this chain repeatedly,
we see $\bar{w}_1 \leq \bar{w}_2$ since $\bar{w}_2$ is the minimum element of $w_2W_P$.
\end{proof}

Lemma \ref{D lem. 3.5} claims that
the poset $W/W_P$ is identified with the subposet of $W$
consisting of all minimal length representatives.
For $w \in W$ and $\gamma \in \Phi^+ \setminus w(-\langle \Sigma \rangle)$,
combining Proposition \ref{BGG prop. 2.8} and Lemma \ref{D lem. 3.5},
Corollary \ref{BGG cor. 2.3} (i) guarantees that
$\gamma \in \Phi^+ \cap w(\Phi^- \setminus -\langle \Sigma \rangle)$ implies $\sigma_\gamma w W_P < w W_P$
and that $\gamma \not \in \Phi^+ \cap w(\Phi^- \setminus -\langle \Sigma \rangle)$ implies $\sigma_\gamma w W_P > w W_P$.
Thus we obtain the following proposition.
\begin{prop}
\label{length}
The length $l^P(w)$ coincides with the number of elements of $\Phi^+ \cap w(\Phi^- \setminus -\langle \Sigma \rangle)$.
\end{prop}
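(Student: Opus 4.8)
The plan is to pass to the unique minimal length representative $\bar w$ of $wW_P$ provided by Lemma \ref{minimal length} and to reduce the statement to the classical count $l(\bar w) = |\Phi^+ \cap \bar w(\Phi^-)|$ recalled before Lemma \ref{BGG lem. 2.2}. Since $l^P(wW_P) = l(\bar w)$ by definition, it suffices to prove two things: first, that the integer $|\Phi^+ \cap v(\Phi^- \setminus -\langle \Sigma \rangle)|$ is independent of the choice of representative $v$ of $wW_P$, so that we may take $v = \bar w$; and second, that $\bar w$ carries every root of $\langle \Sigma \rangle$ to a positive root, so that $\Phi^+ \cap \bar w(-\langle \Sigma \rangle) = \emptyset$ and hence $\Phi^+ \cap \bar w(\Phi^- \setminus -\langle \Sigma \rangle) = \Phi^+ \cap \bar w(\Phi^-)$.

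For the first point I would show that $W_P$ stabilizes the set $\Phi^+ \setminus \langle \Sigma \rangle$, equivalently $\Phi^- \setminus -\langle \Sigma \rangle = -(\Phi^+ \setminus \langle \Sigma \rangle)$. As $W_P$ is generated by the reflections $\sigma_\alpha$ with $\alpha \in \Sigma$, it is enough to treat one such reflection: if $\beta \in \Phi^+ \setminus \langle \Sigma \rangle$, then in the expansion of $\beta$ in the simple roots some simple root outside $\Sigma$ occurs with a positive coefficient, and since $\sigma_\alpha \beta = \beta - \langle \beta, \alpha^\vee \rangle \alpha$ alters only the coefficient of $\alpha \in \Sigma$, the same simple root still occurs in $\sigma_\alpha \beta$ with a positive coefficient; hence $\sigma_\alpha \beta$ is again a positive root not lying in $\langle \Sigma \rangle$. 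Consequently, for $v = \bar w \sigma$ with $\sigma \in W_P$ we get $v(\Phi^- \setminus -\langle \Sigma \rangle) = \bar w \sigma(\Phi^- \setminus -\langle \Sigma \rangle) = \bar w(\Phi^- \setminus -\langle \Sigma \rangle)$, which gives the coset-invariance.

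For the second point, suppose $\bar w \alpha \in \Phi^-$ for some $\alpha \in \Sigma$. Rewriting $l(\bar w \sigma_\alpha) = l(\sigma_\alpha \bar w^{-1})$ and applying Corollary \ref{BGG cor. 2.3} (ii) to $\bar w^{-1}$ and $\alpha$, we get $l(\bar w \sigma_\alpha) = l(\bar w) - 1$; but $\bar w \sigma_\alpha$ lies in $\bar w W_P = wW_P$, contradicting the minimality of $\bar w$ in its coset. Hence $\bar w \alpha \in \Phi^+$ for all $\alpha \in \Sigma$, and since every element of $\langle \Sigma \rangle$ is a nonnegative combination of the simple roots in $\Sigma$, its image under $\bar w$ is a nonnegative combination of positive roots that is again a root, hence a positive root. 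Combining the two points with the classical formula yields $l^P(wW_P) = l(\bar w) = |\Phi^+ \cap \bar w(\Phi^-)| = |\Phi^+ \cap \bar w(\Phi^- \setminus -\langle \Sigma \rangle)| = |\Phi^+ \cap w(\Phi^- \setminus -\langle \Sigma \rangle)|$.

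I expect the main obstacle to be the coset-invariance of the right-hand side: it is the step easiest to overlook, and it is where the compatibility between $W_P$ and the decomposition of $\Phi \setminus \langle \Sigma \rangle$ into positive and negative roots must be used with care. One could instead prove the proposition by induction on $l^P$ using the monotonicity statements established just before it, but the direct reduction to $\bar w$ seems cleaner.
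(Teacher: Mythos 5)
Your proof is correct, but it takes a genuinely different route from the paper. The paper derives the proposition from the monotonicity analysis in the paragraph immediately preceding it: combining Proposition \ref{BGG prop. 2.8}, Lemma \ref{D lem. 3.5} and Corollary \ref{BGG cor. 2.3} (i), it notes that $\sigma_\gamma wW_P < wW_P$ exactly when $\gamma \in \Phi^+ \cap w(\Phi^- \setminus -\langle\Sigma\rangle)$ and $\sigma_\gamma wW_P > wW_P$ otherwise, and then reads the length count off this descent description, leaving the actual counting implicit. You instead reduce directly to the minimal length representative $\bar{w}$, for which $l^P(wW_P)=l(\bar{w})$, and to the classical formula $l(\bar{w}) = \#\bigl(\Phi^+\cap\bar{w}(\Phi^-)\bigr)$ recalled before Lemma \ref{BGG lem. 2.2}, supplying two auxiliary facts the paper never makes explicit: that $W_P$ stabilizes $\Phi^+\setminus\langle\Sigma\rangle$ (so the right-hand side is independent of the chosen coset representative, a point the statement tacitly assumes and which is used again in the normalization of $S_w^P$), and that $\bar{w}$ sends $\langle\Sigma\rangle$ into $\Phi^+$, which you correctly deduce from minimality via Corollary \ref{BGG cor. 2.3} (ii) together with $l(v)=l(v^{-1})$. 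Both verifications are sound (the stability argument via the unchanged coefficient of a simple root outside $\Sigma$ is the standard one, and a nonnegative combination of positive roots that is a root is positive). What your route buys is a self-contained, fully explicit proof that also settles representative-independence; what the paper's route buys is economy, since the descent statements it uses are exactly what it needs elsewhere in Subsection \ref{Equivariant Schubert classes}, so the proposition follows with no extra work once they are in place.
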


Now we are ready to construct a basis of $H^*(\G(G_\C/P))$.
A non-zero GKM function $f$ from the vertex set $V$ of a given graph
to a given graded ring $R$ is called \textit{homogeneous} of degree $n$
if, for any $v \in V$, $f$ satisfies that $f(v)=0$ or $f(v)$ is homogeneous of degree $n$.
The (equivariant) \textit{Schubert classes} $\{ S_w^P \}_{w \in W/W_P}$ of $G_\C/P$ are elements of $H^*(\G(G_\C/P))$,
which satisfy the following conditions;
\begin{itemize}
\item
$S_w^P$ is homogeneous of degree $2l^P(w)$,
\item
$S_w^P(w) = \prod_{\alpha \in \Phi^+ \cap w(\Phi^- \setminus -\langle \Sigma \rangle)} \alpha$,
\item
if $w \not< v$, then $S_w^P(v)=0$.
\end{itemize}
When $P=B$, we write simply $S_w$ for $S_w^P$.
These are equivariant analogues of the ordinary Schubert classes,
which are the Poincar\'e dual of the closures of the Schubert cells (cf. \cite{BGG}).
We will prove that the Schubert classes uniquely exist (Proposition \ref{Sc. unique} and \ref{Sc. exists})
and form a $H^*(BT)$-basis of $H^*(\G(G_\C/P))$ (Proposition \ref{Sc. basis}).

\begin{prop}
\label{Sc. unique}
The Schubert classes $\{ S_w^P \}_{w \in W/W_P}$ are unique if they exist.
\end{prop}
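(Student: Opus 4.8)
The plan is to show that if $S_w^P$ and $T_w^P$ both satisfy the three defining conditions, then $g := S_w^P - T_w^P \in H^*(\G(G_\C/P))$ is the zero GKM function. From the defining conditions, $g$ is homogeneous of degree $2l^P(w)$ (or identically zero), $g(w) = 0$ since $S_w^P(w) = T_w^P(w)$, and $g(v) = 0$ whenever $wW_P \not< vW_P$. The first observation I would record is that $g$ vanishes at every vertex $vW_P$ with $l^P(v) \le l^P(w)$: a strict inequality $wW_P < vW_P$ forces $l^P(w) < l^P(v)$, so any $vW_P \ne wW_P$ with $l^P(v) \le l^P(w)$ has $wW_P \not< vW_P$ and hence $g(v) = 0$, while $g(w) = 0$ directly.

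Arguing by contradiction, suppose $g \ne 0$ and choose a vertex $vW_P$ with $g(v) \ne 0$ minimizing $l^P(v)$; by the previous observation $l^P(v) > l^P(w)$, so $l^P(v) \ge l^P(w) + 1 \ge 1$. By the analysis of the Bruhat order on $W/W_P$ preceding Proposition \ref{length} together with Proposition \ref{length} itself, the vertex $vW_P$ is incident in $\G(G_\C/P)$ to exactly $l^P(v)$ edges whose labels are the pairwise distinct positive roots making up $\Phi^+ \cap v(\Phi^- \setminus -\langle \Sigma \rangle)$, and the other endpoint $v'W_P$ of each such edge satisfies $v'W_P < vW_P$, hence $l^P(v') < l^P(v)$. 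By the minimality of $l^P(v)$ we get $g(v') = 0$ for each of these neighbours, and the GKM condition on the corresponding edge then shows that each of these $l^P(v)$ labels divides $g(v)$ in $H^*(BT)$.

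Since $H^*(BT)$ is a polynomial ring over $\Z$, hence a unique factorization domain, and the positive roots are pairwise relatively prime by hypothesis, the product of these $l^P(v)$ distinct labels divides $g(v)$; but that product is homogeneous of degree $2l^P(v) > 2l^P(w)$, whereas $g(v)$ is a nonzero homogeneous element of degree $2l^P(w)$, which is absurd. Therefore $g = 0$, i.e.\ $S_w^P = T_w^P$. The only genuinely delicate ingredient is the combinatorics of the ``downward'' edges at $vW_P$ in the quotient poset $W/W_P$ --- that there are precisely $l^P(v)$ of them, with pairwise coprime labels, each descending to a strictly smaller vertex --- but this is exactly what Proposition \ref{length} and the preceding discussion of the Bruhat order on $W/W_P$ provide, so what remains is only the elementary degree count just carried out.
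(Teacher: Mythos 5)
Your argument is correct and is essentially the paper's own proof: the paper also observes that the difference $S_w^P - T_w^P$ vanishes off $\{v \suchthat l^P(v) \geq l^P(w)+1\}$ and derives the contradiction from the degree bound supplied by Proposition \ref{length}. You have merely made explicit the step the paper leaves implicit --- choosing a length-minimal vertex where the difference is nonzero and using the GKM condition on the downward edges, together with the pairwise relative primality of $\Phi^+$ (a standing assumption in this part of the paper), to see that the value there has degree at least $2l^P(v) > 2l^P(w)$.
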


\begin{proof}
For $w \in W/W_P$, if there are two distinct Schubert classes $S_w^P$, $T_w^P$,
then $S_w^P - T_w^P$ vanishes out of $\{ v \in W/W_P \suchthat l^P(v) \geq l^P(w)+1 \}$.
By Proposition \ref{length}, such GKM function is of degree at least $2(l^P(w) +1)$.
However $S_w^P$ and $T_w^{P}$ are of degree $2l^P(w)$,
and this is a contradiction.
\end{proof}

\begin{lem}
\label{lem. daSw=Ssaw}
Suppose that $w \in W$ and $\alpha \in \Delta$ satisfy $\sigma_\alpha w < w$
and that the Schubert class $S_w$ exists.
Then $\delta_\alpha S_w$ exists and $\delta_\alpha S_w = S_{\sigma_\alpha w}$.
\end{lem}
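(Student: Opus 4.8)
The plan is to use the uniqueness of Schubert classes (Proposition~\ref{Sc. unique}): since $\delta_\alpha S_w$ is already known to be a well-defined element of $H^*(\G(G_\C/B))$, it suffices to check that it satisfies the three conditions defining $S_{\sigma_\alpha w}$ — this simultaneously shows that $S_{\sigma_\alpha w}$ exists and equals $\delta_\alpha S_w$. The degree condition is easy: $\sigma_\alpha$ is a graded automorphism of $H^*(BT)$ and $\deg\alpha=2$, so $\delta_\alpha$ sends a homogeneous GKM function of degree $n$ to one of degree $n-2$ (or to $0$), and $2l(w)-2=2l(\sigma_\alpha w)$ because $\sigma_\alpha w<w$ with $\alpha$ simple forces $l(\sigma_\alpha w)=l(w)-1$; so it only remains to see $\delta_\alpha S_w\neq 0$, which the next step also yields.

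The crucial computation is the value at $\sigma_\alpha w$. By definition $\delta_\alpha S_w(\sigma_\alpha w)=\frac{1}{\alpha}\bigl(S_w(\sigma_\alpha w)-\sigma_\alpha(S_w(w))\bigr)$, where $S_w(\sigma_\alpha w)=0$ since $\sigma_\alpha w\not\geq w$. The key combinatorial fact is the identity
\[
\sigma_\alpha\bigl(\Phi^+\cap w(\Phi^-)\bigr)=\{-\alpha\}\sqcup\bigl(\Phi^+\cap\sigma_\alpha w(\Phi^-)\bigr),
\]
which I would prove from $\alpha\in\Phi^+\cap w(\Phi^-)$ — equivalent to $\sigma_\alpha w<w$ by Corollary~\ref{BGG cor. 2.3}(ii) — together with the fact that $\sigma_\alpha$ sends $\alpha$ to $-\alpha$ and permutes $\Phi^+\setminus\{\alpha\}$. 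Applying $\sigma_\alpha$ to $S_w(w)=\prod_{\beta\in\Phi^+\cap w(\Phi^-)}\beta$ and inserting this identity, the resulting factor $-\alpha$ cancels $1/\alpha$, so $\delta_\alpha S_w(\sigma_\alpha w)=\prod_{\beta\in\Phi^+\cap\sigma_\alpha w(\Phi^-)}\beta$, which is exactly the value prescribed for $S_{\sigma_\alpha w}$ and a nonzero element of the integral domain $H^*(BT)$ (hence $\delta_\alpha S_w\neq 0$).

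For the third condition I must show $\delta_\alpha S_w(v)=0$ whenever $\sigma_\alpha w\not\leq v$ (the condition written $\sigma_\alpha w\not< v$ in the characterization). From $\delta_\alpha S_w(v)=\frac{1}{\alpha}\bigl(S_w(v)-\sigma_\alpha(S_w(\sigma_\alpha v))\bigr)$ it suffices to prove $S_w(v)=0$ and $S_w(\sigma_\alpha v)=0$, i.e. $w\not\leq v$ and $w\not\leq\sigma_\alpha v$. The first is immediate, since $w\leq v$ would give $\sigma_\alpha w\leq w\leq v$. For the second, suppose $w\leq\sigma_\alpha v$: if $\sigma_\alpha v<v$ then $w\leq v$ and we are back to the first case; if $\sigma_\alpha v>v$, apply the lifting statement \emph{``$x\leq z$, $\sigma_\alpha x<x$, $\sigma_\alpha z<z$ $\Rightarrow$ $\sigma_\alpha x\leq\sigma_\alpha z$''} with $x=w$ and $z=\sigma_\alpha v$ (so $\sigma_\alpha z=v<z$) to get $\sigma_\alpha w\leq v$ — in either case contradicting $\sigma_\alpha w\not\leq v$.

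The lifting statement is the technical heart, and I would prove it by induction on $l(z)$ using Lemma~\ref{lem. wedge}: if $x=z$ it is trivial; otherwise, choosing a covering $u\lessdot z$ with $x\leq u$ and writing $z=\sigma_\gamma u$ ($\gamma\in\Phi^+$), either $\gamma=\alpha$ (so $u=\sigma_\alpha z$ and $\sigma_\alpha x<x\leq\sigma_\alpha z$) or $\gamma\neq\alpha$, in which case $\sigma_\alpha z\lessdot z$ and $u\lessdot z$ form a wedge at the top vertex $z$ to which Lemma~\ref{lem. wedge} applies, producing $\sigma_\alpha u\lessdot u$ and $\sigma_\alpha u\lessdot\sigma_\alpha z$; as $l(u)<l(z)$, the inductive hypothesis applied to $x\leq u$ gives $\sigma_\alpha x\leq\sigma_\alpha u\leq\sigma_\alpha z$. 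Having verified all three conditions, Proposition~\ref{Sc. unique} yields $\delta_\alpha S_w=S_{\sigma_\alpha w}$. I expect the main difficulty to lie in this lifting lemma — in particular in lining up the hypotheses of Lemma~\ref{lem. wedge} (the positivity of $\sigma_\alpha\gamma$, the case distinction $\gamma=\alpha$ versus $\gamma\neq\alpha$, and the correct orientation of the wedge diagram) — everything else being routine.
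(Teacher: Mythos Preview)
Your proof is correct and follows essentially the same approach as the paper: verify the three defining conditions of $S_{\sigma_\alpha w}$ for $\delta_\alpha S_w$, compute the value at $\sigma_\alpha w$ via the identity $\sigma_\alpha\bigl(\Phi^+\cap w(\Phi^-)\bigr)=\{-\alpha\}\sqcup\bigl(\Phi^+\cap\sigma_\alpha w(\Phi^-)\bigr)$, and use Lemma~\ref{lem. wedge} to handle the vanishing condition. The paper's proof is considerably terser---it asserts ``if $w<v$ then $\sigma_\alpha w<\sigma_\alpha v$ by Lemma~\ref{lem. wedge}'' without spelling out the induction---so your explicit lifting argument (induction on $l(z)$, distinguishing $\gamma=\alpha$ from $\gamma\neq\alpha$ and applying the reverse implication of Lemma~\ref{lem. wedge}) is exactly the content the paper leaves implicit.
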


\begin{proof}
By the definition of the left divided difference operator,
$\delta_\alpha S_w$ vanishes out of $\{ \sigma_\alpha w \} \cup \{ v \in W \suchthat l(v) \geq l(w) \}$.
In particular, if $w < v$, then $\sigma_\alpha w < \sigma_\alpha v$ by Lemma \ref{lem. wedge}.
Hence $\delta_\alpha S_w$ vanishes out of $\{ v \in W \suchthat \sigma_\alpha w < v \}$.
By Lemma \ref{BGG lem. 2.2}, a straightforward computation shows
\[
\delta_\alpha S_w (\sigma_\alpha w) =
\prod_{\beta \in (\Phi^+ \setminus \alpha) \cap w(\Phi^-)} \sigma_\alpha(\beta) = \prod_{\beta \in \Phi^+ \cap \sigma_\alpha w(\Phi^-)}\beta.
\]
Therefore $\delta_\alpha S_w = S_{\sigma_\alpha w}$.
\end{proof}

\begin{prop}
\label{Sc. exists}
There exist the Schubert classes $\{ S_w^P \}_{w \in W/W_P}$.
\end{prop}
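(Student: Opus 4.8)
The plan is to construct the Schubert classes $\{S_w^P\}_{w\in W/W_P}$ by downward induction on the parabolic length $l^P$, starting from the unique maximal element of the poset $W/W_P$ and descending by repeated application of the left divided difference operators $\delta_\alpha$. Since the case $P=B$ is recovered by taking $\Sigma=\emptyset$, I would argue directly for a general parabolic subgroup $P$ corresponding to $\Sigma\subset\Delta$. There are two things to do: exhibit the top class explicitly, and carry out the descent step. Uniqueness is already Proposition \ref{Sc. unique}.

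\emph{Base case.} Let $\hat wW_P$ be the maximal element of $W/W_P$ and $u$ its minimal length representative (one has $u=w_0w_{0,P}$, with $w_0$, $w_{0,P}$ the longest elements of $W$, $W_P$). First I would record the identity $u^{-1}(\Phi^+)=(\Phi^-\setminus -\langle\Sigma\rangle)\sqcup\langle\Sigma\rangle$: indeed $l^P(\hat wW_P)=|\Phi^+\setminus\langle\Sigma\rangle|$, so Proposition \ref{length} forces $u(\Phi^-\setminus -\langle\Sigma\rangle)\subseteq\Phi^+$ on cardinality grounds, while the defining property of minimal length representatives (Lemma \ref{minimal length}) gives $u(\langle\Sigma\rangle)\subseteq\Phi^+$, and after applying $u^{-1}$ these two root sets exhaust $\Phi^+$. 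Define $S_{\hat w}^P$ to be supported only on $\hat wW_P$, with value $\prod_{\gamma\in u(\Phi^-\setminus -\langle\Sigma\rangle)}\gamma$ there. It is homogeneous of degree $2l^P(\hat w)$ and vanishes wherever $\hat wW_P\not< vW_P$ by construction, so the only point to verify is the GKM condition on the edges at $\hat wW_P$: such an edge is labelled by a positive root $\gamma$ with $\sigma_\gamma\hat wW_P\neq\hat wW_P$, and the displayed identity forces $u^{-1}\gamma\in\Phi^-\setminus -\langle\Sigma\rangle$, so $\gamma$ divides the value of $S_{\hat w}^P$. Hence $S_{\hat w}^P$ exists.

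\emph{Inductive step.} Suppose $wW_P$ is not maximal and $S_v^P$ exists for every $vW_P$ with $l^P(vW_P)>l^P(wW_P)$. Using Proposition \ref{length} (and the remark preceding it) one produces a simple root $\alpha$ with $\sigma_\alpha wW_P>wW_P$; put $w'=\sigma_\alpha w$, so $S_{w'}^P$ exists by hypothesis and $\sigma_\alpha w'W_P<w'W_P$. The heart of the matter is then the parabolic analogue of Lemma \ref{lem. daSw=Ssaw}: if $\alpha\in\Delta$, $\sigma_\alpha w'W_P<w'W_P$, and $S_{w'}^P$ exists, then $\delta_\alpha S_{w'}^P$ exists and equals $S_{\sigma_\alpha w'}^P$. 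Its proof copies that of Lemma \ref{lem. daSw=Ssaw} with $W$, $l$, and $\Phi^+\cap w(\Phi^-)$ replaced throughout by $W/W_P$, $l^P$, and $\Phi^+\cap w(\Phi^-\setminus -\langle\Sigma\rangle)$: Lemma \ref{lem. wedge} combined with Lemma \ref{D lem. 3.5} shows $\delta_\alpha S_{w'}^P$ is supported on $\{vW_P\suchthat\sigma_\alpha w'W_P\leq vW_P\}$, while the value at $\sigma_\alpha w'$ is computed from Lemma \ref{BGG lem. 2.2}, using that $\alpha\in\Phi^+\cap w'(\Phi^-\setminus -\langle\Sigma\rangle)$ (a consequence of $\sigma_\alpha w'W_P<w'W_P$ via the discussion following Lemma \ref{D lem. 3.5}) and that $\sigma_\alpha$ maps $(\Phi^+\cap w'(\Phi^-\setminus -\langle\Sigma\rangle))\setminus\{\alpha\}$ bijectively onto $\Phi^+\cap\sigma_\alpha w'(\Phi^-\setminus -\langle\Sigma\rangle)$. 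Applying this with $w'$ yields $S_w^P=\delta_\alpha S_{w'}^P$, completing the induction.

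The main obstacle is this parabolic version of Lemma \ref{lem. daSw=Ssaw}: one must check carefully that the support of $\delta_\alpha S_{w'}^P$ shrinks correctly under the Bruhat order on $W/W_P$ — which is exactly where the identification of $W/W_P$ with minimal length representatives (Lemmas \ref{minimal length} and \ref{D lem. 3.5}) and the stability of the diagrams in Lemma \ref{lem. wedge} enter — and that the product of roots produced by Lemma \ref{BGG lem. 2.2} has the predicted degree $2l^P(\sigma_\alpha w')$, via Proposition \ref{length}. By comparison the base case is routine once the formula for $u^{-1}(\Phi^+)$ is in hand.
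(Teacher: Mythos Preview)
Your argument is correct and complete in spirit, but it takes a genuinely different route from the paper. The paper treats $P=B$ first, then for general $P$ exploits the injective ring map $H^*(\G(G_\C/P))\hookrightarrow H^*(\G(G_\C/B))$ induced by the projection of GKM graphs: under it the top class $S_{\bar w_0}^P$ is identified with the ordinary Schubert class $S_{\bar w_0}$, and the paper then applies the already–proved Lemma~\ref{lem. daSw=Ssaw} (for $P=B$) in the target. The descent from $\bar w_0$ to an arbitrary minimal representative $\bar w$ is guaranteed in one stroke by the length identity $l(\bar w_0\bar w^{-1})=l(\bar w_0)-l(\bar w)$, whose reduced decomposition produces a chain $\bar w\to\cdots\to\bar w_0$ labelled by simple roots. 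By contrast, you stay entirely inside $H^*(\G(G_\C/P))$ and prove a parabolic version of Lemma~\ref{lem. daSw=Ssaw} from scratch, then run a bare downward induction on $l^P$.

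What each approach buys: the paper's reduction avoids redoing the support/value verification in the parabolic setting (it is literally the $P=B$ computation), and the length identity above immediately supplies the required simple reflection at every step. Your approach is more uniform (no special role for $P=B$, no appeal to the comparison map), at the cost of reproving Lemma~\ref{lem. daSw=Ssaw} with $\Phi^+\cap w(\Phi^-)$ replaced by $\Phi^+\cap w(\Phi^-\setminus -\langle\Sigma\rangle)$ throughout. One small point to tighten: your sentence ``Using Proposition~\ref{length} (and the remark preceding it) one produces a simple root $\alpha$ with $\sigma_\alpha wW_P>wW_P$'' actually needs a short extra argument---those statements give a \emph{positive} root, not a simple one. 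Either invoke the paper's length identity $l(\bar w_0\bar w^{-1})=l(\bar w_0)-l(\bar w)$, or observe that if every simple $\alpha$ satisfied $\bar w^{-1}\alpha\in(\Phi^-\setminus -\langle\Sigma\rangle)\cup\langle\Sigma\rangle$ then (this set being closed under root addition) $\bar w^{-1}\Phi^+=(\Phi^-\setminus -\langle\Sigma\rangle)\cup\langle\Sigma\rangle$, forcing $\bar w=u$.
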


\begin{proof}
First, we prove that Proposition \ref{Sc. exists} holds in the case $P=B$.
The unique element $w_0 \in W$ which sends all positive roots to negative roots
is called the \textit{longest element} of $W$.
A cell $BwB/B$ of the Bruhat decomposition $G/T = G_\C/B = \coprod_{w \in W}BwB/B$
is of dimension $2l(w)$
and $G/T$ is of dimension $2 |\Phi^+|$.
In particular, the top cell is unique since each cell is even dimensional.
Hence the element of $W(G)$ corresponding to the top cell
must be the longest element.
Obviously,
\begin{align*}
S_{w_0}(w) =
 \begin{cases}
 \prod_{\alpha \in \Phi^+} \alpha & w=w_0,\\
 0					    & w \neq w_0
 \end{cases}
\end{align*}
is a GKM function.
To show that there exists the Schubert class $S_w$ for other $w \in W$,
consider reduced decompositions of $w$ and $w_0$
and regard them as chains from the unit element $e$.
Then iteration of Lemma \ref{lem. wedge} leads us to the facts
that there is a chain $w \to \cdots \to w_0$ equipped with simple roots as arrows.
Therefore, in the case $P=B$, Proposition \ref{Sc. exists} follows from Lemma \ref{lem. daSw=Ssaw}.

Next, we prove Proposition \ref{Sc. exists} for general $P$.
Recall that, for $w \in W$, the minimal length representative $\bar{w}$ of $wW_P$
is minimum in $wW_P$,
and let us identify $W/W_P$ with the subposet of $W$ consisting of all minimal representatives.
By Lemma \ref{D lem. 3.5},
${\bar{w}_0}$ is the maximum element of $W/W_P$
and we can see that
\begin{align*}
S_{\bar{w}_0}^P(w) =
 \begin{cases}
 \prod_{\alpha \in \Phi^+ \cap \bar{w}_0(-\langle \Sigma \rangle)} \alpha & w = \bar{w}_0,\\
 0												  & w \neq \bar{w}_0
 \end{cases}
\end{align*}
is a GKM function on $\G(G_\C/P)$.
By definition, there is a natural GKM graph homomorphism $\G(G_\C/B) \to \G(G_\C/P)$
with a surjective underlying graph homomorphism
and it induces an injective ring homomorphism $H^*(\G(G_\C/P)) \to H^*(\G(G_\C/B))$.
Under this injection, $S_{\bar{w}_0}^P$ is the Schubert class $S_{\bar{w}_0}$.
To show there exists the Schubert class $S_{\bar{w}}^P$ for other $\bar{w} \in W/W_P$,
we show the equation
\[
l(\bar{w}_0\bar{w}^{-1})=l(\bar{w}_0) -l(\bar{w}).
\]
If this equation holds,
then a reduced decomposition of $\bar{w}_0\bar{w}^{-1}$
forms a chain from $\bar{w}$ to $\bar{w}_0$ such that all arrows are equipped with simple roots.
It means that the sequence of the corresponding divided difference operators sends $S_{\bar{w}_0}$ to $S_{\bar{w}}$
and, under the injection $H^*(\G(G_\C/P)) \to H^*(\G(G_\C/B))$, $S_{\bar{w}}^P$ is the Schubert class $S_{\bar{w}}$.

Let $\sigma$ be the longest element of $W_P$.
By Lemma \ref{minimal length}, we have $w_0 = \bar{w}_0 \sigma$,
and then
\[
l(\bar{w}_0\bar{w}^{-1})  = l({w}_0(\bar{w}\sigma)^{-1}).
\]
Since $w_0$ sends all positive roots to negative roots,
\[
l({w}_0(\bar{w}\sigma)^{-1}) = l({w}_0)-l(\bar{w}\sigma) = l(\bar{w}_0)-l(\bar{w}),
\]
where the latter equality holds by Lemma \ref{minimal length}.
\end{proof}

\begin{prop}
\label{Sc. basis}
Assume that $\Phi^+$ is pairwise relatively prime in $H^*(BT)$.
The Schubert classes $\{ S_w^P \}_{w \in W/W_P}$ form a basis of $H^*(\G(G_\C/P))$ over $H^*(BT)$.
\end{prop}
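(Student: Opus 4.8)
The plan is to verify the two halves of the claim---spanning and linear independence---separately, using the triangularity of the Schubert classes with respect to the Bruhat order on $W/W_P$; their existence and uniqueness are Propositions~\ref{Sc. exists} and~\ref{Sc. unique}. Two facts are used repeatedly. First, $H^*(BT)$ is a polynomial ring over $\Z$, hence an integral domain and a unique factorization domain. Second, by the discussion preceding Proposition~\ref{length}, the edges of $\G(G_\C/P)$ that descend from a vertex $wW_P$ in the Bruhat order are exactly the ones joining $wW_P$ to $\sigma_\gamma wW_P$ for $\gamma\in\Phi^+\cap w(\Phi^-\setminus-\langle\Sigma\rangle)$, each carrying the label $\gamma$; in particular $S_w^P(w)=\prod_{\gamma\in\Phi^+\cap w(\Phi^-\setminus-\langle\Sigma\rangle)}\gamma$ is a product of pairwise distinct, hence by hypothesis pairwise relatively prime, positive roots.

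For linear independence, suppose $\sum_w c_w S_w^P=0$ with $c_w\in H^*(BT)$ not all zero, and pick $w_0$ minimal in the Bruhat order among the $w$ with $c_w\ne0$. Evaluating the relation at $w_0$, every term with $w\ne w_0$ vanishes: if $w\not\leq w_0$ then $S_w^P(w_0)=0$ by the third defining property of Schubert classes, and if $w<w_0$ then $c_w=0$ by minimality of $w_0$. Hence $c_{w_0}S_{w_0}^P(w_0)=0$, and since $H^*(BT)$ is a domain and $S_{w_0}^P(w_0)\ne0$ we conclude $c_{w_0}=0$, a contradiction.

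For spanning, enumerate the elements of $W/W_P$ as $v_1,\dots,v_N$ in an order refining the Bruhat order, so that $v_i<v_j$ implies $i<j$; then every vertex strictly below $v_i$ lies in $\{v_1,\dots,v_{i-1}\}$. Given a GKM function $f$ on $\G(G_\C/P)$, I would construct $c_{v_1},\dots,c_{v_N}\in H^*(BT)$ recursively so that $g_i:=f-\sum_{j\le i}c_{v_j}S_{v_j}^P$ (again a GKM function) vanishes at $v_1,\dots,v_i$. Since $v_i\not\leq v_k$ for $k<i$ (otherwise $v_i<v_k$ would force $i<k$), we have $S_{v_i}^P(v_k)=0$, so replacing $g_{i-1}$ by $g_i$ does not disturb the vanishing already obtained, and the only condition on $c_{v_i}$ is $g_{i-1}(v_i)=c_{v_i}S_{v_i}^P(v_i)$. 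Here $g_{i-1}$ vanishes at every vertex strictly below $v_i$, in particular at each descending neighbour $\sigma_\gamma v_i$, so the GKM condition along those edges gives $\gamma\mid g_{i-1}(v_i)$ for every $\gamma\in\Phi^+\cap v_i(\Phi^-\setminus-\langle\Sigma\rangle)$; as these $\gamma$ are pairwise relatively prime in the UFD $H^*(BT)$, their product $S_{v_i}^P(v_i)$ divides $g_{i-1}(v_i)$, and we may set $c_{v_i}:=g_{i-1}(v_i)/S_{v_i}^P(v_i)\in H^*(BT)$. Then $g_N\equiv0$, so $f=\sum_w c_w S_w^P$, which together with linear independence finishes the proof.

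The step I expect to be the main obstacle is the divisibility assertion in the spanning argument: that the value at $v_i$ of a GKM function vanishing below $v_i$ is divisible by the product of the labels of the edges descending from $v_i$. This is exactly the place where the pairwise-relative-primality of $\Phi^+$ is indispensable---without it the analogous statement fails, as the example in Subsection~3.2 already shows for divided difference operators---and it also depends on the combinatorial identification of the descending edges at a vertex supplied by Proposition~\ref{length} and the discussion preceding it.
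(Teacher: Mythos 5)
Your proof is correct and takes essentially the same route as the paper: linear independence via upper-triangularity (evaluate at a minimal element with nonzero coefficient, using that $H^*(BT)$ is a domain), and spanning via the observation that the labels of the descending edges at a vertex are pairwise relatively prime, so their product $S_{v_i}^P(v_i)$ divides the value there of a GKM function vanishing below $v_i$. The only difference is organizational — you run the spanning induction along a linear extension of the Bruhat order (much as the paper itself does in the proof of Theorem \ref{generators of EIII}), while the paper's proof of Proposition \ref{Sc. basis} repeatedly subtracts at a minimal element of the support; your bookkeeping is if anything slightly more explicit.
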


\begin{proof}
First, we show that $\{ S_w^P \}_{w \in W/W_P}$ is linearly independent over $H^*(BT)$.
Suppose that $\Sigma_{w\in W/W_P} a_w S_w^P = 0$ ($a_w \in H^*(BT)$)
and consider a minimal element $w'$ such that $a_{w'} \neq 0$.
If such $w'$ exists,
then $0 = \Sigma_{w\in W/W_P} a_w S_w^P(w') = a_{w'} S^P_{w'}(w') \neq 0$.
This is a contradiction.

Next we show that $\{ S_w^P \}_{w \in W/W_P}$ spans $H^*(\G(G_\C/P))$.
Fix any GKM function $f \in H^*(\G(G_\C/P))$
and let $w' \in W/W_P$ be a minimal element such that $f(w') \neq 0$.
Since $\Phi^+$ is pairwise relatively prime, the GKM condition says that
$f(w') = b_{w'}\prod_{\alpha \in \Phi^+ \cap w(\Phi^- \setminus -\langle \Sigma \rangle)} \alpha = b_{w'}S_{w'}^P(w') $ for some $b_{w'} \in H^*(BT)$.
Then $f-b_{w'}S_{w'}^P$ vanishes at $w'$ and $\{ w \in W/W_P \suchthat f(w)=0 \}$.
By induction we can write $f$ as a $H^*(BT)$-linear combination of $S_w^P$'s.
\end{proof}

\subsection{GKM fiber bundles and Leray-Hirsch theorem}
\label{Leray-Hirsch}
Let $\Sigma_1 \subset \Sigma_2$ be subsets of $\Delta$
and $P_1 \subset P_2$ the corresponding parabolic subgroups containing the Borel subgroup $B$.
The maximal torus $T$ acts on the quotient space $P_2/P_1$ by the left multiplication
and $P_2/P_1$ is isomorphic to $G'/P'$,
where $G'$ is the Levi subgroup of $P_2$ corresponding to $\Sigma_2$
and $P'$ is the parabolic subgroup of $G'$ corresponding to $\Sigma_1$.
By this fact, let $\G(P_2/P_1)$ denote the GKM graph $\G(G'/P')$.
Note that $\G(P_2/P_1)$ is identified with the induced GKM subgraph of $\G(G_\C/P_1)$ on $W_{P_2}$
and that
the natural inclusion $i \colon \G(P_2/P_1) \to \G(G_\C/P_1)$ and the natural projection $p \colon \G(G_\C/P_1) \to \G(G_\C/P_2)$
are GKM graph homomorphisms.

More generally,
let $\Phi_1 \subset \Phi_2$ be subsets of the root system $\Phi$
with $\Phi_i = \langle \Phi_i \rangle$
and $W_i$ the subgroup of $W=W(G)$
generated by the reflections associated with roots in $\Phi_i$ for $i=1,2$.
We define the GKM graph $\G(W_2/W_1)$ similarly to $\G(P_2/P_1)$.
The vertex set is $W_2/W_1$.
For $v$, $w \in W_2$ such that $vW_1 \neq wW_1$,
there is an edge between $vW_1$ and $wW_1$ labelled by $(\alpha)$
if and only if $v = \sigma_\alpha w$.
Moreover, for any $\alpha \in \Phi$, there is at most one such edge between $vW_1$ and $wW_1$.
To accord with our definition of GKM graph homomorphisms,
we will treat $\G(W_2/W_1)$ as if it has a loop labelled by the ideal $\{ 0 \} \subset H^*(BT)$ at each vertex.
The natural inclusion $i \colon \G(W_2/W_1) \to \G(W/W_1)$ and the natural projection $p \colon \G(W/W_1) \to \G(W/W_2)$
are also GKM graph homomorphisms.
In this situation, let $\F$, $\G$, and $\B$ denote the GKM graphs $\G(W_2/W_1)$, $\G(W/W_1)$, and $\G(W/W_2)$, respectively.
This sequence $\F \to \G \to \B$ of GKM graph homomorphisms is called a \textit{GKM fiber bundle}.
The term ``GKM fiber bundle'' was introduced by Guillemen, Sabatini, and Zara in \cite{GSZ}
and our definition is a special case of theirs
except that our definition allows multiple edges.

From now suppose that there is a sequence of vertices $v_1 = e$, $v_2$, \ldots, $v_N$ of $\B$
and there is a sequence of GKM functions $\varphi_1=1$, $\varphi_2$, \ldots, $\varphi_N$ on $\B$
which satisfy the following condition;
all $v_i$'s are distinct,
$\{ v_i \suchthat i \in [N] \} = V(\B)$, 
\[
\varphi_i(v_j)=0 \ (j < i), \text{ and } \quad \varphi_i(v_i) = \prod \alpha,
\]
where the product is taken over all roots $\alpha$ appearing as the labels of edges between $v_i$ and $v_j$ $(j < i)$.
Let $A_i$ denote the product $\varphi_i(v_i) = \prod \alpha$.
Since all labels are of degree $2$ and $\G(W/W_1)$ is connected,
for $i>1$, $\varphi_i(v_i)-\varphi_i(v_1)$ must be of positive degree.
Hence, for $i>1$, $\varphi_i$ is of degree at least $2$.
These sequences are needed for the induction
in the proof of Theorem \ref{generators} and \ref{relations}.

\begin{rem}
We can take such a sequence of vertices
and a sequence of GKM functions
for the GKM fiber bundle $\G(P_2/P_1) \to \G(G_\C/P_1) \to \G(G_\C/P_2)$.
Indeed, any sequence of vertices of $\G(G_\C/P_2)$ such that its image by $l^{P_2}$ is non-decreasing
and the sequence of the corresponding equivariant Schubert classes $S_{v_i}^{P_2}$'s
satisfy the conditions.
\end{rem}

\begin{ex}[{cf. \cite[Section $3$ and $4$]{S}}]
Let $G=F_4$, $\Phi_1=\emptyset$, and
\[
\Phi_2=\Phi(\mathrm{Spin}(9)) \subset
\Phi(F_4)=\{\pm(t_i+t_j), \pm(t_i-t_j), \pm t_k, \frac{1}{2}(\pm t_1 \pm t_2 \pm t_3 \pm t_4) \suchthat 1\leq i < j \leq 4,\ 1 \leq k \leq 4\}
\]
There is an element $\rho \in W(F_4)$ such that $\rho$, $\rho^2$, $\rho^3 = e$
form a complete system of representatives
for the left cosets of $W(\mathrm{Spin}(9))$ in $W(F_4)$ and
$\{ \rho^\varepsilon t_i \suchthat \varepsilon =1,2, \ 1\leq i\leq 4\}
=\{ \frac{1}{2}(\pm t_1 \pm t_2 \pm t_3 \pm t_4) \}$.
In particular, we have $\prod_{i=1}^4 t_i + \prod_{i=1}^4 \rho t_i +\prod_{i=1}^4 \rho^2 t_i = 0$.
Then the base graph $\B$ has only three vertices.
There are $4$ edges between $e$ and $\rho$,
and the set of their labels are $\{ \rho^2 t_i \suchthat 1\leq i\leq 4\}$.
The labels of other edges are easily calculated by multiplying $\rho$.
The following GKM function $\omega$ on $\B$ generates $H^*(\B)$.
\begin{equation}
\label{ataiomega}
\omega(\rho^\varepsilon)=
\begin{cases}
0					& \varepsilon = 0,\\
-\prod_{i=1}^4 \rho^2 t_i	& \varepsilon = 1,\\
\prod_{i=1}^4 \rho t_i		& \varepsilon = 2.
\end{cases}
\end{equation}
Put $v_1 = e$, $v_2 = \rho$, $v_3 = \rho^2$,
$\varphi_1=1$, $\varphi_2 = \omega$, and $\varphi_3 = \omega(\omega +\prod_{i=1}^4 \rho^2 t_i)$.
They satisfy the above condition.
\end{ex}

Theorem \ref{generators} and \ref{relations} show that
generators of $H^*(\G)$ and their relations
come from those of $H^*(\F)$ and $H^*(\B)$.

\begin{thm}
\label{generators}
Assume that $\Phi^+$ is pairwise relatively prime in $H^*(BT)$
and that $\{g_\lambda\}_{\lambda \in \Lambda}$ generates $H^*(\F)$ as an $H^*(BT)$-algebra.
Then the cohomology $H^*(\G)$ is generated by lifts $\{\tilde{g}_\lambda\}_{\lambda \in \Lambda}$ as an $H^*(\B)$-algebra.
In particular, $H^*(\G)$ is generated by $\{ \varphi_i\}_{i \in [N]}$ and $\{\tilde{g}_\lambda\}_{\lambda \in \Lambda}$ as an $H^*(BT)$-algebra.
\end{thm}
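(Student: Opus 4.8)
The plan is to apply Proposition~\ref{lifts of generators} with a judicious choice of the rings $R_1, R_2$ and modules $M_1, M_2$. Set $R_2 = H^*(BT)$ and $R_1 = H^*(\B)$, with $\psi\colon H^*(\B) \to H^*(BT)$ the ring homomorphism induced by the GKM graph homomorphism $\{e\} \to \B$ that picks out the base point $v_1 = e$ (equivalently, evaluation at $e$; note $H^*(\B)$ is an $H^*(BT)$-algebra, so this makes sense and is an $H^*(BT)$-algebra map). Take $M_1 = H^*(\G)$ as an $H^*(\B)$-algebra via $p^*\colon H^*(\B) \to H^*(\G)$, and $M_2 = H^*(\F)$ as an $H^*(BT)$-algebra via $i^*\colon H^*(\G) \to H^*(\F)$ restricted along $H^*(BT) \hookrightarrow H^*(\G)$. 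Then the composite $i^*\circ p^*$ agrees with $\psi^*$ on the nose, because $p$ carries the fiber vertices of $\G$ into $v_1$ and $i$ includes the fiber: concretely, $\psi^* \colon M_1 \to M_2$ in the sense of the proposition is exactly $i^*$.

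The first task is to verify the two hypotheses of Proposition~\ref{lifts of generators} in this setting. Condition (1), that $M_1 = H^*(\G)$ is a free $H^*(\B)$-module, follows from the equivariant Schubert-class description: by Proposition~\ref{Sc. basis}, $H^*(\G) = H^*(\G(W/W_1))$ is $H^*(BT)$-free on the Schubert classes; the GKM fiber bundle structure together with the sequence $v_1, \dots, v_N$ and GKM functions $\varphi_1, \dots, \varphi_N$ on $\B$ (assumed to exist) gives a filtration of $H^*(\G)$ as an $H^*(\B)$-module whose associated graded pieces are free, and one checks directly that $\{$products of $p^*\varphi_i$ with a fiber-Schubert basis$\}$ is an $H^*(\B)$-basis — this is where the $\varphi_i(v_j) = 0$ $(j<i)$, $\varphi_i(v_i) = A_i$ conditions are used, via a triangularity/induction argument exactly as in the linear-independence and spanning parts of Proposition~\ref{Sc. basis}. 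Condition (2), that $M_2 \cong R_2 \otimes_{R_1} M_1$, i.e. $H^*(\F) \cong H^*(BT) \otimes_{H^*(\B)} H^*(\G)$, then follows because tensoring the $H^*(\B)$-basis of $H^*(\G)$ down along $\psi$ kills the positive-degree part of $H^*(\B)$ (note $\psi$ sends $R_1^+ = \ker(\mathrm{ev}_e)$ appropriately and $R_1/R_1^+ \cong H^*(BT) \cong R_2/R_2^+$, as the proposition requires) and leaves precisely the fiber-Schubert classes, which are an $H^*(BT)$-basis of $H^*(\F)$.

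With (1) and (2) checked, Proposition~\ref{lifts of generators} applies verbatim: if $\{g_\lambda\}_{\lambda \in \Lambda}$ generates $M_2 = H^*(\F)$ as an $R_2 = H^*(BT)$-algebra, then any lifts $\{\tilde g_\lambda\}$ through $\psi^* = i^*$ generate $M_1 = H^*(\G)$ as an $R_1 = H^*(\B)$-algebra. This is the first assertion. For the "in particular" clause, it remains to observe that $H^*(\B)$ is itself generated as an $H^*(BT)$-algebra by $\{\varphi_i\}_{i \in [N]}$; this is again a triangularity argument from the defining properties of the $\varphi_i$ (the same induction: given any GKM function on $\B$, subtract an $H^*(BT)$-multiple of the first nonvanishing $\varphi_i$ to reduce the support), so that $p^*\varphi_i \in H^*(\G)$ together with the $\tilde g_\lambda$ generate $H^*(\G)$ over $H^*(BT)$.

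The main obstacle I expect is the freeness statement, condition (1): identifying the correct $H^*(\B)$-module basis of $H^*(\G)$ and proving it is a basis, since unlike the $H^*(BT)$-Schubert basis this requires combining the fiber Schubert classes with the lifted base classes $p^*\varphi_i$ and running the triangular induction over the product poset $W_2/W_1 \times V(\B)$; the pairwise-coprimality hypothesis on $\Phi^+$ is needed precisely here, to run the GKM-condition divisibility argument as in Proposition~\ref{Sc. basis}. Everything else is bookkeeping with the two exact functors already exploited in Propositions~\ref{images of relations}--\ref{lifts of relations}.
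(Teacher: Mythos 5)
Your strategy is genuinely different from the paper's: you want to deduce the theorem from the change-of-rings Proposition \ref{lifts of generators} applied to $\psi = \mathrm{ev}_e \colon H^*(\B)\to H^*(BT)$, $M_1 = H^*(\G)$, $M_2 = H^*(\F)$, whereas the paper proves generation directly by a double induction (on the degree of a GKM function, and a sweep over the base vertices $v_1,\dots,v_N$ subtracting $\varphi_j\tilde F_j$ at each stage), never invoking freeness of $H^*(\G)$ over $H^*(\B)$. The purely formal part of your reduction is fine: if hypotheses (1) and (2) of Proposition \ref{lifts of generators} hold via the natural maps, then $\psi^*$ is indeed $i^*$ and the first assertion follows, and your argument that the $\varphi_i$ generate $H^*(\B)$ over $H^*(BT)$ (hence the ``in particular'' clause) is the same triangular induction the paper uses in its last paragraph.

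The gap is that hypotheses (1) and (2) --- $H^*(\G)$ free as an $H^*(\B)$-module and $H^*(BT)\otimes_{H^*(\B)}H^*(\G)\cong H^*(\F)$ --- are exactly the equivariant Leray--Hirsch statement, which is stronger than the generation statement being proved and is not available at this point in the paper; your sketch of how to obtain them does not go through as written. First, the proposed $H^*(\B)$-basis is miscounted: the set of products of the $p^*\varphi_i$ with (lifts of) a fiber basis has cardinality $N\cdot\#(W_2/W_1)$ and could at best be an $H^*(BT)$-basis; an $H^*(\B)$-basis would have to consist of the lifted fiber classes alone. Second, in the generality of the theorem ($\Phi_1\subset\Phi_2$ arbitrary root subsystems, e.g.\ the $F_4\supset \mathrm{Spin}(9)$ example) the Schubert classes of $\G$ indexed by elements of $W_2$ do not restrict to the fiber Schubert classes, since $W_2$ need not permute $\Phi^+\setminus\Phi_2^+$; so a ``fiber-Schubert basis'' inside $H^*(\G)$ has to be manufactured from the given lifts $\tilde g_\lambda$, i.e.\ from the very data the theorem is about. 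Third, and most seriously, your triangular induction over the base vertices needs the chosen lifts to restrict to a generating set (indeed a basis) of $H^*(v_j\F)$ for every $j$, not just $j=1$; this is not automatic, and the paper's proof exists precisely to handle it, via the $W$-action ($v_j\cdot\tilde g_\lambda$ and Lemma \ref{vg_i}), the comparison \eqref{vg=g} showing $v\cdot\tilde g_\lambda$ agrees with $\tilde g_\lambda$ modulo lower-degree terms, and the outer induction on degree made possible by $\deg A_j\geq 2$. Making your step (1) rigorous would force you to reproduce exactly this induction, so as it stands the proposal assumes the hardest part of the theorem.
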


We will prove this theorem by considering GKM functions on each fiber.
For a vertex $v \in W$, we define an $H^*(BT)$-algebra homomorphism $\mathfrak{r}_v \colon H^*(\G) \to H^*(v\F)$,
which restricts GKM functions on $\G$ to ones on $v\F$.

\begin{lem}
\label{vg_i}
For any $v \in W$,
the cohomology $H^*(v\F)$ is generated by
$\{ \mathfrak{r}_v (v \cdot \tilde{g}_\lambda) \}_{\lambda \in \Lambda}$ as an $H^*(BT)$-algebra.
\end{lem}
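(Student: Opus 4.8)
The plan is to use that the fiber $v\F$ over the vertex $vW_2$ of $\B$ is nothing but a copy of $\F = e\F$ translated by $v$, with every edge label twisted by the ring automorphism $v$ of $H^*(BT)$. First I would record the elementary graph-theoretic fact: left multiplication by $v$ is a bijection of the vertex set $W_2/W_1$ of $\F$ onto the vertex set $vW_2/W_1$ of $v\F$ which, because $v\sigma_\alpha v^{-1} = \sigma_{v\alpha}$, carries an edge with label $(\alpha)$ to the corresponding edge of $v\F$, whose label is $(v\alpha)$. Using this I would introduce the map $L_v \colon H^*(\F) \to H^*(v\F)$ given by $(L_v h)(vuW_1) = v(h(uW_1))$ for $h \in H^*(\F)$ and $u \in W_2$, and check that it is a well-defined ring homomorphism: it sends GKM functions to GKM functions by the label-twisting just noted together with the fact that $v$ acts on $H^*(BT)$ by a ring automorphism, and it is multiplicative for the same reason. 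The analogous formula for $v^{-1}$ gives a two-sided inverse, so $L_v$ is a ring isomorphism.

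Next I would observe that, although $L_v$ is not $H^*(BT)$-linear, it does restrict to an isomorphism of the subring of constant functions $H^*(BT)\cdot 1_\F$ onto $H^*(BT)\cdot 1_{v\F}$, namely via the automorphism $v$ of $H^*(BT)$ itself. A ring isomorphism carrying one of these coefficient subrings onto the other sends an $H^*(BT)$-algebra generating set to an $H^*(BT)$-algebra generating set; applying this to $L_v$ and to the hypothesis that $\{g_\lambda\}_{\lambda\in\Lambda}$ generates $H^*(\F)$, I conclude that $\{L_v(g_\lambda)\}_{\lambda\in\Lambda}$ generates $H^*(v\F)$ as an $H^*(BT)$-algebra.

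The final step is a short unwinding of definitions identifying $L_v(g_\lambda)$ with $\mathfrak{r}_v(v\cdot\tilde g_\lambda)$. Here I would use that $e\F = \F$ and that $\tilde g_\lambda$ is, by construction, a lift of $g_\lambda$ through $\mathfrak{r}_e$, so that $\tilde g_\lambda(uW_1) = g_\lambda(uW_1)$ for all $u\in W_2$; then, from the definition $(v\cdot f)(x)=v(f(v^{-1}x))$ of the Weyl action, $(\mathfrak{r}_v(v\cdot\tilde g_\lambda))(vuW_1) = v(\tilde g_\lambda(uW_1)) = v(g_\lambda(uW_1)) = (L_v g_\lambda)(vuW_1)$ for every $u\in W_2$, and since the $vuW_1$ exhaust the vertices of $v\F$ the two GKM functions coincide, which gives the lemma. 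I expect the only genuinely delicate point to be the second step: because $L_v$ twists scalars by $v$, one has to phrase ``generation as an $H^*(BT)$-algebra'' with care and use simultaneously that $L_v$ is a ring isomorphism and that it restricts to an automorphism on the constant subring; everything else is bookkeeping.
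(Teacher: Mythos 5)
Your proposal is correct and is essentially the paper's own argument: your isomorphism $L_v$ is exactly the translation $f \mapsto v\cdot f$ (restricted between the fibers $\F$ and $v\F$) that the paper uses, and both proofs rest on the observation that this translation twists $H^*(BT)$-coefficients only by the ring automorphism $v$, hence carries an $H^*(BT)$-algebra generating set of $H^*(\F)$ to one of $H^*(v\F)$. You merely make explicit the edge-label twisting $v\sigma_\alpha v^{-1}=\sigma_{v\alpha}$ and the identification $L_v(g_\lambda)=\mathfrak{r}_v(v\cdot\tilde g_\lambda)$ via $\mathfrak{r}_e(\tilde g_\lambda)=g_\lambda$, which the paper leaves implicit.
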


\begin{proof}
Let $v \cdot g_\lambda$ denote $\mathfrak{r}_v (v \cdot \tilde{g}_\lambda)$.
Fix any GKM function $f$ on $v\F$.
Since $v^{-1}\cdot f$ is a GKM function on $\F$,
it can be expressed as a polynomial in $\{g_\lambda\}_{\lambda \in \Lambda}$.
Hence, acting $v\cdot$ on them, $f$ is expressed as a polynomial in  $\{ v \cdot g_\lambda \}_{\lambda \in \Lambda}$.
\end{proof}

\begin{proof}[Proof of Theorem \ref{generators}]
We prove Theorem \ref{generators} by induction on the degree $2n$ of a GKM function $f$ on $\G$.
When $n=0$, $f$ is a constant function,
hence Theorem \ref{generators} holds for GKM functions of degree $0$.
Assume that Theorem \ref{generators} holds for GKM functions of degree at most $2(n-1)$.
Fix any homogeneous GKM function $f$ of degree $2n$
and put $f_1=f$.
If the following claim holds for any $j \leq N \ (= \# V(\B))$,
then we will obtain $f_{N+1} = 0$.
\begin{claim}[$j$]
If a GKM function $f_j$ on $\G$ vanishes on $v_i\F$ for $i<j$,
then there is a GKM function $\tilde{F}_j$ generated by $\{\tilde{g}_\lambda\}_{\lambda \in \Lambda}$ over $H^*(\B)$
such that $f_{j+1} = f_j - \varphi_j \tilde{F}_j$ vanishes on $v_j \F$.
\end{claim}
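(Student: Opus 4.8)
The plan is to work fiberwise over the single vertex $v_j \in V(\B)$ and build the correction term $\tilde F_j$ by exploiting the fact that $\varphi_j$ restricts on the fiber $v_j\F$ to multiplication by the unit-up-to-boundary element $A_j = \varphi_j(v_j)$. First I would restrict $f_j$ to the fiber $v_j\F$ via $\mathfrak r_{v_j}$; since $f_j$ vanishes on $v_i\F$ for $i<j$, the GKM condition along edges of $\G$ lying over edges of $\B$ into $v_j$ forces $\mathfrak r_{v_j}(f_j)$ to be divisible (in $H^*(BT)$-coefficients, using that $\Phi^+$ is pairwise relatively prime) by each label $\alpha$ of an edge from $v_j$ to a lower $v_i$, hence by their product $A_j = \varphi_j(v_j)$. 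So $\mathfrak r_{v_j}(f_j) = A_j \cdot h$ for a unique GKM function $h$ on $v_j\F$. By Lemma \ref{vg_i}, $h$ is a polynomial in $\{\mathfrak r_{v_j}(v_j\cdot\tilde g_\lambda)\}_{\lambda\in\Lambda}$ with coefficients in $H^*(BT)$; choose such a polynomial expression $h = P\big((v_j\cdot\tilde g_\lambda)_\lambda\big)$ and set $\tilde F_j := P\big((v_j\cdot\tilde g_\lambda)_\lambda\big)$ evaluated in $H^*(\G)$, i.e. the same polynomial in the global lifts $v_j\cdot\tilde g_\lambda$, which is a GKM function on $\G$ generated by the $\tilde g_\lambda$'s over $H^*(\B)$ (note the $v_j$-action and the coefficients stay inside $H^*(\B)$-coefficients after using that $H^*(\B)\subset H^*(\G)$ via $p^*$ together with the Schubert-class structure).

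Next I would set $f_{j+1} = f_j - \varphi_j\tilde F_j$ and check it vanishes on $v_j\F$. On the fiber $v_j\F$, $\varphi_j$ restricts to the constant $A_j = \varphi_j(v_j)$ because $\varphi_j(v_i)=0$ for $i<j$ and the edges of $\G$ over $\B$ respect the base labels — more precisely $\mathfrak r_{v_j}(\varphi_j) = A_j$ as an element of $H^*(v_j\F)$, being the pullback of the constant $A_j$. Hence $\mathfrak r_{v_j}(f_{j+1}) = A_j h - A_j\,\mathfrak r_{v_j}(\tilde F_j) = A_j h - A_j h = 0$, using that $\mathfrak r_{v_j}$ is an $H^*(BT)$-algebra homomorphism and $\mathfrak r_{v_j}(v_j\cdot\tilde g_\lambda)$ is exactly the generator appearing in $P$. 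It remains to confirm $f_{j+1}$ still vanishes on $v_i\F$ for $i<j$: there $\varphi_j$ restricts to the constant $\varphi_j(v_i)=0$, so $\varphi_j\tilde F_j$ restricts to $0$, and $f_j$ already vanishes there by hypothesis. Thus $f_{j+1}$ vanishes on $v_i\F$ for all $i\le j$, which sets up Claim $[j+1]$.

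The main obstacle I anticipate is the divisibility step: showing $\mathfrak r_{v_j}(f_j) = A_j h$ with $h$ \emph{itself a GKM function} on $v_j\F$, not merely a set function. The individual divisibilities $\alpha \mid \mathfrak r_{v_j}(f_j)(u)$ come from GKM edges of $\G$ connecting $u \in v_j\F$ to the (vanishing) fibers $v_i\F$; combining them into divisibility by the product $A_j$ uses pairwise coprimality of $\Phi^+$, but one must be careful that distinct edges into $v_j$ in $\B$ really do carry relatively prime labels and that every vertex of the fiber $v_j\F$ sees all of them — this is where the hypothesis that $\varphi_j(v_j)=\prod\alpha$ over \emph{all} such edges, together with connectedness of $\G(W/W_1)$, is used. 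Verifying that the quotient $h$ then satisfies the GKM condition along edges \emph{within} $v_j\F$ is a short computation: for an edge of $v_j\F$ with label $\beta$, $h(u)-h(u') = A_j^{-1}(\mathfrak r_{v_j}(f_j)(u) - \mathfrak r_{v_j}(f_j)(u'))$ is $A_j^{-1}$ times a multiple of $\beta$, and since $\beta$ is coprime to each factor of $A_j$ (as $\beta$ is a label inside the fiber and the factors of $A_j$ are labels transverse to it, all lying in the pairwise-coprime set $\Phi^+$ up to sign), $h(u)-h(u') \in (\beta)$. Once this is in hand, the rest is the bookkeeping above and the termination of the induction on $j$, followed by the outer induction on $n$ as already set up before the claim.
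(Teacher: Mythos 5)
Your construction follows the paper's proof of the Claim step for step: restrict $f_j$ to $v_j\F$, use the GKM condition along the edges labelled by the factors of $A_j$ (which run from each vertex of $v_j\F$ into the earlier, vanishing fibers) together with pairwise coprimality of $\Phi^+$ to get $\mathfrak{r}_{v_j}(f_j)=A_jF_j$, check that $F_j$ is again a GKM function on $v_j\F$ because the factors of $A_j$ never occur as labels inside the fiber, apply Lemma \ref{vg_i}, and subtract $\varphi_j\tilde F_j$; the verification that $f_{j+1}$ vanishes on $v_j\F$ and stays zero on $v_i\F$ for $i<j$ is also exactly as in the paper. The genuine gap is the parenthetical where you assert that $\tilde F_j$ --- a polynomial in the \emph{translated} lifts $v_j\cdot\tilde g_\lambda$ with $H^*(BT)$-coefficients --- is ``generated by the $\tilde g_\lambda$'s over $H^*(\B)$'' because ``the $v_j$-action and the coefficients stay inside $H^*(\B)$-coefficients \ldots together with the Schubert-class structure.'' That is precisely the part of the Claim that does not come for free: $v_j\cdot\tilde g_\lambda$ is a priori just some GKM function on $\G$ of the same degree as $\tilde g_\lambda$, and neither $p^*H^*(\B)\subset H^*(\G)$ nor any Schubert-class argument places it in the $H^*(\B)$-subalgebra generated by $\{\tilde g_\lambda\}_{\lambda\in\Lambda}$.

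The paper closes this point by a degree argument you never invoke at this step: for $j>1$ the element $A_j$ has degree at least $2$, so $F_j$ is homogeneous of degree at most $2(n-1)$ and one may choose its polynomial expression to involve only generators $v_j\cdot g_\lambda$ with $|g_\lambda|\leq 2(n-1)$; each corresponding $v_j\cdot\tilde g_\lambda$ is then a GKM function on $\G$ of degree at most $2(n-1)$, and the \emph{outer} induction hypothesis on $n$ (Theorem \ref{generators} in degrees $\leq 2(n-1)$) --- used inside the Claim, not merely ``as already set up before the claim'' --- shows it is a polynomial in the $\tilde g_\lambda$'s over $H^*(\B)$, hence so is $\tilde F_j$. (For $j=1$ no translation or induction is needed since $v_1=e$.) With this insertion your argument coincides with the paper's; without it, the conclusion that $\tilde F_j$ lies in the $H^*(\B)$-algebra generated by the $\tilde g_\lambda$'s, and therefore the final generation statement of Theorem \ref{generators}, is unproved.
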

First let us show that this claim holds in the case where $j=1$.
Since $\mathfrak{r}_e(f_1)$ is a GKM function on $\F$,
it can be expressed as a polynomial in  $\{ g_\lambda\}_{\lambda \in \Lambda}$.
Replacing $g_\lambda$ with $\tilde{g}_\lambda$,
we obtain a GKM function $\tilde{F}_1$ on $\G$ such that $f_1 - \tilde{F}_1$ vanishes on $\F$.
Then $f_2 = f_1 - \varphi_1 \tilde{F}_1$ vanishes on $v_1 \F = \F$.

Next let us show that the claim holds in the case where $j>1$.
Since the set of the positive roots of $G$ is pairwise relatively prime in $H^*(BT)$,
by the GKM condition, $f_j(v) \in (A_j)$ for any $v \in v_j\F$.
Let $V(v_j \F)$ denote the vertex set of $v_j \F$
and $F_j$ denote the set function $V(v_j \F) \to H^*(BT)$ defined by $f_j(v) = F_j(v) A_j$.
Since $\Phi^+$ is pairwise relatively prime
and any factor $\alpha \in \Phi$ of $A_j$ does not appear as the label of edge of $v_j \F$,
the GKM condition of $f_j$ on $v_j \F$
requires that $F_j$ is a GKM function on $v_j \F$.
Hence $F_j$ can be expressed as a polynomial in
$\{ v_j \cdot g_\lambda \suchthat |g_\lambda| \leq 2(n-1)\}$
by Lemma \ref{vg_i}
since $A_j$ is at least of degree 2.
Replacing $v_j \cdot g_\lambda$ with $v_j \cdot \tilde{g}_\lambda$,
we obtain a GKM function $\tilde{F}_j$ on $\G$,
which is a polynomial in $\{g_\lambda\}_{\lambda \in \Lambda}$ over $H^*(\B)$
by the induction on $n$.
By construction we have $\mathfrak{r}_{v_j}(\tilde{F}_j) = F_j$.
Hence $f_{j+1} = f_j - \varphi_j \tilde{F}_j$ vanishes on $v_j \F$.

We can verify that the last assertion holds as follows.
Let us regard a GKM function $f$ on $\B$ as a GKM function on $\G$
which is constant on each fiber.
For such GKM function $f$ on $G$,
the above $F_j$ is a constant GKM function on $v_j \F$.
Therefore $H^*(\B)$ is generated by $\{ \varphi_i\}_{i \in [N]}$.
\end{proof}

Let $\{ x_\lambda \}_{\lambda \in \Lambda}$ and $\{ y_i\}_{i \in [N]}$
be the sets of indeterminates with $|x_\lambda | = |g_\lambda |$ and $|y_i|=|\varphi_i|$.
We define graded $H^*(BT)$-algebra homomorphisms $\Xi_\F$, $\Xi_\G$, $\Xi_\B$, and $\iota_v$
for a vertex $v \in \B$ as follows.
\begin{align*}
\Xi_\F & \colon H^*(BT)[x_\lambda \suchthat \lambda \in \Lambda] \to H^*(\F), && x_\lambda \mapsto g_\lambda, \\
\Xi_\G & \colon H^*(BT)[x_\lambda, y_i \suchthat \lambda \in \Lambda, i \in [N]] \to H^*(\G),
				&& x_\lambda \mapsto \tilde{g}_\lambda, \ y_i \mapsto p^*(\varphi_i), \\
\Xi_\B & \colon H^*(BT)[y_i \suchthat i \in [N]] \to H^*(\B), && y_i \mapsto \varphi_i, \\
\iota_v  & \colon H^*(BT)[x_\lambda, y_i \suchthat \lambda \in \Lambda, i \in [N]] \to H^*(BT)[x_\lambda \suchthat \lambda \in \Lambda],
				&& x_\lambda \mapsto x_\lambda, \ y_i \mapsto \varphi_i(v)
\end{align*}
Let $I_\F$, $I_\G$, and $I_\B$ denote the kernels of $\Xi_\F$, $\Xi_\G$, and $\Xi_\B$, respectively.
Then we have
\begin{align*}
H^*(\F) & = H^*(BT)[x_\lambda \suchthat \lambda \in \Lambda]/I_\F, \\
H^*(\G) & = H^*(BT)[x_\lambda, y_i \suchthat \lambda \in \Lambda, i \in [N]]/I_\G,\\
H^*(\B) & = H^*(BT)[y_i \suchthat i \in [N]]/I_\B.
\end{align*}

\begin{thm}
\label{relations}
Assume that $\Phi^+$ is pairwise relatively prime in $H^*(BT)$,
that $\{ r_{\theta} \}_{\theta \in \Theta}$ generates the ideal $I_\F$,
and that there are lifts $\{\tilde{r}_{\theta}\}_{\theta \in \Theta}$
through $\iota_e$ contained in $I_\G$.
Then the ideal $I_\G$ is generated by $\{\tilde{r}_{\theta}\}_{\theta \in \Theta}$ and $I_\B$.
\end{thm}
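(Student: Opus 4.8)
The plan is to run the same fiberwise induction as in the proof of Theorem \ref{generators}, but now tracking relations instead of generators. Write $S = H^*(BT)[x_\lambda, y_i \suchthat \lambda \in \Lambda, i \in [N]]$ and let $J \subset S$ be the ideal generated by $\{\tilde r_\theta\}_{\theta\in\Theta}$ together with (the pullback of) $I_\B$; clearly $J \subset I_\G$, so there is a surjection $S/J \to H^*(\G) = S/I_\G$, and we must show it is injective, i.e. $J = I_\G$. Equivalently, if $P \in S$ satisfies $\Xi_\G(P) = 0$, we must show $P \in J$. Since everything is graded and $H^*(BT)$-linear, I would argue by induction on the degree of $P$, and within a fixed degree by a secondary induction along the filtration of $\B$ by the vertices $v_1 = e, v_2, \dots, v_N$, exactly as the sequence $\varphi_1, \dots, \varphi_N$ was used before.

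Here is the inductive skeleton. Given $P$ with $\Xi_\G(P) = 0$, consider $\iota_e(P) \in H^*(BT)[x_\lambda]$. Because $\Xi_\F \circ \iota_e = \mathfrak r_e \circ \Xi_\G$ (the square relating the restriction to the fiber over $e$ commutes by construction), we get $\Xi_\F(\iota_e(P)) = \mathfrak r_e(\Xi_\G(P)) = 0$, so $\iota_e(P) \in I_\F$, hence $\iota_e(P) = \sum_\theta h_\theta r_\theta$ for some $h_\theta \in H^*(BT)[x_\lambda]$. Choose lifts $\tilde h_\theta \in S$ and set $P' = P - \sum_\theta \tilde h_\theta \tilde r_\theta$; then $P \equiv P' \pmod{J}$, still $\Xi_\G(P') = 0$, and now $\iota_e(P') = 0$. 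The condition $\iota_e(P') = 0$ means every monomial of $P'$, after setting $y_i \mapsto \varphi_i(e)$, cancels; since $\varphi_1 = 1$ but for $i > 1$ the GKM function $\varphi_i$ is homogeneous of positive degree, and since $f_2 = f_1 - \varphi_1\tilde F_1$ vanished on $\F$ in the earlier proof, the natural move is: $\iota_e(P') = 0$ forces $P'$ to lie in the ideal generated by $\{y_i - \varphi_i(e)\}$ — or better, after a change of polynomial variables $y_i \mapsto y_i + \varphi_i(e)$, it lies in $(y_1, \dots, y_N)$ appropriately, matching the statement that the image $p^*(\varphi_i)$ vanishes on the base point. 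This lets me peel off a term divisible by some $y_j$ (equivalently by $p^*(\varphi_j)$), and iterate the fiberwise argument over $v_2, v_3, \dots$ exactly as $f_{j+1} = f_j - \varphi_j \tilde F_j$ did, using at each stage that $A_j = \varphi_j(v_j)$ has positive degree (so the residual polynomial has strictly smaller degree and the outer induction applies) and that $\Phi^+$ being pairwise relatively prime guarantees the divided-difference-style divisibility so that the restriction to $v_j\F$ of the leftover is again a genuine relation on the fiber, handled by $I_\F$ and the lifts $\tilde r_\theta$ conjugated by $v_j$.

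The cleanest way to organize this is probably to mimic Proposition \ref{lifts of relations}: apply the right-exact functor $H^*(\B)\otimes_{H^*(BT)}\cdot$ — or rather work with the $N$-step filtration of $\G$ by the fibers and induct — to an exact sequence expressing $I_\G$ as an extension built from $I_\B$ and copies of $I_\F$. Concretely, the GKM fiber bundle structure should give a short exact sequence of $H^*(BT)$-modules relating $H^*(\G)$ to $H^*(\B)$ and $H^*(\F)$ (a Leray–Hirsch type splitting, which is implicit in Theorem \ref{generators} and Proposition \ref{Sc. basis}: $H^*(\G)$ is free over $H^*(\B)$ with basis the lifted Schubert classes of the fiber), and then the argument of Proposition \ref{lifts of relations} applies almost verbatim with $R_1 = H^*(\B)[x_\lambda]$, $R_2 = $ the same with $y_i$ set to $\varphi_i$, or with the fiberwise specialization playing the role of $\psi^*$.

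The main obstacle I anticipate is the bookkeeping in the secondary induction: after subtracting $\sum \tilde h_\theta \tilde r_\theta$ one must be sure the leftover $P'$ genuinely restricts to a \emph{relation} on each successive fiber $v_j \F$ and not merely to something in the image of $\Xi_{v_j\F}$ that fails the degree bound — this is where the hypothesis that $A_j$ has positive degree, the pairwise-coprimality of $\Phi^+$, and the Weyl-translated generators $v_j \cdot \tilde g_\lambda$ of $H^*(v_j\F)$ (Lemma \ref{vg_i}) all have to be invoked in the right order, and where one must check that the lifts $\tilde r_\theta$ translated by $v_j$ still lie in $I_\G$. Verifying that the peeled-off contributions at step $j$ land in $J = (\{\tilde r_\theta\}, I_\B)$ rather than in a larger ideal is the crux; once the filtration terminates at $f_{N+1} = 0$ one concludes $P \in J$, giving $I_\G \subset J$ and hence equality.
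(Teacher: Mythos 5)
Your plan is the same as the paper's: a double induction, outer on the degree and inner along the vertex sequence $v_1=e,\dots,v_N$ of $\B$, subtracting at each stage a multiple of lifted relations so that the remainder's coefficients vanish on one more fiber, and ending with coefficients in $I_\B$. Your step at $v_1=e$ is correct and matches the paper. The proposal is incomplete, however, exactly at the point you yourself flag as ``the crux'' and leave unresolved: at a vertex $v_j$ with $j>1$, after extracting the factor $A_j=\varphi_j(v_j)$ from the coefficients (coprimality plus the GKM condition on $\B$, as you say), the correction term lies a priori in the ideal generated by the Weyl-translated lifts $v_j\cdot\tilde{r}_\theta$, not in $J=(\{\tilde{r}_\theta\})+I_\B$. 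Your remark that the residual polynomial has smaller degree ``so the outer induction applies'' does not repair this: the quotient $F_j$ obtained after dividing by $A_j$ is not an element of $I_\G$ (its image under $\Xi_\G$ only vanishes on $v_j\F$), so the induction hypothesis cannot be applied to it. The paper's resolution uses two specific ingredients you omit: (i) a Weyl action $v\cdot$ is defined on the polynomial ring $H^*(BT)[x_\lambda,y_i]$ compatibly with $\Xi_\G$, via $\sigma_\alpha\cdot\tilde{g}_\lambda=\tilde{g}_\lambda-\alpha\delta_\alpha\tilde{g}_\lambda$, so that translated generators differ from the originals by lower-degree terms; this is what makes Lemma \ref{gr_i} available at an arbitrary vertex and guarantees $v_j\cdot\tilde{r}_\theta\in I_\G$; and (ii) since $A_j$ has degree at least $2$, only translates with $|\tilde{r}_\theta|\le 2(n-1)$ occur, and it is to these elements of $I_\G$ that the outer induction on degree is applied, placing them (hence the correction $\tilde{F}_j$) in $J$. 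Without (i) and (ii) the inner induction does not close, so as written there is a genuine gap.

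Two further corrections. The process does not terminate with $f_{N+1}=0$: rather, after step $N$ every coefficient $k_m^{(N+1)}\in H^*(BT)[y_i]$ maps to a GKM function on $\B$ vanishing at every vertex, hence lies in $I_\B$, so $f_{N+1}\in I_\B\cdot H^*(BT)[x_\lambda]\subset J$. And your alternative of mimicking Proposition \ref{lifts of relations} by a change-of-rings or tensor argument presupposes that $H^*(\G)$ is free over $H^*(\B)$ on a basis lifted from the fiber; this is not available at this stage (Proposition \ref{Sc. basis} gives freeness over $H^*(BT)$ only), and it is essentially what Theorems \ref{generators} and \ref{relations} together are meant to establish, so that route is circular as sketched.
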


For brevity, we abbreviate the range of variables if it is obvious.
For example, we write $H^*(BT)[x_\lambda]$ for $H^*(BT)[x_\lambda \suchthat \lambda \in \Lambda]$.
Let us define some operators on $H^*(BT)[x_\lambda,y_i]$
corresponding to the action of $W$ on $H^*(\G)$ as follows.
For any $\alpha \in \Phi$,
by the definition of $\delta_\alpha$, we have
$\sigma_\alpha \cdot \tilde{g}_\lambda = \tilde{g}_\lambda - \alpha \delta_\alpha \tilde{g}_\lambda$.
Hence, for any $v \in W$,
\begin{equation}
\label{vg=g}
v \cdot \tilde{g}_\lambda \in \tilde{g}_\lambda + H^2(BT)H^{2(n-1)}(\G),
\end{equation}
since the Weyl group $W$ is generated by the reflections $\sigma_\alpha$'s.
For any $v \in W$ and $\lambda \in \Lambda$,
by \eqref{vg=g},
we have $v \cdot \tilde{g}_\lambda = \tilde{g}_\lambda + g_{\lambda,v}$,
where $g_{\lambda,v}$ is a polynomial in $\{ \varphi_j \suchthat |\varphi_j| < |\tilde{g}_\lambda|\}$ and
$\{ \tilde{g}_{\lambda'} \suchthat |\tilde{g}_{\lambda'}| < |\tilde{g}_\lambda|\}$ over $H^*(BT)$.
For any $i \in [N]$, in the same way we can see that
$v \cdot \varphi_i = \varphi_i + \varphi_{i,v}$,
where $\varphi_{i,v}$ is a polynomial in $\{ \varphi_j \suchthat |\varphi_j| < |\varphi_i|\}$ over $H^*(BT)$.
Put $g_{\lambda,e} = 0$ and $\varphi_{i,e} = 0$.
Fix $g_{\lambda,v}$'s and $\varphi_{i,v}$'s and replace $\tilde{g}_\lambda$ by $x_\lambda$ and $\varphi_i$ by $y_i$.
Then, for any $v \in W$,
we can define $v \cdot x_\lambda \in (H^*(BT)[y_i])[x_\lambda]$ and $v \cdot y_i \in H^*(BT)[y_i]$
and then a ring homomorphism
\[
v \cdot \colon H^*(BT)[x_\lambda,y_i] \to H^*(BT)[x_\lambda,y_i],
\quad x_\lambda \mapsto v \cdot x_\lambda, \ y_i \mapsto v \cdot y_i
\]
($v$ acts on $H^*(BT)$ ordinary).
Moreover we can define $\bar{v} \cdot$ to be the inverse.

\begin{lem}
\label{gr_i}
For any $f \in H^*(BT)[x_\lambda,y_i]$ such that $\Xi_\G(f)$ vanishes on $v \F$,
there exists an element $\tilde{f}$ of $H^*(BT)[x_\lambda,y_i]$
contained in the ideal $(\{ v \cdot \tilde{r}_\theta\}_{\theta \in \Theta})$
and satisfies that $\iota_v (f-\tilde{f}) =0$.
\end{lem}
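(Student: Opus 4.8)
The plan is to reduce Lemma \ref{gr_i} to the fibre over the base vertex $v_1=e$ and then transport the conclusion by the operator $v\cdot$ on $H^*(BT)[x_\lambda,y_i]$. Two preliminary observations are in order. First, on the subring $H^*(BT)[y_i]$ the map $\iota_v$ is the composite of $\Xi_\B$ with evaluation at the vertex $v$, since both are $H^*(BT)$-algebra maps carrying $y_i$ to $\varphi_i(v)$; in particular $\iota_e(y_i)=\varphi_i(e)=\varphi_i(v_1)$, which is $1$ for $i=1$ and $0$ for $i>1$, so $\ker\iota_e$ is the ideal $(y_i-\varphi_i(e)\suchthat i\in[N])$. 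Second, restriction to the fibre $\F$ over $e$ satisfies $\mathfrak{r}_e\circ\Xi_\G=\Xi_\F\circ\iota_e$: on $x_\lambda$ this reads $\mathfrak{r}_e(\tilde g_\lambda)=g_\lambda$ (the $\tilde g_\lambda$ being lifts of the $g_\lambda$ through $\mathfrak{r}_e$), and on $y_i$ it reads $\mathfrak{r}_e(p^*(\varphi_i))=\varphi_i(e)$, since $p^*(\varphi_i)$ restricts to a constant on $\F$.

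The case $v=e$ is then immediate: if $\Xi_\G(f')$ vanishes on $\F$, then $\Xi_\F(\iota_e(f'))=\mathfrak{r}_e(\Xi_\G(f'))=0$, so $\iota_e(f')\in\ker\Xi_\F=I_\F$, say $\iota_e(f')=\sum_\theta a_\theta r_\theta$ with $a_\theta\in H^*(BT)[x_\lambda]$. Setting $\tilde f'=\sum_\theta a_\theta\tilde r_\theta$, which lies in $(\{\tilde r_\theta\})=(\{e\cdot\tilde r_\theta\})$, we get $\iota_e(f'-\tilde f')=0$ because $\iota_e(\tilde r_\theta)=r_\theta$ and $\iota_e$ is the identity on $H^*(BT)[x_\lambda]$.

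For general $v$, given $f$ with $\Xi_\G(f)$ vanishing on $v\F$, I would put $f'=\bar v\cdot f$, so that $f=v\cdot f'$. The operator $v\cdot$ intertwines $\Xi_\G$ with the Weyl action on $H^*(\G)$, so $\Xi_\G(f')=v^{-1}\cdot\Xi_\G(f)$; this vanishes on $\F$ because $\Xi_\G(f)$ vanishes on $v\F$ and the graph automorphism $v^{-1}$ carries $v\F$ onto $\F$. By the previous paragraph there is $\tilde f'\in(\{\tilde r_\theta\})$ with $f'-\tilde f'\in\ker\iota_e$. Put $\tilde f=v\cdot\tilde f'$; since $v\cdot$ is a ring automorphism of $H^*(BT)[x_\lambda,y_i]$ carrying $(\{\tilde r_\theta\})$ onto $(\{v\cdot\tilde r_\theta\})$, $\tilde f$ lies in the required ideal, and $f-\tilde f=v\cdot(f'-\tilde f')$. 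It then remains to see that $\iota_v$ vanishes on $v\cdot(\ker\iota_e)$, and since $\ker\iota_e$ is generated by the elements $y_i-\varphi_i(e)$ and $\iota_v\circ(v\cdot)$ is a ring homomorphism, it suffices to check $\iota_v(v\cdot(y_i-\varphi_i(e)))=0$ for each $i$. Here $v\cdot y_i\in H^*(BT)[y_i]$, so by the first observation $\iota_v(v\cdot y_i)=(v\cdot\varphi_i)(v)=v(\varphi_i(e))=\varphi_i(e)$, using $\Xi_\B(v\cdot y_i)=v\cdot\varphi_i$ (immediate from the definition of $v\cdot y_i$), the definition of the Weyl action on $H^*(\B)$, and the fact that $v$ fixes $0$ and $1$; as $v$ also fixes the coefficient $\varphi_i(e)$, we conclude $\iota_v(v\cdot(y_i-\varphi_i(e)))=0$, hence $\iota_v(f-\tilde f)=0$.

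The main obstacle, and really the only non-formal point, is the last identity $\iota_v(v\cdot y_i)=\varphi_i(e)$: its content is that evaluating $v\cdot\varphi_i$ at the vertex $v$ returns the value $\varphi_i(v_1)=\varphi_i(e)$ of $\varphi_i$ at the base vertex, which vanishes for $i>1$ by the hypothesis on the $\varphi_i$. Everything else is diagram-chasing among $\Xi_\F$, $\Xi_\G$, $\Xi_\B$, $\iota_e$, $\iota_v$, the operators $v\cdot$ and $\bar v\cdot$, and the fibre restrictions $\mathfrak{r}_v$.
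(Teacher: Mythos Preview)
Your proof is correct and follows essentially the same strategy as the paper's: transport $f$ to the fibre over $e$ via $\bar v\cdot$, use that $\iota_e(\bar v\cdot f)\in I_\F$ to replace $r_\theta$ by $\tilde r_\theta$, and transport back by $v\cdot$. The paper's proof is terser at the final step, asserting that $\iota_e(\bar v\cdot f-f')=0$ means $\Xi_\B$ sends each coefficient (in the $x_\lambda$) to a GKM function vanishing at $e\in\B$, and that ``therefore'' $\iota_v(f-v\cdot f')=0$; your explicit computation $\iota_v(v\cdot y_i)=(v\cdot\varphi_i)(v)=v(\varphi_i(e))=\varphi_i(e)$, using that $\varphi_i(e)\in\{0,1\}$, makes precise exactly what the paper leaves implicit there.
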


\begin{proof}
Fix any $f \in H^*(BT)[x_\lambda,y_i]$ such that $\Xi_\G(f)$ vanishes on $v \F$.
Since $\Xi_\G(\bar{v} \cdot f)$ vanishes on $\F$,
$\iota_e (\bar{v} \cdot f) \in H^*(BT)[x_\lambda]$
is contained in the ideal $(\{ r_\theta\}_{\theta \in \Theta})$.
Replacing $r_\theta$ with $\tilde{r}_\theta$,
we obtain $f'$ which satisfies $\iota_e (\bar{v} \cdot f - f') = 0$.
This equation means that $\Xi_\B$ sends each coefficient of the polynomial
$\bar{v} \cdot f - f' \in (H^*(BT)[y_i])[x_\lambda]$ to a GKM function vanishing at $e \in \B$.
Therefore $\tilde{f} = v \cdot f'$ satisfies $\iota_v (f-\tilde{f}) =0$.
\end{proof}

\begin{proof}[Proof of Theorem \ref{relations}]
We prove Theorem \ref{relations} by induction on the degree $2n$ of $f \in I_\G$.
When $n=0$, we have $f \in H^0(BT)$,
and then $f=0$.
Assume that Theorem \ref{relations} holds for GKM functions of degree at most $2(n-1)$.
Fix any $f \in I_\G$ of degree $2n$ and put $f_1=f$.
If the following claim holds for any $j \leq N$,
then we will obtain $f_{N+1} (\equiv f \bmod (\{\tilde{r}_\theta\}_{\theta \in \Theta})+I_\B )$ which is contained in $I_\B H^*(BT)[x_\lambda]$.
Hence $f$ is contained in the ideal generated by $\{\tilde{r}_\theta\}_{\theta \in \Theta}$ and $I_\B$,
and the induction on $n$ proceeds.
\begin{claim}[$j$]
Let $f_j = \sum_m k_m^{(j)} x_m^{(j)} \in H^*(BT)[x_\lambda,y_i]$,
where $x_m^{(j)}$ is a monic monomial in $\{x_\lambda\}_{\lambda \in \Lambda}$ and $k_m^{(j)} \in H^*(BT)[y_i]$.
If $\Xi_\G(k_m^{(j)})$ vanishes on $v_i \F$ for any $m$ and $i<j$,
then there is an element $\tilde{F}_j$ contained in the ideal
generated by $\{\tilde{r}_\theta\}_{\theta \in \Theta}$ and $I_\B$
such that $\Xi_\G$ sends each coefficient of $f_{j+1} = f_j - y_j \tilde{F}_j = \sum_m k_m^{(j+1)} x_m^{(j+1)}$
to a GKM function which vanishes on $v_j \F$.
\end{claim}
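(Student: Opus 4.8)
The plan is to extract from the hypothesis the divisibility $\iota_{v_j}(k_m^{(j)})\in(A_j)$, where $A_j=\varphi_j(v_j)$, and then to assemble $\tilde{F}_j$ out of the resulting quotients by means of Lemma \ref{gr_i} together with the inductive hypothesis on the degree; this mirrors the proof of the analogous claim in Theorem \ref{generators}, lifted to the polynomial level.

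First I note that for $k\in H^*(BT)[y_i]$ the function $\Xi_\G(k)$ is constant on each fiber $v\F$, with value $\iota_v(k)=\Xi_\B(k)(v)$; thus the phrase ``$\Xi_\G(k)$ vanishes on $v\F$'' means exactly $\iota_v(k)=0$, and the hypothesis of the claim says that $\Xi_\B(k_m^{(j)})(v_i)=0$ for all $m$ and all $i<j$. The labels of the edges of $\B$ joining $v_j$ to $v_1,\dots,v_{j-1}$ are distinct positive roots whose product is $A_j$ (a root $\alpha$ labelling such an edge determines the opposite endpoint $\sigma_\alpha v_jW_2$). Hence, for each such $\alpha$, since $\Phi^+$ is pairwise relatively prime in the unique factorization domain $H^*(BT)$, the GKM condition gives $\Xi_\B(k_m^{(j)})(v_j)=\Xi_\B(k_m^{(j)})(v_j)-\Xi_\B(k_m^{(j)})(v_i)\in(\alpha)$, whence $\iota_{v_j}(k_m^{(j)})=\Xi_\B(k_m^{(j)})(v_j)=A_j\ell_m$ for some homogeneous $\ell_m\in H^*(BT)$ (for $j=1$ we have $A_1=1$ and this is vacuous). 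As $f_j$ is homogeneous of degree $2n$, the quotients assemble into a homogeneous element $G:=\sum_m\ell_m x_m^{(j)}\in H^*(BT)[x_\lambda]$ of degree $2n-|\varphi_j|$, and $\iota_{v_j}(f_j)=A_jG$.

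Next I would show that $\Xi_\G(G)$ vanishes on $v_j\F$. Since $f_j$ is obtained from $f\in I_\G$ by subtracting elements of the ideal generated by $\{\tilde{r}_\theta\}_{\theta\in\Theta}$ and $I_\B$, we have $f_j\in I_\G$, so $\Xi_\G(f_j)=0$; applying $\mathfrak{r}_{v_j}$ and using $\mathfrak{r}_{v_j}(\Xi_\G(k_m^{(j)}))=\iota_{v_j}(k_m^{(j)})=A_j\ell_m$ yields $A_j\cdot\mathfrak{r}_{v_j}(\Xi_\G(G))=0$ in $H^*(v_j\F)$, and since multiplication by the non-zero element $A_j$ is injective on $H^*(v_j\F)$ (GKM functions take values in the integral domain $H^*(BT)$) we conclude $\mathfrak{r}_{v_j}(\Xi_\G(G))=0$. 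Lemma \ref{gr_i}, applied to $G$ with $v=v_j$, then produces $\tilde{F}_j\in(\{v_j\cdot\tilde{r}_\theta\}_{\theta\in\Theta})$ with $\iota_{v_j}(G-\tilde{F}_j)=0$, hence $\iota_{v_j}(\tilde{F}_j)=\iota_{v_j}(G)=G$. To see that $\tilde{F}_j$ lies in the ideal generated by $\{\tilde{r}_\theta\}_{\theta\in\Theta}$ and $I_\B$: when $j=1$ this is immediate because $v_1\cdot$ is the identity; when $j>1$ we have $|\varphi_j|\ge2$, so $\tilde{F}_j$ is homogeneous of degree $2n-|\varphi_j|\le2(n-1)$, and in a homogeneous expression for $\tilde{F}_j$ only those $v_j\cdot\tilde{r}_\theta$ with $\deg(v_j\cdot\tilde{r}_\theta)=\deg\tilde{r}_\theta\le2(n-1)$ can occur; each such element lies in $I_\G$ (as $\Xi_\G(v_j\cdot\tilde{r}_\theta)=v_j\cdot\Xi_\G(\tilde{r}_\theta)=0$), so by the inductive hypothesis it, and therefore $\tilde{F}_j$, lies in the desired ideal.

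It then remains to verify that $f_{j+1}=f_j-y_j\tilde{F}_j=\sum_m k_m^{(j+1)}x_m^{(j+1)}$ has the required properties. Indeed $\iota_{v_j}(f_{j+1})=\iota_{v_j}(f_j)-\varphi_j(v_j)\,\iota_{v_j}(\tilde{F}_j)=A_jG-A_jG=0$, so each $\iota_{v_j}(k_m^{(j+1)})=0$, that is, $\Xi_\G(k_m^{(j+1)})$ vanishes on $v_j\F$; and for $i<j$ the $\Xi_\G$-image of every coefficient of $y_j\tilde{F}_j$ is a multiple of $p^*(\varphi_j)$, which vanishes on $v_i\F$ because $\varphi_j(v_i)=0$, so $\Xi_\G(k_m^{(j+1)})$ still vanishes on $v_i\F$. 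I expect the main obstacle to be the degree bookkeeping in the construction of $\tilde{F}_j$ — verifying that $G$, and with it $\tilde{F}_j$, drops strictly below degree $2n$ so that the inductive hypothesis is applicable to $v_j\cdot\tilde{r}_\theta$, and justifying the cancellation of $A_j$ — both of which rest on the pairwise coprimality of $\Phi^+$.
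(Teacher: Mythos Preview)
Your proof is correct and follows essentially the same route as the paper: you construct $G=F_j$ by dividing the evaluations $\iota_{v_j}(k_m^{(j)})$ by $A_j$, apply Lemma~\ref{gr_i} to obtain $\tilde{F}_j\in(\{v_j\cdot\tilde{r}_\theta\})$, and use the degree drop together with the inductive hypothesis to land in the ideal generated by $\{\tilde{r}_\theta\}$ and $I_\B$. You actually supply more detail than the paper at two points the paper leaves implicit: the justification that $\Xi_\G(F_j)$ vanishes on $v_j\F$ (your cancellation of $A_j$ via injectivity in the integral domain $H^*(BT)$, using $f_j\in I_\G$), and the verification that the vanishing of the coefficients on $v_i\F$ for $i<j$ is preserved when passing to $f_{j+1}$.
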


First let us show that this claim holds in the case where $j=1$.
Since $\Xi_\G(f_1)=0$,
by Lemma \ref{gr_i}, there exists an element $\tilde{F}_1$ of $H^*(BT)[x_\lambda,y_i]$,
which is contained in the ideal $(\{ \tilde{r}_\theta\}_{\theta \in \Theta})$
and satisfies that $\iota_e (f_1-\tilde{F}_1) =0$.
Put $f_2 = f_1 - y_1 \tilde{F}_1$, then $\iota_e(f_2)$ vanishes on $v_1 \F = \F$.

Next let us show that the claim holds in the case where $j>1$.
Since the set of the positive roots of $G$ is pairwise relatively prime in $H^*(BT)$
and $\Xi_\B(k_m^{(j)})$ is a GKM function on $\B$,
by the GKM condition, there exists $K_m^{(j)} \in H^*(BT)$ such that
$\Xi_\B(k_m^{(j)})(v_j) = K_m^{(j)} A_j$.
Put $F_j = \sum_m K_m^{(j)} x_m^{(j)}$.
Since $\Xi_\G(F_j)$ vanishes on $v_j \F$,
by Lemma \ref{gr_i},
there exists an element $\tilde{F}_j$ of $H^*(BT)[x_\lambda,y_i]$,
which is contained in the ideal
$(\{ v_j \cdot \tilde{r}_\theta\}_{\theta \in \Theta})$
and satisfies that $\iota_{v_j} (F_j-\tilde{F}_j) =0$.
In particular, $\tilde{F}_j$ is contained in the ideal
$(\{ v_j \cdot \tilde{r}_\theta \suchthat |\tilde{r}_\theta| \leq 2(n-1)\})$
since $A_j$ is at least of degree 2.
By the induction on $n$, $\{ v_j \cdot \tilde{r}_\theta \in H^*(BT)[x_\lambda,y_i] \suchthat |\tilde{r}_\theta| \leq 2(n-1)\}$
is contained in the ideal generated by $\{ \tilde{r}_\theta\}_{\theta \in \Theta}$ and $I_\B$.

Finally we verify that $f_{N+1}$ is contained in $I_\B H^*(BT)[x_\lambda ]$.
Since $\iota_{v_j}(f_{N+1}) = 0$, for any $j$,
the homomorphism $\Xi_\B$ sends all coefficients $k_m^{(N+1)}$ to $0 \in H^*(\B)$.
Hence $k_m^{(N+1)}$ is contained in $I_\B$.
\end{proof}

\section{\textsc{The GKM graph of $(E_6)_\C/B$ and its cohomology}}
\label{sec. GKM graph}
\subsection{The root system of $E_6$}
Let $T$ be a maximal torus of $E_6$.
First of all, we choose generators of $H^2(BT)$.
According to \cite[Planche V]{B}, the Dynkin diagram of $E_6$ is
\[
    \begin{xy}
    ( -16, 0)="1"*+[Fo:<2mm>]{},
    (  16,-16)="2"*+[Fo:<2mm>]{},
    (   0, 0)="3"*+[Fo:<2mm>]{},
    (  16, 0)="4"*+[Fo:<2mm>]{},
    (  32, 0)="5"*+[Fo:<2mm>]{},
    (  48, 0)="6"*+[Fo:<2mm>]{},
    ( -16,6)=""*{\alpha_1},
    (   0,6)=""*{\alpha_3},
    (  16,-22)=""*{\alpha_2},
    (  16,6)=""*{\alpha_4},
    (  32,6)=""*{\alpha_5},
    (  48,6)=""*{\alpha_6},
    \ar@{-} "1"+/r2mm/;"3"+/l2mm/,
    \ar@{-} "2"+/u2mm/;"4"+/d2mm/,
    \ar@{-} "3"+/r2mm/;"4"+/l2mm/,
    \ar@{-} "4"+/r2mm/;"5"+/l2mm/,
    \ar@{-} "5"+/r2mm/;"6"+/l2mm/
    \end{xy}
\]
where $\alpha_i$'s are the simple roots of $E_6$.
We can identify the dual $\mathfrak{t}^*$ of the Lie algebra of $T$ with the subspace of $\R^8$
consisting of points $\sum_{i=1}^8 x_i e_i$ such that $x_6=x_7=-x_8$,
where $\{ e_i \}_{i=1}^8$ denotes the standard orthonormal basis of $\R^8$ and $x_i \in \R$.
Then we can describe $\alpha_i$'s as follows;
\begin{align*}
& \alpha_1 = \frac{1}{2}(e_1 -e_2 -e_3 -e_4 -e_5 -e_6 -e_7 +e_8),\\
& \alpha_2 = e_1+e_2, \quad \alpha_i = e_{i-1} -e_{i-2} \ (3 \leq i \leq 6).
\end{align*}
The root system $\Phi(E_6)$ is given as
\begin{align}
\label{Phi(E_6)}
 \Phi(E_6)=
	\left\{
	 \begin{aligned}
	  &\pm(e_i+e_j), \ \pm(e_i-e_j),\\
	  &\pm \frac{1}{2}\biggl(\sum_{i=1}^5 \mu_i e_i + e_8 - e_7 - e_6 \biggr)
	 \end{aligned}
	 \ \Biggl| \ 1\leq i < j \leq 4, \ \mu_k=\pm 1, \ \prod_{k=1}^5 \mu_k = 1 \Biggr.
	\right\}.
\end{align}

According to \cite[Section 4 (B)]{TW}, the following $t_i$'s $(0 \leq i \leq 5)$ and $x$ are generators of $H^2(BT)$
and $H^*(BT) = \Z[x,t_0,t_1,\ldots,t_5]/(3x-c_1)$ for $c_1 = t_0+t_1+\cdots+t_5$.
\begin{align*}
t_0 & = \frac{1}{2}(e_1 +e_2 +e_3 +e_4 +e_5) -\frac{1}{6}(e_8 -e_7 -e_6),\\
t_i & = e_i + \frac{1}{3}(e_8 -e_7 -e_6) \ (1 \leq i \leq 5),\\
x   & = \frac{1}{2}(e_1 +e_2 +e_3 +e_4 +e_5 -e_6 -e_7 +e_8)
\end{align*}
Put $\bar{t} = x - t_0 = \frac{2}{3}(e_8 -e_7 -e_6)$ for later use.

According to \cite[Chapter 8]{A}, there is a maximal rank subgroup $U$ of $E_6$
of local type $D_5\times T^1$ with $D_5 \cap T^1 = \Z_4$,
which is the centralizer of the one dimensional torus $T^1$
defined by 
\[
\alpha_i(t) = 0 \quad (2 \leq i \leq 6, \ t \in T).
\]
The quotient manifold  $E_6/U$ is denoted by $EIII$.
Let $T^5$ be the standard maximal torus of $\mathrm{SO}(10)$
and $t_i'$ the standard basis of $H^2(BT^5)$.
Moreover let $\tilde{T}^5$ be the inverse image of $T^5$ under the universal covering $\mu \colon \mathrm{Spin}(10) \to \mathrm{SO}(10)$
and $\tilde{t}_i$ the image of $t_i'$ by $\mu^*$.
Changing the maximal torus $T_0 = T/T^1$ of $U/T^1=\mathrm{SO}(10)/\Z_2$
by a suitable inner automorphism,
we may regard $T^5$ as the inverse image of $T_0$
under the double covering $\mathrm{SO}(10) \to \mathrm{SO}(10)/\Z_2$.
For a finite set $\mathbf{x} = \{x_1, \ldots, x_n\}$ and $0 \leq i \leq n$,
let $c_i(\mathbf{x})$ denote the $i$-th elementary symmetric polynomial in $x_1, \ldots, x_n$.
According to \cite[(4.11)]{TW}, the natural homomorphism
$\iota_0^* \colon H^2(BT) \to H^2(U/T) \cong H^2(\mathrm{SO}(10)/T^5)
\cong H^2(\mathrm{Spin}(10)/\tilde{T}^5)\cong H^2(B\tilde{T}^5)$
satisfies
\[
\iota_0^*(\bar{t})=0, \ \iota_0^*(x) = \iota_0^*(t_0) = \gamma, \ \iota_0^*(t_i) = \tilde{t}_i \quad \text{for } i=1,2,3,4,5,
\]
where $H^*(B\tilde{T}^5) = \Z[\tilde{t}_i,\gamma \suchthat 1 \leq i \leq 5]/(2\gamma - c_1(\tilde{\mathbf{t}}))$
for $\tilde{\mathbf{t}}=(\tilde{t}_1, \ldots, \tilde{t}_5)$.

\subsection{A decomposition of $W(E_6)$}
From now, let us decompose $\G((E_6)_\C/B)$ into a fiber and a base GKM graph.
We define GKM functions $z_i$'s and $\bar{z}$ on $\G((E_6)_\C/B)$ as follows.
\begin{align*}
z_i(w) = w(t_i), \quad \bar{z}(w) = w(\bar{t}) \quad \text{ for } w \in W(E_6), \ 0 \leq i \leq 5
\end{align*}

Let
$\Lambda = \{\lambda \in H^2(BT) \suchthat \lambda = \bar{z}(w), \: w \in W(E_6)\}$,
and let $D_5$ denote the Lie subalgebra of that of $E_6$
such that its Dynkin diagram is given as
\[
    \begin{xy}
    (  16,-16)="2"*+[Fo:<2mm>]{},
    (   0, 0)="3"*+[Fo:<2mm>]{},
    (  16, 0)="4"*+[Fo:<2mm>]{},
    (  32, 0)="5"*+[Fo:<2mm>]{},
    (  48, 0)="6"*+[Fo:<2mm>]{},
    (   0,6)=""*{\alpha_3},
    (  16,-22)=""*{\alpha_2},
    (  16,6)=""*{\alpha_4},
    (  32,6)=""*{\alpha_5},
    (  48,6)=""*{\alpha_6},
    \ar@{-} "2"+/u2mm/;"4"+/d2mm/,
    \ar@{-} "3"+/r2mm/;"4"+/l2mm/,
    \ar@{-} "4"+/r2mm/;"5"+/l2mm/,
    \ar@{-} "5"+/r2mm/;"6"+/l2mm/
    \end{xy}
\]
Note that, for any $w \in W(D_5)$, we have $\bar{z}(w) = w(\bar{t}) = \bar{t}$.
By straightforward calculations we have
\begin{equation}
\label{Lambda}
\Lambda = \left\{ \left. \bar{t}, \: \frac{1}{2}\biggl(\sum_{i=1}^5 \mu_i e_i\biggr) + \frac{1}{6}(e_8 - e_7 - e_6), \: \pm e_j - \frac{1}{3}(e_8 -e_7 -e_6)
		\: \right| \: \mu_i = \pm 1, \: \prod_{i=1}^5 \mu_i = -1, \: 1 \leq j \leq 5 \right\}.
\end{equation}
For $\lambda \in \Lambda$, let $\rho_\lambda$ be an element of $W(E_6)$ such that $\rho_\lambda (\bar{t}) = \lambda$
and fix it.
For example, we can take them as in Table \ref{rho_lambda} (see Appendix \ref{appendix}).
Since $\# \Lambda = \#(W(E_6)/W(D_5)) (= 27)$,
the set of $\rho_\lambda$'s is a complete system of representatives
for the left cosets of $W(D_5)$ in $W(E_6)$.
Then we have a coset decomposition of $W(E_6)$.
\[
W(E_6) = \coprod_{\lambda \in \Lambda} \rho_\lambda W(D_5)
\]

\subsection{The GKM graph of $EIII$}
Let $P$ be the parabolic subgroup of $(E_6)_\C$ corresponding to $\{\alpha_2,\alpha_3,\alpha_4,\alpha_5,\alpha_6\}$.
Then we have $P/B \cong U/T$.
Consider the GKM graph $\G((E_6)_\C/P)$,
call it the GKM graph of $EIII$, and denote it by $\mathcal{G}(EIII)$.
Let us denote the vertex $\rho_\lambda W(D_5) \in W(E_6)/W(D_5)$ simply by the representative $\rho_\lambda$.
Since $\Phi(E_6) \setminus \Phi(D_5)$ consists of $16$ elements,
16 edges meet at the vertex $\rho_{\bar{t}}$.
For any element of $\Phi(E_6) \setminus \Phi(D_5)$,
the corresponding reflection appears as $\rho_\lambda$ in Table \ref{rho_lambda}.
Hence the other end points of these edges are distinct.
For any $\rho \in W(E_6)$, $\alpha \in \Phi(E_6)$, and two vertices $wW(D_5)$, $w'W(D_5)$ of $\mathcal{G}(EIII)$,
we have $w'W(D_5) = \sigma_\alpha wW(D_5)$ if and only if $\rho w'W(D_5) = \sigma_{\rho(\alpha)} \rho wW(D_5)$.
Hence each vertex of $\G(EIII)$ has $16$ edges
and the other endpoints of these edges are distinct.

If $\rho_\lambda$ and $\rho_{\lambda'}$ are adjacent
and the label of the edge is $(\alpha)$ for some $\alpha \in \Phi^+$,
then we have $\sigma_\alpha \rho_\lambda W(D_5) = \rho_{\lambda'}W(D_5)$.
Hence we obtain $\sigma_\alpha \lambda = \lambda'$, since $\bar{t}$ is a fixed point with respect to the action of $W(D_5)$ on $H^2(BT)$.
By the definition of $\bar{z}$, we have
\[
\bar{z}(\rho_\lambda) - \bar{z}(\sigma_\alpha \rho_\lambda)
= \rho_\lambda(\bar{t}) - \sigma_\alpha \rho_\lambda(\bar{t})
= \lambda - \sigma_\alpha \lambda = \lambda - \lambda'.
\]
On the other hand
\[
\lambda - \sigma_\alpha \lambda = 2 \frac{(\lambda, \alpha)}{(\alpha, \alpha)}\alpha,
\]
and the length of any root $\alpha \in \Phi(E_6)$ is $\sqrt{2}$.
Hence we obtain
\[
\lambda - \lambda' = (\lambda, \alpha)\alpha.
\]
Moreover one can easily see that $(\lambda, \alpha)$ is $1$, $0$, or $-1$ by \eqref{Phi(E_6)} and \eqref{Lambda}.
If $(\lambda, \alpha) = 0$, then $\lambda = \lambda'$ and
it is a contradiction.
If $(\lambda, \alpha) = 1$ (respectively $-1$), then the difference $\bar{z}(\rho_\lambda) - \bar{z}(\sigma_\alpha \rho_\lambda)$
is the root $\alpha$ (respectively $-\alpha$) and the difference is the generator of the label of the edge between $\rho_\lambda W(D_5)$ and $\rho_{\lambda'} W(D_5)$.
In particular, the number of elements of $\{\lambda' \in \Lambda \suchthat \bar{t} - \lambda' \text{ is a root}\}$ is 16
and such $\rho_{\lambda'}$ is adjacent to $\rho_{\bar{t}}$ as in Table \ref{rho_lambda}.
For $\rho$, $w$, $w' \in W(E_6)$, 
we have $w' = \sigma_\alpha w \Leftrightarrow \rho w' = \sigma_{\rho(\alpha)} \rho w$
and $\rho$ permutes the roots of $E_6$.
Then, for any $\lambda \in \Lambda$,
the number of elements of $\{\lambda' \in \Lambda \suchthat \lambda - \lambda'  \in \Phi(E_6)\}$ is 16
and such $\rho_{\lambda'}$ is adjacent to $\rho_\lambda$.
By the above argument we have obtained the following proposition.
\begin{prop}
\label{adjacent}
In the GKM graph of $EIII$,
$\rho_\lambda$ and $\rho_{\lambda'}$ are adjacent
if and only if $\lambda - \lambda' = \alpha$ for some $\alpha \in \Phi(E_6)$.
The label of the corresponding edge is $(\alpha)$.
\end{prop}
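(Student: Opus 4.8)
The plan is to push the whole question through the $W(D_5)$-invariant vector $\bar{t}$, so my first step would be to set up the dictionary between vertices and elements of $\Lambda$. Since $\bar{z}(w) = w(\bar{t})$ and $W(D_5)$ fixes $\bar{t}$, the function $\bar{z}$ is constant on each left coset $wW(D_5)$, hence descends to a map on the vertex set $W(E_6)/W(D_5)$ of $\mathcal{G}(EIII)$ sending $\rho_\lambda W(D_5)$ to $\lambda$; because the $27$ elements of $\Lambda$ are distinct and $\#\Lambda = \#(W(E_6)/W(D_5))$, this map is a \emph{bijection} onto $\Lambda$. The second preliminary fact I would record is the numerical one, read off from \eqref{Lambda}, that $(\lambda,\lambda) = \tfrac{4}{3}$ for every $\lambda \in \Lambda$. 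Together with $(\alpha,\alpha) = 2$ for every root, with $2(\lambda,\alpha)/(\alpha,\alpha) \in \Z$, and with the Cauchy--Schwarz inequality, this forces $(\lambda,\alpha) \in \{-1,0,1\}$ for all $\lambda \in \Lambda$ and $\alpha \in \Phi(E_6)$.

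Next I would treat the ``only if'' direction. By the definition of $\mathcal{G}((E_6)_\C/P)$, the vertices $\rho_\lambda W(D_5) \ne \rho_{\lambda'}W(D_5)$ are joined by an edge exactly when $\sigma_\alpha\rho_\lambda W(D_5) = \rho_{\lambda'}W(D_5)$ for some $\alpha \in \Phi^+$, and the label is then $(\alpha)$. Applying $\bar{z}$ to both sides and invoking the bijection above gives $\lambda' = \sigma_\alpha\lambda = \lambda - (\lambda,\alpha)\alpha$, so $\lambda - \lambda' = (\lambda,\alpha)\alpha$; since $(\lambda,\alpha) \in \{-1,0,1\}$ and $\lambda \ne \lambda'$ we must have $(\lambda,\alpha) = \pm1$, whence $\lambda - \lambda' = \pm\alpha \in \Phi(E_6)$ and $(\lambda-\lambda') = (\alpha)$ as ideals. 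The same computation shows that $\alpha$, being the positive root among $\pm(\lambda-\lambda')$, is unique, so there is exactly one such edge.

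For the converse I would start from the hypothesis $\beta := \lambda - \lambda' \in \Phi(E_6)$. From $(\beta,\beta) = 2$ and $(\lambda,\lambda) = (\lambda',\lambda') = \tfrac{4}{3}$ a one-line computation gives $(\lambda,\lambda') = \tfrac{1}{3}$, hence $(\lambda,\beta) = 1$ and $(\lambda',\beta) = -1$, so that $\sigma_\beta\lambda = \lambda'$ and $\sigma_{-\beta}\lambda' = \lambda$. Taking $\alpha$ to be whichever of $\beta$, $-\beta$ is positive, I would conclude via the bijection that $\sigma_\alpha$ carries one of the two vertices to the other: if $\alpha = \beta$ then $\bar{z}(\sigma_\alpha\rho_\lambda) = \lambda'$, so $\sigma_\alpha\rho_\lambda W(D_5) = \rho_{\lambda'}W(D_5)$; if $\alpha = -\beta$ then symmetrically $\sigma_\alpha\rho_{\lambda'}W(D_5) = \rho_\lambda W(D_5)$. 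Either way the vertices are adjacent with label $(\alpha) = (\beta) = (\lambda-\lambda')$.

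I do not expect a genuine obstacle: the argument is a transport of structure along $\bar{z}$, and the only real work is the two elementary facts $(\lambda,\lambda) = \tfrac{4}{3}$ and $(\lambda,\alpha) \in \{-1,0,1\}$ for $\lambda \in \Lambda$, $\alpha \in \Phi(E_6)$, both of which drop out of the explicit lists \eqref{Phi(E_6)} and \eqref{Lambda}. The one point that must be stated carefully is that $\bar{z}$ induces a \emph{bijection} --- not merely a surjection --- of the vertex set onto $\Lambda$; this is precisely where the equality $\#\Lambda = \#(W(E_6)/W(D_5)) = 27$ and the coset decomposition $W(E_6) = \coprod_{\lambda \in \Lambda}\rho_\lambda W(D_5)$ get used.
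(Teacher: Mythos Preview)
Your argument is correct. The ``only if'' direction coincides with the paper's: both deduce $\lambda - \lambda' = (\lambda,\alpha)\alpha$ with $(\lambda,\alpha)\in\{-1,0,1\}$ (you via integrality plus Cauchy--Schwarz using $(\lambda,\lambda)=\tfrac{4}{3}$, the paper by direct inspection of \eqref{Phi(E_6)} and \eqref{Lambda}). For the converse, however, the paper proceeds differently. It first verifies the claim at the base vertex $\rho_{\bar t}$ by reading off Table~\ref{rho_lambda}: the sixteen $\lambda'$ with $\bar t - \lambda'$ a root are exactly those whose chosen representative $\rho_{\lambda'}$ is the single reflection $\sigma_{\bar t - \lambda'}$, hence visibly adjacent to $eW(D_5)$; it then transports this to an arbitrary vertex using the $W(E_6)$-equivariance $w' = \sigma_\alpha w \Leftrightarrow \rho w' = \sigma_{\rho(\alpha)}\rho w$. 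You bypass both the table lookup and the equivariance step with the uniform observation $(\lambda,\lambda)=\tfrac{4}{3}$ for every $\lambda\in\Lambda$, which forces $(\lambda,\beta)=1$ whenever $\beta=\lambda-\lambda'$ is a root and hence $\sigma_\beta\lambda=\lambda'$ directly. Your route is shorter and self-contained; the paper's has the incidental virtue of making the valency $16$ of $\mathcal{G}(EIII)$ explicit along the way. One small redundancy: since $\sigma_{-\beta}=\sigma_\beta$, the case split on the sign of $\alpha=\pm\beta$ in your last paragraph is unnecessary.
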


\subsection{The cohomology of $\G(EIII)$}
First, let us consider $H^*(BT)$-algebra generators of $H^*(\G(EIII))$.
According to \cite[Corollary C]{TW},
one element of degree $2$ and one element of degree $8$ generate $H^*(EIII)$.
Thus we need corresponding GKM functions on $\G(EIII)$
since $H^*(\G(EIII)) \cong H^*_T(EIII)  \to H^*(EIII)$ is surjective.
First, let us define a GKM function of degree $2$.
Since $\bar{t}$ is a fixed point with respect to the action of $W(D_5)$,
$\tau = \bar{t} - \bar{z}$ can be regarded as a GKM function on $\G(EIII)$.
Next we define GKM function $\omega \colon V(\mathcal{G}(EIII)) \to H^*(BT)$
of degree $8$ as in Table \ref{omega} (see Appendix \ref{appendix}).
By straightforward calculations,
we can see that $\omega$ is a GKM function.
In this subsection, we will prove that $\tau$ and $\omega$ generate $H^*(\G(EIII))$ as an $H^*(BT)$-algebra
and reveal their relations.

Let $\mathbf{t}$ denote $\{ t_0, \ldots, t_5\}$
and $\mathbf{t}_{\geq i}$ denote the subset $\{ t_i, t_{i+1}, \ldots, t_5 \}$.
Moreover let us define four GKM functions $\omega_6$, $\omega_7$, $\omega_8$, and $\omega_{12}$,
which are polynomials in $\tau$ and $\omega$ over $H^*(BT)$, as follows:
\begin{align*}
\omega_6 =& \: -\prod_{\lambda} (\bar{z} - \lambda) +(3\tau^2 -(c_1(\mathbf{t}_{\geq 3}) -6\bar{t})\tau\\
	    & \: +(-2\bar{t}^2+(+2t_2+2t_1-5t_0)\bar{t}+c_2(\mathbf{t}_{\geq 3}) +t_0(t_2+t_1-t_0))\omega,\nonumber\\
\omega_7 =& \: 2(\tau +t_0 -t_1 -t_2 -\bar{t})\prod_{\lambda} (\bar{z} - \lambda) -(5\tau^3 +3(c_1(\mathbf{t}_{\geq 3}) -6\bar{t})\tau^2\\
	    & \: -(t_0^2 +t_0\bar{t} -(t_1+t_2)t_0 +20\bar{t}^2 -6\bar{t}c_1(\mathbf{t}_{\geq 3}) +c_2(\mathbf{t}_{\geq 3}))\tau \nonumber\\
          & \: -(2\bar{t}-t_3)(2\bar{t}-t_4)(2\bar{t}-t_5))\omega,\nonumber\\
\end{align*}
\begin{align*}
\omega_8 =& \: \omega^2 + (2\tau^4-(5\bar{t} +5t_1 -3t_0)\tau^3+(4\bar{t}^2+(9t_1-7t_0)\bar{t}+3t_1^2-6t_0t_1+t_0^2)\tau^2 \\
	    & \: +(-56\bar{t}^3+(32c_1(\mathbf{t}_{\geq 2})-20t_0)\bar{t}^2+(-4c_1(\mathbf{t}_{\geq 2}^2)-10c_2(\mathbf{t}_{\geq 2})10t_0^2)\bar{t} \nonumber \\
	    & \: +2t_0c_1(\mathbf{t}_{\geq 2}^2)+c_3(\mathbf{t}_{\geq 2})+3t_0c_2(\mathbf{t}_{\geq 2})-5t_0^2c_1(\mathbf{t}_{\geq 2})+3t_0^3)\tau \nonumber \\
          & \: +(t_1-t_0)(8\bar{t}^3+4(t_0-c_1(\mathbf{t}_{\geq 2}))\bar{t}^2+2(c_2(\mathbf{t}_{\geq 2})\bar{t}-t_0c_1(\mathbf{t}_{\geq 2})+t_0^2)\bar{t} \nonumber \\
          & \: -c_3(\mathbf{t}_{\geq 2})+t_0c_2(\mathbf{t}_{\geq 2})-t_0^2c_1(\mathbf{t}_{\geq 2})+t_0^3))\omega \nonumber \\
	    & \: +(-\tau^2+(\bar{t}+2t_1-t_0)\tau+(t_0-t_1)(\bar{t}+t_1))\prod_{\lambda} (\bar{z} - \lambda), \nonumber \\
\omega_{12}=& \: \omega_8 \biggl({\displaystyle r \Bigl(\frac{1}{2}\sum_{i=1}^5 e_i + \frac{1}{2}(e_8 - e_7 - e_6)\Bigr)} \cdot \omega \biggr),
\end{align*}
where the products are taken over $\{\lambda = \bar{t},\ -e_n +\frac{1}{2}\sum_{i=1}^5 e_i + \frac{1}{6}(e_8-e_7-e_6) \suchthat 1\leq n \leq 5\}$.
By evaluation at each vertex of $\G(EIII)$,
we can see that the following equation holds:
\begin{align*}
  & \: r \Bigl(\frac{1}{2}\sum_{i=1}^5 e_i + \frac{1}{2}(e_8 - e_7 - e_6)\Bigr) \cdot \omega \\
= & \: \omega +(\bar{t}+t_0)(2\tau^3 +(+c_2(\mathbf{t})-3(\bar{t}+t_0)^2)\tau +(c_3(\mathbf{t})-2c_2(\mathbf{t})(\bar{t}+t_0)+5(\bar{t}+t_0)^3)).
\end{align*}
In fact, it is sufficient to evaluate them only at $\rho_\lambda$
for $\lambda = {\displaystyle -\frac{1}{2}\sum_{i=1}^5 e_i + \frac{1}{4}\bar{t}}$, ${\displaystyle -e_j - \frac{1}{2}\bar{t}}$ ($1\leq j \leq 5$)
and $ \frac{1}{2}(-e_1 -e_2 -e_3 +e_4 +e_5)+ \frac{1}{4}\bar{t}$
because of the uniqueness of the Schubert class $S^P_{\rho_\lambda}$ for $\lambda = \frac{1}{2}(-e_1 -e_2 -e_3 +e_4 +e_5)+ \frac{1}{4}\bar{t}$.

By definition, we can easily see that
$\omega_6$, $\omega_7$, $\omega_8$, and $\omega_{12}$ vanish on
\[
V_4 = \biggl\{\rho_\lambda \ \Bigl| \Bigr. \ \bar{t}, \ -e_n +\frac{1}{2}\sum_{i=1}^5 e_i + \frac{1}{4}\bar{t}, \ 1 \leq n \leq 5 \biggr\}.
\]
By straightforward calculations,
we can see that\\
$\omega_6$ vanishes on $V_6 = V_4 \cup \biggl\{ \rho_\lambda \ \Bigl| \Bigr. \ \lambda = {\displaystyle -e_1 -e_2 -e_n +\frac{1}{2}\sum_{i=1}^5 e_i + \frac{1}{4}\bar{t}},\ 3\leq n \leq 5 \biggl\}$,\\
$\omega_7$ vanishes on $V_7 = V_6 \cup \biggl\{ \rho_\lambda \ \Bigl| \Bigr. \ \lambda = {\displaystyle e_j - \frac{1}{2}\bar{t}}, \ 3\leq j \leq 5 \biggl\}$,\\
$\omega_8$ vanishes on $V_8 = V_7 \cup \biggl\{ \rho_\lambda \ \Bigl| \Bigr. \ \lambda = {\displaystyle e_2 +e_n -\frac{1}{2}\sum_{i=1}^5 e_i + \frac{1}{4}\bar{t}, \ -e_1 - \frac{1}{2}\bar{t}, \ e_2 - \frac{1}{2}\bar{t}},\ 3\leq n \leq 5 \biggl\}$,\\
and $\omega_{12}$ vanishes on $V_{12} = V_8 \cup \biggl\{ \rho_\lambda \ \Bigl| \Bigr. \ \lambda = {\displaystyle -\frac{1}{2}\sum_{i=1}^5 e_i + \frac{1}{4}\bar{t}, \ -e_j - \frac{1}{2}\bar{t}},\ 2\leq j \leq 5 \biggl\}$.\\
Moreover
\begin{align*}
\omega_6(\rho_\lambda) &= \prod_{ \substack{ \lambda' \in V_6 \\ \lambda-\lambda' \in \Phi }}(\lambda-\lambda') \quad \text{ for }\rho_\lambda \in V_7 \setminus V_6,\\
\omega_7(\rho_\lambda) &= \prod_{ \substack{ \lambda' \in V_7 \\ \lambda-\lambda' \in \Phi }}(\lambda-\lambda') \quad \text{ for }\rho_\lambda \in V_8 \setminus V_7,\\
\omega_8(\rho_\lambda) &= \prod_{ \substack{ \lambda' \in V_8 \\ \lambda-\lambda' \in \Phi }}(\lambda-\lambda') \quad \text{ for }\rho_\lambda \in V_{12} \setminus V_8,\\
\omega_{12}(\rho_\lambda) &= \prod_{ \substack{ \lambda' \in V_{12} \\ \lambda-\lambda' \in \Phi }}(\lambda-\lambda') \quad \text{ for } \rho_\lambda \in V(\G(EIII)) \setminus V_{12}.\\
\end{align*}
By Proposition \ref{adjacent}, we can see that
the subgraphs of $\G(EIII)$ induced by $V_4$, $V_6 \setminus V_4$, $V_7 \setminus V_6$, $V_8 \setminus V_7$, $V_{12} \setminus V_8$, and $V(\G(EIII)) \setminus V_{12}$
are complete graphs.

\begin{thm}
\label{generators of EIII}
The cohomology ring $H^*(\mathcal{G}(EIII))$ is generated by $\tau$ and $\omega$ as an $H^*(BT)$-algebra.
\end{thm}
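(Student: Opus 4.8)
The plan is to prove that the $H^*(BT)$-subalgebra $A\subseteq H^*(\mathcal{G}(EIII))$ generated by $\tau$ and $\omega$ is the whole ring, by peeling an arbitrary GKM function off vertex by vertex against a family of ``approximate Schubert classes'' lying in $A$. Since $\Phi(E_6)^+$ is pairwise relatively prime in $H^*(BT)$, Proposition \ref{Sc. basis} makes $H^*(\mathcal{G}(EIII))$ a free $H^*(BT)$-module on the Schubert basis $\{S^P_{\rho_\lambda}\}$, so it suffices to fix any total order $\rho_{\mu_1},\dots,\rho_{\mu_{27}}$ of the vertices and produce, for each $k$, an element $\Phi_k\in A$ with $\Phi_k(\rho_{\mu_j})=0$ for all $j<k$ and $\Phi_k(\rho_{\mu_k})$ equal to the product of the labels of the edges joining $\rho_{\mu_k}$ to the earlier vertices $\rho_{\mu_1},\dots,\rho_{\mu_{k-1}}$. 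Granting this, if $f$ is a GKM function and $\rho_{\mu_k}$ is the first vertex where $f$ does not vanish, then the GKM condition together with pairwise relative primality forces $\Phi_k(\rho_{\mu_k})$ to divide $f(\rho_{\mu_k})$ in $H^*(BT)$, so $f-\bigl(f(\rho_{\mu_k})/\Phi_k(\rho_{\mu_k})\bigr)\Phi_k\in A$ vanishes on $\rho_{\mu_1},\dots,\rho_{\mu_k}$; iterating yields $f\in A$.

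To build the $\Phi_k$ I would take the order whose initial segments are $V_4$, $V_6$, $V_7$, $V_8$, $V_{12}$, and finally $V(\mathcal{G}(EIII))$, ordering arbitrarily inside each ``layer'' $V_4$, $V_6\setminus V_4$, $V_7\setminus V_6$, $V_8\setminus V_7$, $V_{12}\setminus V_8$, $V(\mathcal{G}(EIII))\setminus V_{12}$. Two facts recorded above do the work: each layer induces a \emph{complete} subgraph of $\mathcal{G}(EIII)$, and, by Proposition \ref{adjacent}, the edge joining $\rho_\lambda$ and $\rho_{\lambda'}$ has label generated by $\lambda-\lambda'=\tau(\rho_{\lambda'})-\tau(\rho_\lambda)$ (recall $\tau=\bar t-\bar z$, so $\tau(\rho_\lambda)=\bar t-\lambda$, and these values are distinct on each layer). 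To a layer $L$ I attach a base function $B_L\in A$ vanishing on all earlier layers with $B_L(w)$ equal to the product of the labels of the edges from $w$ to earlier-layer vertices, for $w\in L$: for $L=V_4$ take $B_L=1$; for $L$ equal to $V_7\setminus V_6$, $V_8\setminus V_7$, $V_{12}\setminus V_8$, or $V(\mathcal{G}(EIII))\setminus V_{12}$ take $B_L$ equal respectively to $\omega_6$, $\omega_7$, $\omega_8$, or $\omega_{12}$, which lie in $A$ by definition and have exactly the vanishing loci and Schubert-type values listed above; and for $L=V_6\setminus V_4$ take for $B_L$ a suitable $H^*(BT)$-combination of $\omega$ with lower monomials in $\tau$, vanishing on $V_4$ and Schubert-like on $V_6\setminus V_4$. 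If $\mu_k$ is the $i$-th vertex $w$ of its layer $L$, I set $\Phi_k=B_L\cdot\prod_{j<i}\bigl(\tau-\tau(w_j)\bigr)$ with $w_1,\dots,w_i=w$ the first $i$ vertices of $L$; this lies in $A$, vanishes at every earlier vertex (earlier-layer vertices by $B_L$, earlier vertices of $L$ by the product), and evaluates at $w$ to $B_L(w)\prod_{j<i}(\tau(w)-\tau(w_j))$, which by completeness of $L$ is the product of the labels of all edges from $w$ back to the earlier vertices. Hence $A=H^*(\mathcal{G}(EIII))$.

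In reserve I would keep a shorter route via Proposition \ref{lifts of generators}: with $R_1=H^*(BT)$, $R_2=\Z$, $M_1=H^*(\mathcal{G}(EIII))\cong H^*_T(EIII)$ and $M_2\cong H^*(EIII)$ (freeness of $M_1$ from the Schubert basis, and collapse of the Serre spectral sequence of $EIII\to ET\times_T EIII\to BT$ identifying $M_2$ with $R_2\otimes_{R_1}M_1$), it remains only to check that the images $\bar\tau\in H^2(EIII)$ and $\bar\omega\in H^8(EIII)$ generate $H^*(EIII)$; since $H^*(EIII)$ is generated in degrees $2$ and $8$ by \cite[Corollary C]{TW} and $H^2(EIII)\cong\Z$, this comes down to $\bar\tau$ generating $H^2(EIII)$ and $\{\bar\tau^4,\bar\omega\}$ being a $\Z$-basis of $H^8(EIII)\cong\Z^2$ — two finite evaluations.

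The main obstacle is the computational bookkeeping compressed into the ``straightforward calculations'' above: establishing the chain $V_4\subset V_6\subset V_7\subset V_8\subset V_{12}$ of vanishing loci and the Schubert-type values of $\omega_6,\omega_7,\omega_8,\omega_{12}$, and — the one ingredient genuinely not yet recorded — producing the base function for the layer $V_6\setminus V_4$, i.e.\ a polynomial in $\tau$ and $\omega$ over $H^*(BT)$ vanishing on $V_4$ and equal to the appropriate product of edge labels on each vertex of $V_6\setminus V_4$. Confirming, vertex by vertex, that each $\Phi_k$ really evaluates to the full product of labels back to the earlier vertices, with no factor missing or spurious, is where the explicit tables for the $\rho_\lambda$'s and for $\omega$ in the appendix are needed.
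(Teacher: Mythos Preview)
Your approach is essentially the paper's own: order the $27$ vertices so that the initial segments are $V_4\subset V_6\subset V_7\subset V_8\subset V_{12}\subset V(\mathcal{G}(EIII))$, attach to each layer a base function in the subalgebra generated by $\tau$ and $\omega$ that vanishes on the earlier layers with Schubert-type values on its own layer, then multiply by successive linear factors $\bar z-\lambda_j$ (your $\tau-\tau(w_j)$, up to sign) within a layer and peel any GKM function against the resulting triangular family. The paper carries this out verbatim; the concrete sequences of vertices and of $\varphi_k$'s are exactly those of Table~\ref{EIII seq.}.

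The one place you flag as ``not yet recorded''---the base function for $V_6\setminus V_4$---is in fact already in hand: the paper takes $B_L=\omega$ itself. Table~\ref{omega} shows $\omega$ vanishes on $V_4$, and for $\lambda=-e_1-e_2-e_n+\tfrac12\sum e_i+\tfrac14\bar t\in V_6\setminus V_4$ one checks $\omega(\rho_\lambda)=(e_1+e_2)(e_2+e_n)(e_n+e_1)(\bar t-\lambda)$, which is precisely the product of the four edge labels from $\rho_\lambda$ back into $V_4$ (the four adjacent $\lambda'\in V_4$ are $\bar t$ and $-e_m+\tfrac12\sum e_i+\tfrac14\bar t$ for $m\in\{1,2,n\}$). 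So no further $H^*(BT)$-correction by lower powers of $\tau$ is needed. Your alternative via Proposition~\ref{lifts of generators} is also valid but is not the route the paper takes here.
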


\begin{proof}
It is sufficient to show that the cohomology ring $H^*(\mathcal{G}(EIII))$ is generated
by $\tau$, $\omega$, $\omega_6$, $\omega_7$, $\omega_8$, and $\omega_{12}$ as an $H^*(BT)$-algebra.
The proof is similar to the proof of Proposition \ref{Sc. basis}.
Assume that we have a sequence $(v_1, \ldots, v_{27})$ of vertices of $\G(EIII)$
and a sequence $(\varphi_1=1, \ldots, \varphi_{27})$ of GKM functions on $\G(EIII)$ such that
\[
\{v_1, \ldots, v_{27}\}= V(\G(EIII)), \quad \varphi_j(v_i) = 0 \ (i < j), \quad \varphi_j(v_j)= \prod \alpha(e),
\]
where $\alpha$ is the axial function $E(\G(EIII)) \to H^*(BT)$ and
the product is taken over all edges $e$ between $v_i$ and $v_j$ for some $i < j$.
We show that any GKM function $f$ on $\G(EIII)$ is a linear combination of $\varphi_j$'s over $H^*(BT)$ by induction.
The GKM function $f-f(v_1)\varphi_1$ vanishes on $v_1$.
Assume that $f$ vanishes on $\{ v_i \suchthat i < j \}$.
Since the labels of the edges meeting at a vertex are distinct
and $\Phi^+$ is pairwise relatively prime,
the GKM condition says that $f(v_j)$ is a multiple of $\varphi_j(v_j)$,
that is, $f(v_j) = F_j\varphi_j(v_j)$ for some $F_j \in H^*(BT)$.
Hence $f-F_j\varphi_j$ vanishes on $\{ v_i \suchthat i \leq j \}$.
Therefore the induction proceeds, and
Table \ref{EIII seq.} shows the existence of such sequences of vertices and GKM functions
(see Appendix \ref{appendix}).
\end{proof}
By Theorem \ref{generators of EIII},
we have a surjective $H^*(BT)$-algebra homomorphism
\[
\Xi \colon H^*(BT)[\tau, \omega] \to H^*(\G(EIII)).
\]

Next let us consider relations of $\tau$, and $\omega$.
To find them
we need the following results of Toda and Watanabe.
\begin{prop}[{\cite[Corollary C]{TW}}]
\label{H^*(EIII)}
$H^*(EIII) = \Z[\tau', \omega']/(\tau'^9-3\omega'^2\tau', \omega'^3+15\omega'^2\tau'^4-9\omega'\tau'^8)$.
\end{prop}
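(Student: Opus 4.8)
Although this proposition is quoted from \cite[Corollary C]{TW}, let me indicate how one would prove it; in what follows only the shape of the answer---two ring generators, in real degrees $2$ and $8$, and two relations, in real degrees $18$ and $24$---together with the precise integral coefficients, is needed. Rationally one has Borel's presentation $H^*(EIII;\Q) \cong \Q[\mathfrak{t}^*]^{W(D_5)}/\bigl(\Q[\mathfrak{t}^*]^{W(E_6)}_{+}\bigr)$, with $\mathfrak{t}^*$ placed in real degree $2$. Here $\Q[\mathfrak{t}^*]^{W(D_5)}$ is a polynomial ring on generators of complex degrees $1,2,4,5,6,8$---the coordinate fixed by $W(D_5)$, together with the degrees of $W(D_5)$---while $\Q[\mathfrak{t}^*]^{W(E_6)}$ is polynomial on generators of complex degrees $2,5,6,8,9,12$. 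Using the $W(E_6)$-invariants of degrees $2,5,6,8$ to eliminate the $W(D_5)$-generators of those same degrees, one by one, shows that $H^*(EIII;\Q)$ is generated by the degree-$1$ generator, a class $\tau'$ of real degree $2$, and by (the image of) a degree-$4$ generator, a class $\omega'$ of real degree $8$; the images of the remaining degree-$9$ and degree-$12$ $W(E_6)$-invariants then furnish two relations among $\tau'$ and $\omega'$, in real degrees $18$ and $24$.

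The plan is now: (i) choose integral classes $\tau' \in H^2(EIII;\Z)$ and $\omega' \in H^8(EIII;\Z)$ and compute the exact relations they satisfy; (ii) compare ranks. For (i), recall that $EIII$ has a cell decomposition by Schubert cells indexed by the minimal length representatives of $W(E_6)/W(D_5)$, all of even real dimension, so $H^*(EIII;\Z)$ is a free $\Z$-module; take $\tau'$ to be the positive Schubert generator of $H^2$ and $\omega'$ a suitable class in $H^8$. That $(\tau')^9 = 3(\omega')^2\tau'$ and $(\omega')^3 = 9\omega'(\tau')^8 - 15(\omega')^2(\tau')^4$, and that $\tau'$ and $\omega'$ generate $H^*(EIII;\Z)$ over $\Z$ and not merely over $\Q$, is checked by expanding both sides in the Schubert bases of $H^{18}$ and $H^{24}$ via iterated application of the Chevalley multiplication formula---equivalently, by tracking the $W(E_6)$-invariants in an explicit integral basis of $\Z[\mathfrak{t}^*]^{W(D_5)}$, which is essentially the route of \cite{TW}. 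This is the computational heart of the argument, and the step I expect to be the main obstacle: everything afterwards is formal once these coefficients and the integral generation are in hand.

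For (ii), set $A = \Z[\tau',\omega']/\bigl((\tau')^9 - 3(\omega')^2\tau',\ (\omega')^3 + 15(\omega')^2(\tau')^4 - 9\omega'(\tau')^8\bigr)$, graded by $|\tau'| = 2$ and $|\omega'| = 8$. The Poincar\'e polynomial of $EIII$ is $\sum_{w \in W(E_6)/W(D_5)} q^{2 l(w)}$, and the standard quotient of Poincar\'e polynomials of Weyl groups gives $\sum_{w} q^{l(w)} = [9]_q\,(1 + q^4 + q^8) = (1 + q + \cdots + q^8)(1 + q^4 + q^8)$, whose coefficients are exactly the degrees, counted with multiplicity, of the $27$ monomials $(\tau')^a(\omega')^b$ with $0 \le a \le 8$ and $0 \le b \le 2$. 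It therefore suffices to show, for every field $k$ and every $n$, that $\dim_k (A \otimes k)_n$ equals the $n$-th Betti number of $EIII$, for then each graded piece $A_n$ is $\Z$-free of that rank. Over $\Q$ this follows from the surjection $A \otimes \Q \twoheadrightarrow H^*(EIII;\Q)$ of the previous paragraph together with the bound $\dim_\Q (A \otimes \Q) \le 27$ obtained by reducing every monomial modulo the two relations; over $\mathbb{F}_p$ with $p \ne 3$ the coefficient $3$ is invertible and the same reduction applies; over $\mathbb{F}_3$ the relations degenerate to $(\tau')^9$ and $(\omega')^3$, and $\mathbb{F}_3[\tau',\omega']/\bigl((\tau')^9,(\omega')^3\bigr)$ manifestly has dimension $27$ with the same graded pieces. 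Hence $A$ is a graded free $\Z$-module with the same graded ranks as $H^*(EIII;\Z)$, and the tautological graded ring homomorphism $A \to H^*(EIII;\Z)$ sending $\tau' \mapsto \tau'$ and $\omega' \mapsto \omega'$---surjective because $\tau'$ and $\omega'$ generate---is a surjection of graded free $\Z$-modules of equal finite rank in each degree, hence an isomorphism.
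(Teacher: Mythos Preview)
The paper does not prove this proposition at all; it is quoted verbatim from Toda--Watanabe \cite[Corollary~C]{TW} and used as external input for the subsequent equivariant computation (in particular, in the proof of Proposition~\ref{isom. of EIII} a change of variables $\tau\mapsto -\tau'$, $\omega\mapsto -\omega'+\tau'^4$ matches the paper's non-equivariant relations $\bar r_9,\bar r_{12}$ against the Toda--Watanabe relations). Your sketch is a correct outline of how such a result is established and is essentially the route taken in \cite{TW}: determine generators and the shape of the relations rationally via the Borel presentation, then pin down integral generators and the exact integer coefficients by explicit calculation in the Schubert basis. Your acknowledgement that this explicit step is the computational heart is exactly right.

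One small point in your part~(ii): the phrase ``reducing every monomial modulo the two relations'' to obtain $\dim_k(A\otimes k)\le 27$ is not quite immediate, since the first relation trades $\tau'^9$ for a multiple of $\omega'^2\tau'$ while the second trades $\omega'^3$ for terms with higher powers of $\tau'$, so a naive rewriting can cycle. The clean way to phrase it is that the two relations form a regular sequence in $k[\tau',\omega']$ for every field $k$: writing $f=\tau'(\tau'^8-3\omega'^2)$ and $g=\omega'(\omega'^2+15\omega'\tau'^4-9\tau'^8)$, a nonzero common zero in $\bar k$ would force $26\omega'=15\tau'^4$ and then $676\cdot\tau'^8=675\cdot\tau'^8$, which is impossible in every characteristic (the cases $p=2,13$ are handled separately, and over $\mathbb{F}_3$ the relations degenerate to $\tau'^9,\omega'^3$). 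Hence the Hilbert series of $A\otimes k$ is
\[
\frac{(1-t^9)(1-t^{12})}{(1-t)(1-t^4)}=(1+t+\cdots+t^8)(1+t^4+t^8)
\]
for every $k$, and the rest of your argument goes through unchanged.
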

Recall that, by Theorem \ref{HHH}, we have 
\[
H^*_T(EIII) \cong H^*(\G(EIII)).
\]
By Proposition \ref{H^*(EIII)},
we expect $H^*(\G(EIII))$ has similar relations.
Indeed, we can obtain them as follows.
Consider two linear combinations
of all monomials in $t_i$'s, $x$, $\tau$, and $\omega$ of degree $18$ or $24$ separately,
evaluate them at all vertex of $EIII$,
assume that all the values are $0$,
and solve the simultaneous linear equations.
\begin{prop}
\label{relations of EIII}
The kernel of $\Xi \colon H^*(BT)[\tau, \omega] \to H^*(\G(EIII))$ is generated by $r_9$ and $r_{12}$
(see Appendix \ref{appendix}).
\end{prop}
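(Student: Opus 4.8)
The plan is to apply the ``change of rings'' machinery of Subsection~2.2, in the form of Proposition~\ref{images of relations} and Proposition~\ref{lifts of relations}, to transfer the known relations of $H^*(EIII)$ (Proposition~\ref{H^*(EIII)}) up to $H^*(\G(EIII)) \cong H^*_T(EIII)$ and then back down, thereby pinning down $\mathrm{Ker}\,\Xi$ exactly. Concretely, set $R_1 = H^*(BT)$, $R_2 = \Z$ (with $\psi \colon R_1 \to R_2$ the augmentation sending every $t_i$ and $x$ of positive degree to $0$), $M_1 = H^*(\G(EIII))$, and $M_2 = H^*(EIII)$. The fiber bundle $EIII \to ET\times_T EIII \to BT$ gives $M_2 \cong \Z \otimes_{H^*(BT)} M_1$, and $M_1$ is a free $H^*(BT)$-module (for instance because the equivariant Schubert classes $S^P_w$ form a basis, by Proposition~\ref{Sc. basis}); thus both hypotheses~(1) and~(2) of Subsection~2.2 hold, and $\psi$ manifestly induces $R_1/R_1^+ \cong \Z \cong R_2/R_2^+$, so the graded setting of Proposition~\ref{lifts of relations} applies as well.

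Next I would match up generators and the two polynomial presentations. Under $\psi^*$, the class $\tau$ maps to a degree-$2$ generator $\tau'$ of $H^*(EIII)$ and $\omega$ maps to a degree-$8$ generator $\omega'$; since by Theorem~\ref{generators of EIII} the pair $\tau,\omega$ generates $H^*(\G(EIII))$ over $H^*(BT)$, Proposition~\ref{lifts of generators} (or just the surjectivity of $\Xi$ already recorded after that theorem) confirms $\tau',\omega'$ generate $H^*(EIII)$, consistent with Proposition~\ref{H^*(EIII)}. Now let $\Xi_1 = \Xi \colon H^*(BT)[\tau,\omega] \to H^*(\G(EIII))$ and $\Xi_2 \colon \Z[\tau',\omega'] \to H^*(EIII)$ be the presentation of Proposition~\ref{H^*(EIII)}, so $\mathrm{Ker}\,\Xi_2$ is generated by $\rho_9 := \tau'^9 - 3\omega'^2\tau'$ and $\rho_{12} := \omega'^3 + 15\omega'^2\tau'^4 - 9\omega'\tau'^8$. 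By Proposition~\ref{lifts of relations}, to prove $\mathrm{Ker}\,\Xi$ is generated by two elements it suffices to exhibit \emph{lifts} $r_9, r_{12} \in H^*(BT)[\tau,\omega]$ of $\rho_9, \rho_{12}$ through $\psi^* \colon H^*(BT)[\tau,\omega] \to \Z[\tau',\omega']$ that actually lie in $\mathrm{Ker}\,\Xi$; any such pair then automatically generates $\mathrm{Ker}\,\Xi$. (Here a lift means: $r_9 \equiv \tau^9 - 3\omega^2\tau$ and $r_{12} \equiv \omega^3 + 15\omega^2\tau^4 - 9\omega\tau^8$ modulo the ideal generated by positive-degree elements of $H^*(BT)$.)

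It remains to produce these two explicit lifts. The naive candidates $\tau^9 - 3\omega^2\tau$ and $\omega^3 + 15\omega^2\tau^4 - 9\omega\tau^8$ need not already be GKM-function identities on $\G(EIII)$, but they differ from genuine relations only by terms with positive-degree $H^*(BT)$-coefficients; so I would correct them by adding such lower ``error'' terms, i.e.\ write $r_9$ and $r_{12}$ as the stated leading polynomials plus $H^*(BT)^+$-linear combinations of monomials in $\tau,\omega,t_0,\dots,t_5,x$ of the appropriate degrees ($18$ and $24$), with undetermined coefficients. Evaluating at all $27$ vertices of $\G(EIII)$ — using Proposition~\ref{adjacent} and the explicit tables for $\bar z$, $\tau$, $\omega$ in the Appendix — turns ``$r_9, r_{12} \in \mathrm{Ker}\,\Xi$'' into a finite system of linear equations over $\Z$ for those coefficients, exactly as flagged in the paragraph preceding the statement (``evaluate them at all vertices\ldots solve the simultaneous linear equations''). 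Solving that system yields the explicit $r_9, r_{12}$ listed in the Appendix, and Proposition~\ref{lifts of relations} finishes the proof. The main obstacle is purely computational: verifying that the linear system is consistent (equivalently, that some lift into $\mathrm{Ker}\,\Xi$ exists) and extracting the correction terms; this is the calculation delegated to the Appendix, and its feasibility rests on the freeness of $H^*(\G(EIII))$ over $H^*(BT)$ together with the known rank/Poincaré-series agreement with $H^*(EIII)\otimes H^*(BT)$, which guarantees the two lifted relations are enough.
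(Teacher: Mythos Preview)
Your overall strategy via Proposition~\ref{lifts of relations} is sound and close in spirit to the paper, but there is a concrete gap: you silently identify the images of the GKM generators $\tau,\omega$ in $H^*(EIII)$ with Toda--Watanabe's generators $\tau',\omega'$ from Proposition~\ref{H^*(EIII)}, and they are \emph{not} the same. Indeed, from the Appendix one reads off
\[
\bar r_9 = 2\tau^9 - 6\tau^5\omega + 3\tau\omega^2,\qquad
\bar r_{12} = \omega^3 - 18\tau^4\omega^2 + 24\tau^8\omega - 7\tau^{12},
\]
whereas you claim the lifts should satisfy $r_9 \equiv \tau^9 - 3\omega^2\tau$ and $r_{12} \equiv \omega^3 + 15\omega^2\tau^4 - 9\omega\tau^8$ modulo $H^*(BT)^+$. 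In degree~$18$ the ideal $(\bar r_9,\bar r_{12})\subset\Z[\tau,\omega]$ is just $\Z\cdot\bar r_9$, and $\tau^9-3\tau\omega^2$ is not an integer multiple of $2\tau^9 - 6\tau^5\omega + 3\tau\omega^2$; hence there is \emph{no} element of $\ker\Xi$ with the leading part you prescribe, and the linear system you set up would be inconsistent. What is missing is exactly the content of Proposition~\ref{isom. of EIII}: the change of variables $\tau\mapsto -\tau'$, $\omega\mapsto -\omega'+\tau'^4$ that matches your presentation $\Xi_2$ to Toda--Watanabe's, so that you actually know $\bar r_9,\bar r_{12}$ generate $\ker\Xi_2$.

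Once that identification is supplied, your route via Proposition~\ref{lifts of relations} does work and is a legitimate alternative to the paper's argument. The paper instead proceeds by (i) finding $r_9,r_{12}\in\ker\Xi$ computationally, (ii) proving $\Z[\tau,\omega]/(\bar r_9,\bar r_{12})\cong H^*(EIII)$ via the explicit isomorphism of Proposition~\ref{isom. of EIII}, (iii) invoking Proposition~\ref{free} to deduce $H^*(BT)[\tau,\omega]/(r_9,r_{12})$ is $H^*(BT)$-free, and (iv) concluding that the surjection onto $H^*(\G(EIII))$ is a map of free $H^*(BT)$-modules of equal graded rank, hence an isomorphism. Your use of Proposition~\ref{lifts of relations} would replace steps (iii)--(iv), but step~(ii) is unavoidable in either approach.
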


By Proposition \ref{relations of EIII},
the natural surjection
\[
\Xi \colon H^*(BT)[\tau, \omega] \to H^*(\G(EIII))
\]
factors through
\[
H^*(BT)[\tau, \omega]/(r_9, r_{12}) \to H^*(\G(EIII)).
\]

Finally, we will show that the above $H^*(BT)$-algebra homomorphism
\[
H^*(BT)[\tau, \omega]/(r_9, r_{12}) \to H^*(\G(EIII))
\]
is an isomorphism.
Consider the natural surjection
\[
H^*(BT)[\tau, \omega]/(r_9, r_{12}) \to \Z[\tau, \omega]/(\bar{r}_9, \bar{r}_{12}),
\]
where $\bar{r}_9 = 2\tau^9 - 6\tau^5\omega + 3\tau\omega^2$ and
$\bar{r}_{12} = \omega^3 -18\tau^4\omega^2 +24\tau^8\omega -7\tau^{12}$.
\begin{prop}
\label{isom. of EIII}
We have $\Z[\tau, \omega]/(\bar{r}_9, \bar{r}_{12}) \cong H^*(EIII)$.
\end{prop}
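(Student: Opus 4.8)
The plan is to deduce the statement directly from Proposition~\ref{H^*(EIII)} by exhibiting an explicit change of variables, so that everything reduces to checking two polynomial identities. Write the Toda--Watanabe presentation as $H^*(EIII)=\Z[\tau',\omega']/J$ with $|\tau'|=2$, $|\omega'|=8$, and $J=(\tau'^9-3\omega'^2\tau',\ \omega'^3+15\omega'^2\tau'^4-9\omega'\tau'^8)$. It suffices to produce a graded ring isomorphism $\phi\colon\Z[\tau',\omega']\to\Z[\tau,\omega]$ with $\phi(J)=(\bar{r}_9,\bar{r}_{12})$; such a $\phi$ then descends to a ring isomorphism of the quotients, which is exactly the assertion.

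To guess $\phi$: a bijective degree-preserving homomorphism between these polynomial rings must send $\tau'\mapsto\pm\tau$ and $\omega'\mapsto\pm\omega+c\,\tau^4$ for some $c\in\Z$ (in each ring the degree-$8$ part is free of rank $2$ on $\tau^4$ and $\omega$, which rules out any further rescaling of $\omega$). Comparing the image of $\tau'^9-3\omega'^2\tau'$ with $\bar{r}_9=2\tau^9-6\tau^5\omega+3\tau\omega^2$ forces the signs and $c=-1$; accordingly I would take
\[
\phi\colon\Z[\tau',\omega']\longrightarrow\Z[\tau,\omega],\qquad \tau'\longmapsto-\tau,\quad \omega'\longmapsto\tau^4-\omega,
\]
which is visibly an isomorphism, with inverse $\tau\mapsto-\tau'$, $\omega\mapsto\tau'^4-\omega'$.

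It then remains to verify the two identities
\[
\phi(\tau'^9-3\omega'^2\tau')=\bar{r}_9,\qquad
\phi(\omega'^3+15\omega'^2\tau'^4-9\omega'\tau'^8)=-\bar{r}_{12},
\]
which are routine expansions: for the first one uses $(-\tau)^9=-\tau^9$ and expands $3\tau(\tau^4-\omega)^2$; for the second one substitutes $\omega'=\tau^4-\omega$, $\tau'^4=\tau^4$, $\tau'^8=\tau^8$ and collects the coefficients of $\omega^3$, $\tau^4\omega^2$, $\tau^8\omega$, $\tau^{12}$. Since $(\bar{r}_{12})=(-\bar{r}_{12})$ as ideals, these identities show $\phi(J)=(\bar{r}_9,\bar{r}_{12})$, and hence $H^*(EIII)\cong\Z[\tau,\omega]/(\bar{r}_9,\bar{r}_{12})$. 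The only genuine difficulty here is spotting the correct substitution; once its shape is pinned down by the grading, the remaining sign and coefficient matching is mechanical.
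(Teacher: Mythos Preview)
Your proof is correct and takes essentially the same approach as the paper: both exhibit an explicit graded change of variables matching the two presentations. The paper defines the map in the opposite direction, $f\colon \Z[\tau,\omega]\to\Z[\tau',\omega']$ with $\tau\mapsto -\tau'$, $\omega\mapsto -\omega'+\tau'^4$, which is precisely the inverse of your $\phi$; the two verifications of how the relations transform are therefore the same computation read in reverse.
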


\begin{proof}
By Proposition \ref{H^*(EIII)}, we have
\[
H^*(EIII) = \Z[\tau', \omega']/(\tau'^9-3\omega'^2\tau', \omega'^3+15\omega'^2\tau'^4-9\omega'\tau'^8).
\]
We define a ring homomorphism $f \colon \Z[\tau, \omega] \to \Z[\tau',\omega']$
by $\tau \mapsto -\tau'$, $\omega \mapsto -\omega'+\tau'^4$.
In fact, $f$ sends $\bar{r}_9$ to $\tau'^9-3\omega'^2\tau'$ and $-\bar{r}_{12}$ to $\omega'^3+15\omega'^2\tau'^4-9\omega'\tau'^8$.
Hence $f$ induces an isomorphism between $\Z[\tau, \omega]/(\bar{r}_9, \bar{r}_{12})$ and $H^*(EIII)$.
\end{proof}

Let us show that $\Z[\tau, \omega]/(\bar{r}_9, \bar{r}_{12})$ is a free $H^*(BT)$-module
in the following setting.
Let $R_1$ be a non-negatively graded ring,
$R_2 \cong R_1/R_1^+$,
$\psi \colon R_1 \to R_2$ a natural surjective ring homomorphism
with a section $s$,
$\{ x_\lambda \}_{\lambda \in \Lambda}$ are indeterminates of positive degree,
$\{ r_\theta \}_{\theta \in \Theta} \subset R_1[x_\lambda \suchthat{\lambda \in \Lambda}]$
a subset of homogeneous elements,
$\psi^*$ the homomorphism $R_1[x_\lambda \suchthat{\lambda \in \Lambda}] \to R_2[x_\lambda \suchthat{\lambda \in \Lambda}]$
induced by $\psi \colon R_1 \to R_2$.
\begin{prop}
\label{free}
If $R_2[x_\lambda \suchthat{\lambda \in \Lambda}]/(\psi^*(r_\theta) \suchthat {\theta \in \Theta})$
is a free $R_2$-module,
then $R_1[x_\lambda \suchthat{\lambda \in \Lambda}]/(r_\theta \suchthat {\theta \in \Theta})$
is a free $R_1$-module.
\end{prop}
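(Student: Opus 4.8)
The plan is to produce an explicit homogeneous $R_1$-basis of $M_1 := R_1[x_\lambda \suchthat \lambda \in \Lambda]/(r_\theta \suchthat \theta \in \Theta)$ by lifting one from $M_2 := R_2[x_\lambda \suchthat \lambda \in \Lambda]/(\psi^*(r_\theta) \suchthat \theta \in \Theta)$, and then running twice the graded-Nakayama argument already used for Propositions~\ref{lifts of generators} and~\ref{lifts of relations}. First I would record the base-change identity $M_2 \cong R_2 \otimes_{R_1} M_1 \cong M_1/R_1^+ M_1$: the map $\psi^* \colon R_1[x_\lambda] \to R_2[x_\lambda]$ is surjective with kernel $R_1^+ \cdot R_1[x_\lambda]$, so applying $R_2 \otimes_{R_1} \cdot$ to $0 \to (r_\theta) \to R_1[x_\lambda] \to M_1 \to 0$ gives $R_2[x_\lambda] \to M_1 \otimes_{R_1} R_2 \to 0$ in which the image of $(r_\theta)$ is precisely $(\psi^*(r_\theta))$, exactly as in the proof of Proposition~\ref{images of relations}.

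Next, choose a homogeneous $R_2$-basis $\{\bar{b}_i\}_{i \in I}$ of the graded free $R_2$-module $M_2$, and lift each $\bar{b}_i$ to a homogeneous $b_i \in M_1$ of the same degree $d_i$. Since the images of the $b_i$ generate $M_1/R_1^+M_1 = M_2$ over $R_2$, the graded Nakayama lemma (cf.~\cite[Lemma~13.4]{P}) shows that $\{b_i\}_{i \in I}$ generates $M_1$ over $R_1$; equivalently, the graded $R_1$-linear map $\pi \colon F \to M_1$ from the free graded $R_1$-module $F$ on generators $e_i$ of degree $d_i$, given by $\pi(e_i) = b_i$, is surjective.

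It remains to show $K := \ker \pi = 0$. Tensoring $0 \to K \to F \xrightarrow{\pi} M_1 \to 0$ with $R_2$ over $R_1$ yields an exact sequence
\[
\mathrm{Tor}_1^{R_1}(R_2, M_1) \to K \otimes_{R_1} R_2 \to F \otimes_{R_1} R_2 \xrightarrow{\bar{\pi}} M_1 \otimes_{R_1} R_2 \to 0,
\]
in which $\bar{\pi}$ is the map from the free $R_2$-module on $\{e_i\}$ to $M_2$ sending $e_i \mapsto \bar{b}_i$, hence an isomorphism. Therefore the image of $K \otimes_{R_1} R_2$ in $F \otimes_{R_1} R_2$ is zero; granting the vanishing $\mathrm{Tor}_1^{R_1}(R_2, M_1) = 0$, this forces $K \otimes_{R_1} R_2 = K/R_1^+ K = 0$, so $K = 0$ by the graded Nakayama lemma applied to the non-negatively graded module $K \subset F$. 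Then $\pi$ is an isomorphism and $M_1 \cong F$ is $R_1$-free, with basis $\{b_i\}_{i \in I}$.

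The nontrivial input, and the step I expect to be the main obstacle, is the vanishing $\mathrm{Tor}_1^{R_1}(R_2, M_1) = 0$ — this is where $R_2$-freeness of $M_2$ is genuinely used. I would obtain it through the regular-sequence picture: $R_1[x_\lambda]$ is free over $R_1$, so flat base change along $R_1 \to R_1[x_\lambda]$ identifies $\mathrm{Tor}_1^{R_1}(R_2, M_1)$ with $\mathrm{Tor}_1^{R_1[x_\lambda]}(R_2[x_\lambda], M_1)$; if the $r_\theta$ form a regular sequence on $R_1[x_\lambda]$, then the Koszul complex $K_\bullet(r_\theta; R_1[x_\lambda])$ is a free resolution of $M_1$, and reducing it modulo $R_1^+$ produces $K_\bullet(\psi^*(r_\theta); R_2[x_\lambda])$, whose positive-degree homology vanishes exactly when $\psi^*(r_\theta)$ is regular on $R_2[x_\lambda]$. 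That last property is forced by $M_2 = R_2[x_\lambda]/(\psi^*(r_\theta))$ being $R_2$-free of finite rank: in the intended application $R_2 = \Z$ and $M_2 \cong H^*(EIII)$ has finite $\Z$-rank, so $\psi^*(r_\theta)$ is a system of parameters in the Cohen--Macaulay ring $R_2[x_\lambda]$, hence a regular sequence, and a flatness argument over $R_2$ promotes this to regularity of the $r_\theta$ on $R_1[x_\lambda]$. (Equivalently, when $R_2$ is a PID one may argue by Hilbert series: the same regular-sequence computation shows $F$ and $M_1$ have equal Hilbert series, so $K$ is a torsion $R_2$-submodule of the $R_2$-free module $F$ and hence $0$.) Making this Koszul bookkeeping precise — extracting from $R_2$-freeness of $M_2$ the fact that the Koszul complex over $R_1[x_\lambda]$ is a resolution — is the technical heart; the rest is the routine ``lift a basis, apply graded Nakayama twice'' pattern.
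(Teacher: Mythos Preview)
Your framework---lift a homogeneous $R_2$-basis of $M_2$ to $M_1$, use graded Nakayama to get spanning, then analyze the kernel $K$ of $F\to M_1$ and try to kill it by a second Nakayama---is exactly the paper's strategy.  The paper obtains spanning by a hands-on descent on $x$-degree using the section $s$ rather than by invoking Nakayama, but this is cosmetic.  For linear independence the paper tensors $0\to K\to F\to M_1\to 0$ with $R_2$, observes that $F\otimes_{R_1}R_2\to M_2$ is an isomorphism, and then \emph{asserts} $R_2\otimes_{R_1}K=0$.  That is precisely the $\mathrm{Tor}_1^{R_1}(R_2,M_1)=0$ step you singled out as the main obstacle, and the paper does not justify it.

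You are right to be suspicious: the proposition is false as stated, and the missing $\mathrm{Tor}$ vanishing is exactly where it breaks.  Take $R_1=\Z[t]$ with $|t|=2$, $R_2=\Z$, a single variable $x$ with $|x|=1$, and $r_1=x^2$, $r_2=x^2+t$.  Then $\psi^*(r_1)=\psi^*(r_2)=x^2$, so $M_2=\Z[x]/(x^2)$ is $\Z$-free of rank $2$, while $M_1=\Z[t,x]/(x^2,x^2+t)=\Z[t,x]/(x^2,t)\cong(\Z[t]/t)^{\oplus 2}$ is not $\Z[t]$-free.  Here $K=tF$ and $R_2\otimes_{R_1}K\cong\Z^2\ne 0$, though its image in $R_2\otimes_{R_1}F$ is zero.

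Your proposed repair via Koszul complexes is the correct salvage for the paper's application, but it imports a hypothesis (that $\{r_\theta\}$ is a finite homogeneous sequence whose reduction $\{\psi^*(r_\theta)\}$ is regular in $R_2[x_\lambda]$) that is not part of the stated proposition.  With that hypothesis your argument is clean: if $\psi^*(r_\theta)$ is regular then the Koszul complex $K_\bullet(\psi^*(r_\theta);R_2[x_\lambda])=K_\bullet(r_\theta;R_1[x_\lambda])\otimes_{R_1}R_2$ is acyclic in positive degrees; since $K_\bullet(r_\theta;R_1[x_\lambda])$ is a bounded-below complex of graded free $R_1$-modules, universal coefficients plus graded Nakayama force $H_i(K_\bullet(r_\theta))=0$ for $i>0$, so $r_\theta$ is regular in $R_1[x_\lambda]$ and $M_1$ has a free Koszul resolution, giving $\mathrm{Tor}_1^{R_1}(R_2,M_1)=0$.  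This is what actually proves the case $\{r_9,r_{12}\}$ used in the paper; your diagnosis of where the work lies is sharper than the paper's own proof.
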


\begin{proof}
Fix $R_2$-basis $\{ b_i \}_{i \in I}$ of $R_2[x_\lambda \suchthat{\lambda \in \Lambda}]/(r_\theta \suchthat {\theta \in \Theta})$.
Let $f \in R_2[x_\lambda \suchthat{\lambda \in \Lambda}]$
be a representative of an element which is not expressed as an $R_1$-linear combination of $s^*(b_i)$'s.
Let us consider the degree of the highest terms of $f$ with respect to $x_\lambda$'s.
Assume that $f$ is a minimal element with respect to this degree.
However, if $f$ is not $0$,
the highest monomials in $x_\lambda$'s are expressed
as $R_2$-linear combinations of $b_i$'s.
Then, applying $s^*$, the highest monomials are expressed as $R_1$-linear combinations of $s^*(b_i)$'s.
It contradicts the minimality of $f$.
Therefore $s^*(b_i)$'s span $R_1[x_\lambda \suchthat{\lambda \in \Lambda}]/(r_\theta \suchthat {\theta \in \Theta})$.

Let us show that $s^*(b_i)$'s are $R_1$-linearly independent.
Consider the following exact sequence
\[
K \to \bigoplus_i R_1 s^*(b_i) \to R_1[x_\lambda \suchthat{\lambda \in \Lambda}]/(r_\theta \suchthat {\theta \in \Theta}) \to 0,
\]
where $K$ is the kernel.
Applying the right exact functor $R_2 \otimes_{R_1} \cdot$ to this exact sequence,
we obtain the exact sequence
\[
R_2 \otimes_{R_1} K \to \bigoplus_i R_2 b_i \to R_2[x_\lambda \suchthat{\lambda \in \Lambda}]/(\psi^*(r_\theta) \suchthat {\theta \in \Theta}) \to 0.
\]
Since $b_i$'s are $R_2$-basis of $R_2[x_\lambda \suchthat{\lambda \in \Lambda}]/(\psi^*(r_\theta) \suchthat {\theta \in \Theta})$,
we have $K/KR_1^+ \cong R_2 \otimes_{R_1} K = 0$.
By the graded Nakayama's lemma, $K=0$.
\end{proof}

By Proposition \ref{Sc. basis},
$H^*(\G(EIII))$ is a free $H^*(BT)$-module,
and then $H^*(EIII)$ is a free abelian group.
Thus, by Proposition \ref{isom. of EIII},
$\Z[\tau, \omega]/(\bar{r}_9, \bar{r}_{12})$ is also a free abelian group.
By Proposition \ref{free},
$H^*(BT)[\tau, \omega]/(r_9, r_{12})$ is a free $H^*(BT)$-module.
Hence the surjection
\[
H^*(BT)[\tau, \omega]/(r_9, r_{12}) \to H^*(\G(EIII))
\]
is an $H^*(BT)$-module isomorphism,
and then an $H^*(BT)$-algebra isomorphism.

\subsection{The fiber GKM graph}
\label{subsec. fiber}
The fiber GKM graph is the GKM subgraph induced by $W(D_5) \subset W(E_6)$.
Let $T^5$ be the standard maximal torus of $\mathrm{SO}(10)$
and $\{ t_i' \suchthat 1 \leq i \leq 5\}$ the standard basis of $H^2(BT^5)$.
We define some GKM functions as follows:
\begin{align*}
z'_i(w) & = w(t'_i) \quad \text{for } w \in W(D_5),\\
\gamma'_j & = \frac{1}{2}(c_j(\mathbf{z}')-c_j(\mathbf{t}')),
\end{align*}
where $\mathbf{z}' = \{ z_1',\ldots, z_5'\}$ and $\mathbf{t}' = \{ t_1',\ldots, t_5'\}$.
According to \cite{FIM},
the cohomology $H^*(\G(\mathrm{SO}(10)/T^5))$ are given as follows.
\begin{thm}[{\cite[Theorem 6.1]{FIM}}]
Let $t'_i$'s, $z'_i$'s, and $\gamma'_j$'s be indeterminates corresponding to the above GKM functions.
In particular, $|t'_i| = |z'_i| = 2$ and $|\gamma'_j| = 2j$.
Then
\[
H^*(\G(\mathrm{SO}(10)/T^5)) = \Z[t'_i, z'_i, \gamma'_j \suchthat 1 \leq i \leq 5, \ 1 \leq j \leq 4]/I',
\]
where $I'$ is the ideal generated by
\begin{align*}
& 2\gamma'_j - c_j(\mathbf{z}') +c_j(\mathbf{t}') \quad (1 \leq j \leq 4),\\
& \sum_{j=1}^{2k}(-1)^j \gamma'_j (\gamma'_{2k-j} + c_{2k-j}(\mathbf{t}')) \quad (1 \leq k \leq 4), \\
& c_5(\mathbf{z}') - c_5(\mathbf{t}'),
\end{align*}
where $\gamma'_j = 0$ for $j \geq 5$ and $c_i(\mathbf{t}') = 0$ for $i > 5$.
\end{thm}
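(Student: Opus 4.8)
The plan is to deduce the statement from the general machinery developed above together with the classical description of the ordinary integral cohomology of the $D_5$ flag manifold; this is exactly the pattern for which Propositions~\ref{lifts of generators}, \ref{lifts of relations} and~\ref{free} are designed (compare the treatment of $EIII$ above). First I would reduce to equivariant cohomology. The positive roots of type $D_5$ are the forms $t'_i - t'_j$ and $t'_i + t'_j$ for $1 \le i < j \le 5$ in $H^*(BT^5) = \Z[t'_1, \dots, t'_5]$; these are pairwise non-associate irreducible elements of a polynomial ring, hence pairwise relatively prime, so Theorem~\ref{HHH} applies and gives $H^*(\G(\mathrm{SO}(10)/T^5)) \cong H^*_{T^5}(\mathrm{SO}(10)/T^5)$. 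By Proposition~\ref{Sc. basis} the latter is a free $H^*(BT^5)$-module whose graded rank is the Poincar\'e polynomial $\prod_{d \in \{2,4,5,6,8\}} \frac{1 - q^{2d}}{1 - q^2}$ over the degrees of $W(D_5)$, of total rank $|W(D_5)| = 1920$.

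Next I would verify that the asserted generators and relations are correct as GKM functions. By construction $z'_i(w) = w(t'_i)$ is a GKM function, and $\gamma'_j = \tfrac12(c_j(\mathbf z') - c_j(\mathbf t'))$ is well defined with values in $H^*(BT^5)$, since for each $w \in W(D_5)$ the tuple $w\mathbf t'$ differs from $\mathbf t'$ by a permutation and an even number of sign changes, whence $c_j(w\mathbf t') - c_j(\mathbf t') \in 2\,H^*(BT^5)$. Each generator of $I'$ then lies in the kernel of the tautological map $\Xi_1 \colon \Z[t'_i, z'_i, \gamma'_j] \to H^*(\G(\mathrm{SO}(10)/T^5))$: the form $2\gamma'_j - c_j(\mathbf z') + c_j(\mathbf t')$ is the defining identity; the form $c_5(\mathbf z') - c_5(\mathbf t')$ vanishes because $c_5(\mathbf t') = t'_1 \cdots t'_5$ is $W(D_5)$-invariant, so $c_5(\mathbf z')(w) = w(t'_1 \cdots t'_5) = t'_1 \cdots t'_5$ for every $w$; and each quadratic form $\sum_{j=1}^{2k}(-1)^j\gamma'_j(\gamma'_{2k-j} + c_{2k-j}(\mathbf t'))$ maps to $0$ because four times its image under $\Xi_1$, after substituting $2\gamma'_j = c_j(\mathbf z') - c_j(\mathbf t')$ and cancelling the mixed terms by the reindexing $j \leftrightarrow 2k - j$, equals $(-1)^k\bigl(p_k(\mathbf z') - p_k(\mathbf t')\bigr)$, which vanishes since $p_k(\mathbf t')$ (the $k$-th elementary symmetric function of the $t_i'^2$) is $W(D_5)$-invariant and $H^*(\G(\mathrm{SO}(10)/T^5))$ is torsion-free. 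To see that $z'_i$, $\gamma'_j$ generate $H^*(\G(\mathrm{SO}(10)/T^5))$ over $H^*(BT^5)$ I would apply Proposition~\ref{lifts of generators} with $R_1 = H^*(BT^5)$, $R_2 = \Z$, $M_1 = H^*_{T^5}(\mathrm{SO}(10)/T^5)$, $M_2 = H^*(\mathrm{SO}(10)/T^5)$: here $M_1$ is $R_1$-free by Proposition~\ref{Sc. basis}, $M_2 \cong R_2 \otimes_{R_1} M_1$ because the Serre spectral sequence of $\mathrm{SO}(10)/T^5 \to ET^5 \times_{T^5} \mathrm{SO}(10)/T^5 \to BT^5$ collapses (the fibre has cohomology concentrated in even degrees), and $R_1/R_1^+ = \Z = R_2/R_2^+$; since the images of $z'_i$ and $\gamma'_j$ generate the ordinary cohomology $H^*(\mathrm{SO}(10)/T^5)$, their lifts $z'_i$, $\gamma'_j$ generate $M_1$. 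Thus $\Xi_1$ is surjective and $I' \subseteq \ker\Xi_1$.

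It then remains to prove $\ker\Xi_1 = I'$, and I would use Proposition~\ref{lifts of relations} with the same $R_1, R_2, M_1, M_2$ and polynomial generators $z'_i, \gamma'_j$. Its hypothesis is the classical presentation of the integral cohomology of the $D_5$ flag manifold: that the kernel of $\Xi_2 \colon \Z[z'_i, \gamma'_j] \to H^*(\mathrm{SO}(10)/T^5)$ is the ideal $\bar I'$ generated by $2\gamma'_j - c_j(\mathbf z')$, by $\sum_{j=1}^{2k}(-1)^j\gamma'_j\gamma'_{2k-j}$, and by $c_5(\mathbf z')$ --- precisely the reductions modulo $(t'_i)$ of the listed generators of $I'$. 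Since those generators lie in $\ker\Xi_1$ and lift the generators of $\bar I'$ through the reduction map, Proposition~\ref{lifts of relations} yields $\ker\Xi_1 = I'$, and the theorem follows. Alternatively the last step can be made independent of Proposition~\ref{lifts of relations}: Proposition~\ref{free} reduces the $H^*(BT^5)$-freeness of $\Z[t'_i, z'_i, \gamma'_j]/I'$ to the torsion-freeness of $\Z[z'_i, \gamma'_j]/\bar I'$, after which a comparison of Poincar\'e polynomials forces the degree-preserving surjection $\Z[t'_i, z'_i, \gamma'_j]/I' \to H^*(\G(\mathrm{SO}(10)/T^5))$ between free $H^*(BT^5)$-modules of equal graded rank to be an isomorphism.

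The hard part is this completeness step, which really is the content of the theorem: it presupposes the exact integral presentation of $H^*(\mathrm{SO}(10)/T^5)$ --- in particular that $\bar I'$ has no missing generators and that $\Z[z'_i, \gamma'_j]/\bar I'$ is torsion-free of rank $1920$ --- and proving that directly requires an explicit monomial (or Gr\"obner) basis computation for $\bar I'$. The persistent subtlety throughout is $2$-torsion: because $H^*(B\mathrm{SO}(10))$ carries $2$-torsion, the classes $c_j(\mathbf z') - c_j(\mathbf t')$ are divisible by $2$ while their halves $\gamma'_j$ are genuinely new generators, so one cannot pass to $\Z[\tfrac12]$-coefficients; every divisibility step --- the half-integrality of $\gamma'_j$, and the division by $4$ converting the Pontryagin identities into the quadratic relations --- must be carried out over $\Z$.
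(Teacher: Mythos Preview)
The paper does not prove this theorem: it is quoted verbatim from \cite[Theorem~6.1]{FIM} and used as input for the computation of the fiber GKM graph. The only supplementary remark the paper makes is the sentence following the statement, explaining that $c_k(\mathbf{z}'^2)-c_k(\mathbf{t}'^2)=0$ as a GKM function and that Fukukawa--Ishida--Masuda show this quantity, divided by~$4$, equals the quadratic generator $\sum_{j=1}^{2k}(-1)^j\gamma'_j(\gamma'_{2k-j}+c_{2k-j}(\mathbf{t}'))$. There is therefore no ``paper's own proof'' to compare against.

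Your outline is a coherent way to reprove the result using the machinery of Sections~2 and~4, and your verification that the listed relations lie in $\ker\Xi_1$ is correct (your reindexing argument for the quadratic relations is exactly the divisibility-by-$4$ observation the paper attributes to \cite{FIM}). But you should be aware that your argument is not self-contained: both the route via Proposition~\ref{lifts of relations} and the alternative via Proposition~\ref{free} rest on the \emph{ordinary} presentation $H^*(\mathrm{SO}(10)/T^5)\cong\Z[z'_i,\gamma'_j]/\bar I'$ being already known, complete with torsion-freeness and the correct Poincar\'e polynomial. You call this ``classical,'' and it is (Toda and others), but that is precisely the substance of the theorem; the cited paper \cite{FIM} in fact works the other way around, establishing the GKM presentation directly and deducing the ordinary one. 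So your proposal reduces the equivariant statement to the non-equivariant one rather than proving it from scratch, which is fine as a logical deduction but is a different argument from the one in \cite{FIM}, and you should cite the non-equivariant input explicitly rather than leave it as an unproved hypothesis.
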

For $1 \leq k \leq 4$, we have
\begin{equation}
\label{c_k}
c_k(\mathbf{z}'^2) - c_k(\mathbf{t}'^2) = 0
\end{equation}
as a GKM function
since $W(\mathrm{SO}(10))$ permutes $t'_i$'s and changes (even number of) their signs.
Fukukawa, Masuda, and Ishida showed that
the left-hand side of \eqref{c_k} is divisible by $4$ and the quotient is exactly the ideal generator
$\sum_{j=1}^{2k}(-1)^j \gamma'_j (\gamma'_{2k-j} + c_{2k-j}(\mathbf{t}'))$
(cf. \cite[Proof of Lemma 5.5]{FIM}).

We may assume that $T^5$ is the inverse image of the maximal torus $T_0=T/T^1$
of $U/T^1 = \mathrm{SO}(10)/\Z_2$ under $\mathrm{SO}(10) \to \mathrm{SO}(10)/\Z_2$.
Let $\tilde{T}^5$ be the inverse image of $T^5$
under the universal covering $\mu \colon \mathrm{Spin}(10) \to \mathrm{SO}(10)$
and $\tilde{t}_i$ the image of $t_i'$ by $\mu^*$.
Let $\tilde{z}_i(w) = w(\tilde{t}_i)$ for $w \in W(D_5)$,
$\tilde{\mathbf{t}} = \{\tilde{\mathbf{t}}_1, \ldots, \tilde{\mathbf{t}}_5\}$,
$\tilde{\mathbf{z}} = \{\tilde{\mathbf{z}}_1, \ldots, \tilde{\mathbf{z}}_5\}$,
and $\tilde{\gamma}_j = \frac{1}{2}(c_j(\tilde{\mathbf{z}}) - c_j(\tilde{\mathbf{t}}))$.
The maps between tori
$T^5 \leftarrow \tilde{T}^5 \to T$ induce a diagram of ring homomorphisms
\[
\Z[t'_i \suchthat 1 \leq i \leq 5]
\to \Z[\tilde{t}_i, \tilde{\gamma} \suchthat 1 \leq i \leq 5]/(2\tilde{\gamma}-c_1(\tilde{\mathbf{t}}))
\leftarrow \Z[t_i,x \suchthat 0 \leq i \leq 5]/(3x-c_1(\mathbf{t})).
\]
Let $G$ denote the underlying graph of the fiber GKM graph.
Let $\alpha_1$, $\alpha_2$, and $\alpha_1'$ denote the canonical axial function
corresponding to $T^5$, $\tilde{T}^5$, and $T$, respectively.
Recall that $H^*(G,\alpha'_1)$ has the Schubert classes and they form a $H^*(BT^5)$-basis.
The Schubert classes of $H^*(G,\alpha'_1)$ naturally can be regarded as elements of $H^*(G,\alpha_1)$ (respectively $H^*(G,\alpha_2)$)
and they form an $H^*(BT)$-basis of $H^*(G,\alpha_1)$ (respectively an $H^*(B\tilde{T}^5)$-basis of $H^*(G,\alpha_2)$).
Hence the following conditions hold for $(\beta,S) = (\alpha_1,T)$, $(\alpha_1',T^5)$.
\begin{enumerate}
\item
$H^*(G,\beta)$ is a free $H^*(BS)$-module.
\item
$H^*(G,\alpha_2)$ is isomorphic with $H^*(G,\alpha)\otimes_{H^*(BS)} H^*(B\tilde{T}^5)$.
\end{enumerate}
Therefore, by Proposition \ref{images of relations},
\[
H^*(G,\alpha_2) \cong H^*(B\tilde{T}^5)[\tilde{z}_i, \tilde{\gamma}_j \suchthat 1 \leq i \leq 5, \ 1 \leq j \leq 4]/\tilde{I},
\]
where $\tilde{I}$ is the ideal generated by 
the images of the generators of $I'$ under $\mu^*$.


If we have lifts of $\tilde{z}_i$'s and $\tilde{\gamma}_j$'s through
$H^*(B\tilde{T}^5) \leftarrow H^*(BT)$,
then Proposition \ref{lifts of generators} guarantees that
the lifts are $H^*(BT)$-algebra generators of $H^*(G,\alpha_1)$.
Obviously $z_i \in H^*(G,\alpha_1)$ is a lift of $\tilde{z}_i \in H^*(G,\alpha_2)$.
Let $\mathbf{t} = \{\mathbf{t}_0, \ldots, \mathbf{t}_5\}$ and
$\mathbf{z} = \{\mathbf{z}_0, \ldots, \mathbf{z}_5\}$.
Let us consider lifts of $\tilde{\gamma}_j$'s.
We define four functions $\gamma_1, \ldots, \gamma_4 \colon W(D_5) \to H^*(BT)\otimes \Q$ to be
\begin{align*}
\gamma_1 &= \frac{1}{3}(c_1(\mathbf{z}) - c_1(\mathbf{t})),\\
\gamma_2 &= \frac{1}{4}(c_2(\mathbf{z}) - c_2(\mathbf{t})),\\
\gamma_3   &= \frac{1}{2}(c_3(\mathbf{z}) - c_3(\mathbf{t}) -\gamma_1 c_2(\mathbf{t})) -\gamma_2 z_0 +\gamma_1 t_0^2,\\
\gamma_4 &= \frac{1}{3}(c_4(\mathbf{z}) - c_4(\mathbf{t}) -\gamma_2^2 -\gamma_2 (c_2(\mathbf{t}) -2t_0^2 -3t_0 \bar{t}) +\gamma_1(2\gamma_1 \bar{t}^2 +\gamma_1 t_0 \bar{t} +\bar{t}^3 +2t_0^2 \bar{t}))).
\end{align*}
We can verify that they are GKM functions $W(D_5) \to H^*(BT)$
by evaluating them at each vertex.
In fact, by symmetry,
it is sufficient to evaluate them at the unit element $e$,
$\sigma_{(e_1 +e_2)}$, and $\sigma_{(e_1 +e_2)} \sigma_{(e_3+e_4)}$.
The images of $\gamma_j$'s under $\iota_0^* \colon H^*(BT) \to H^*(B\tilde{T}^5)$
are as follows.
\begin{align*}
\iota_0^* \gamma_1 = & \: \frac{1}{3}(c_1(\tilde{\mathbf{z}}) + \tilde{\gamma}_1 +\tilde{\gamma} -c_1(\tilde{\mathbf{t}}) - \tilde{\gamma}) = \tilde{\gamma}_1,\\
\iota_0^* \gamma_2 = & \: \frac{1}{4}(c_2(\tilde{\mathbf{z}}) + \frac{1}{2}c_1(\tilde{\mathbf{z}})^2 - c_2(\tilde{\mathbf{t}}) - \frac{1}{2}c_1(\tilde{\mathbf{t}})^2)\\
			 = & \: \frac{1}{4}(c_2(\tilde{\mathbf{z}}) + \frac{1}{2}c_1(\tilde{\mathbf{z}}^2) +c_2(\tilde{\mathbf{z}}) - c_2(\tilde{\mathbf{t}}) -\frac{1}{2}c_1(\tilde{\mathbf{t}}^2) -c_2(\tilde{\mathbf{t}})) = \tilde{\gamma}_2,\\
\iota_0^* \gamma_3   = & \: \frac{1}{2}(c_3(\tilde{\mathbf{z}}) + (\tilde{\gamma}_1+\tilde{\gamma})c_2(\tilde{\mathbf{z}}) - c_3(\tilde{\mathbf{t}}) - \tilde{\gamma}c_2(\tilde{\mathbf{t}}) -\tilde{\gamma}_1 (c_2(\tilde{\mathbf{t}}) +2\tilde{\gamma}^2)) -\tilde{\gamma}_2 (\tilde{\gamma}_1 +\tilde{\gamma}) +\tilde{\gamma}_1\tilde{\gamma}^2 = \tilde{\gamma}_3,\\
\iota_0^* \gamma_4 = & \: \frac{1}{3}(c_4(\tilde{\mathbf{z}}) + \frac{1}{2}c_1(\tilde{\mathbf{z}})c_3(\tilde{\mathbf{z}}) - c_4(\tilde{\mathbf{t}}) - \frac{1}{2}c_1(\tilde{\mathbf{t}})c_3(\tilde{\mathbf{t}}) - \tilde{\gamma}_2^2 -\tilde{\gamma}_2 c_2(\tilde{\mathbf{t}})) \\
			 = & \: \frac{1}{3}(c_4(\tilde{\mathbf{z}}) - c_4(\tilde{\mathbf{t}}) + \frac{1}{2}(c_4(\tilde{\mathbf{z}}) - c_4(\tilde{\mathbf{t}})) + \frac{1}{4}(c_2(\tilde{\mathbf{z}})^2 - c_2(\tilde{\mathbf{t}})^2) - \tilde{\gamma}_2(\tilde{\gamma}_2 + c_2(\tilde{\mathbf{t}}))) = \tilde{\gamma}_4. \\
\end{align*}
Hence $z_i$'s and $\gamma_j$'s generate $H^*(G,\alpha_1)$ as an $H^*(BT)$-algebra.

Next we have to consider lifts of the generators of the ideal $\tilde{I}$.
However, since we have to lift them from the fiber $\F$ to the whole GKM graph $\G$ further,
we consider them later and will obtain the generators of $\tilde{I}=I_\F$
when we obtain the ideal generators of $I_\G$.

\subsection{The equivariant cohomology ring of $E_6/T$}
We have defined the GKM functions $\gamma_j$'s on the fiber GKM graph in the previous subsection
and they are also GKM functions on $\G((E_6)_\C/B)$.
We can check it by evaluating them at each vertex of $\G((E_6)_\C/B)$.
In fact, by symmetry,
it is sufficient to evaluate them at a much smaller number of vertices as above.

Let us consider lifts of the generators of $\tilde{I}$.
Let $R$ be a ring.
For $\mathbf{x}= \{ x_1, \ldots, x_n \},\mathbf{y}= \{ y_1, \ldots, y_n \} \subset R$ and $z \in R$,
let $\mathbf{x}-z$ denote $\{ x_1-z, \ldots, x_n-z \}$
and $\mathbf{x}\mathbf{y}$ denote $\{ x_1 y_1, \ldots, x_n y_n\}$.
Let $\mathbf{t}_+ = \{t_1, \ldots, t_5\}$ and
$\mathbf{z}_+ = \{z_1, \ldots, z_5\}$.
Since, for $w \in W(D_5)$ and $1 \leq i, j \leq 5$, $w(t_i) = -(t_j -\bar{t})$ if and only if $w(t_i -\bar{t}) = -t_j$,
$W(D_5)$ permutes $\{t_i(t_i -\bar{t}) \suchthat 1 \leq i \leq 5\}$.
Hence, for $1\leq j \leq 4$,
$c_j(\mathbf{z}_+(\mathbf{z}_+-\bar{z}))$ is constant on each fiber $\rho_\lambda W(D_5)$.
Thus the GKM function $c_j(\mathbf{z}_+(\mathbf{z}_+-\bar{z})) - c_j(\mathbf{t}_+(\mathbf{t}_+-\bar{t}))$ is regarded as a GKM function on $\G(EIII)$.
Moreover we have the GKM function $c_5(2\mathbf{z}_+-\bar{z}) - c_5(2\mathbf{t}_+-\bar{t})$ on $\G(EIII)$.
Let $\gamma_i = 0$ for $i > 4$ and $c_i(\mathbf{x}) = 0$ for $i > n$.
By Theorem \ref{generators of EIII}, we can express these GKM functions as polynomials in $\tau$ and $\omega$ over $H^*(BT)$.
Moreover we can divide these polynomials by some integers (see Appendix \ref{appendix}).

Let $r_2$, $r_4$, $r_5$, $r_6$, and $r_8$ denote
the left-hand side polynomials over $H^*(BT)$ of \eqref{c1/8}, \eqref{c2/6}, \eqref{c5/32}, \eqref{c3/4}, and \eqref{c4/4}, respectively.
The images of all terms of the above equations by $\iota_0^*$ are zero
except for $\sum_{j=1}^{2k}(-1)^j \gamma_j (\gamma_{2k-j} + c_{2k-j}(\mathbf{t}_+))$ in $r_k$
and $c_5(\mathbf{z}_+) - c_5(\mathbf{t}_+)$ in $r_5$
since the vanishing terms are multiples of $\tau$, $\omega$, or $\bar{t}$.
Hence, for $k=2,4,6,8$,
$r_k$ is a lift of $\sum_{j=1}^{2k}(-1)^j \gamma_j' (\gamma_{2k-j}' + c_{2k-j}(\mathbf{t}'))$
and $r_5$ is a lift of $c_5(\mathbf{z}') - c_5(\mathbf{t}')$.

By Theorem \ref{generators},
$z_i$'s, $\gamma_j$'s, $\tau$, and $\omega$ generates $H^*(\G((E_6)_\C/B))$ as an $H^*(BT)$-algebra.
Put
\begin{align*}
q_1 &= -3\gamma_1 +(c_1(\mathbf{z}) - c_1(\mathbf{t})),\\
q_2 &= -4\gamma_2 +(c_2(\mathbf{z}) - c_2(\mathbf{t})),\\
q_3 &= -2\gamma_3  +(c_3(\mathbf{z}) - c_3(\mathbf{t}) -\gamma_1 c_2(\mathbf{t})) -2\gamma_2 z_0 +2\gamma_1 t_0^2,\\
q_4 &= -3\gamma_4 +(c_4(\mathbf{z}) - c_4(\mathbf{t}) -3\gamma_2^2 -3\gamma_2 (c_2(\mathbf{t}) -2t_0^2 -3t_0 \bar{t}) +3\gamma_1(2\gamma_1 \bar{t}^2 +\gamma_1 t_0 \bar{t} +\bar{t}^3 +2t_0^2 \bar{t}))).
\end{align*}
We have verified that $q_i$ is a lift of $2\tilde{\gamma}_i - (c_i(\tilde{\mathbf{z}}) - c_i(\tilde{\mathbf{t}}))$
through the ring homomorphism
\[
\Z[\tilde{t}_i, \tilde{\gamma} \suchthat 1 \leq i \leq 5]/(2\tilde{\gamma}-c_1(\tilde{\mathbf{t}}))
\overset{\iota^*_0}{\leftarrow} \Z[t_i,x \suchthat 0 \leq i \leq 5]/(3x-c_1(\mathbf{t}))
\]
by computing $\iota^*_0 \gamma_i$ for $1 \leq i \leq 4$ in subsection \ref{subsec. fiber}.
By Theorem \ref{relations}, the generators of the ideal 
$\mathrm{Ker} (\Xi \colon H^*(BT)[z_i,\gamma_j,\tau,\omega \suchthat 1 \leq i \leq 5, 1 \leq j \leq 4] \to H^*(\G((E_6)_\C/B)))$ are
$q_1$, $q_2$, $q_3$, $q_4$, $r_2$, $r_4$, $r_5$, $r_6$, $r_8$, $r_9$, and $r_{12}$.
Now we obtain a concrete description of the integral equivariant cohomology of $E_6/T$.

\begin{thm}
\label{thm. main}
Let $z_i$'s, $\gamma_j$'s, $\tau$, and $\omega$ be indeterminates corresponding to the above GKM functions.
In particular, $|z_i| = |\tau| = 2$, $|\gamma_j| = 2j$, and $|\omega| = 8$.
Then
\[
H^*_T(E_6/T) = H^*(BT)[z_i,\gamma_j,\tau,\omega \suchthat 1 \leq i \leq 5, 1 \leq j \leq 4]/I,
\]
where
$I$ is the ideal generated by $q_1$, $q_2$, $q_3$, $q_4$, $r_2$, $r_4$, $r_5$, $r_6$, $r_8$, $r_9$, and $r_{12}$.
\end{thm}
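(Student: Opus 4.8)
The plan is to run the equivariant Leray--Hirsch package (Theorems~\ref{generators} and~\ref{relations}) on a single GKM fiber bundle, feeding in the presentations of the fiber and the base obtained in the previous subsections. To begin, I would invoke Theorem~\ref{HHH} to identify $H^*_T(E_6/T)\cong H^*_T((E_6)_\C/B)$ with the combinatorial ring $H^*(\G((E_6)_\C/B))$; this applies because $\Phi^+(E_6)$ is pairwise relatively prime in $H^*(BT)$---all roots of $E_6$ have the same length, so no two positive roots are proportional, each root is a primitive vector of $H^2(BT)$, and $H^*(BT)\cong\Z[x,t_1,\dots,t_5]$ is a polynomial ring. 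I would then fix the GKM fiber bundle $\F\to\G\to\B$ with $\G=\G((E_6)_\C/B)$, $\B=\G(EIII)=\G((E_6)_\C/P)$ for the parabolic $P$ attached to $\{\alpha_2,\dots,\alpha_6\}$, and $\F$ the fiber GKM subgraph on $W(D_5)$ (the fiber studied, and related to the $\mathrm{SO}(10)$- and $\mathrm{Spin}(10)$-flag graphs, in \S\ref{subsec. fiber}). The sequences $(v_i)$, $(\varphi_i)$ demanded in \S\ref{Leray-Hirsch} are supplied, as in the Remark there, by an $l^P$-nondecreasing enumeration of $W/W_P$ together with the equivariant Schubert classes $S^P_{v_i}$, so the hypotheses of Theorems~\ref{generators} and~\ref{relations} are met.

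Turning to generators: by \S\ref{subsec. fiber} (Proposition~\ref{lifts of generators}, applied to the \cite{FIM} presentation of $H^*(\G(\mathrm{SO}(10)/T^5))$ pulled through the covering $\mathrm{Spin}(10)\to\mathrm{SO}(10)$) the ring $H^*(\F)$ is generated over $H^*(BT)$ by $z_1,\dots,z_5$ and $\gamma_1,\dots,\gamma_4$, and by Theorem~\ref{generators of EIII} the ring $H^*(\B)$ is generated over $H^*(BT)$ by $\tau$ and $\omega$. Theorem~\ref{generators} then makes $H^*(\G)$ an $H^*(\B)$-algebra generated by lifts of the $z_i$ and $\gamma_j$; since $z_i,\gamma_j,\tau,\omega$ are themselves genuine GKM functions on $\G$, no choice of lift is required, and $\Xi_\G\colon H^*(BT)[z_i,\gamma_j,\tau,\omega]\to H^*(\G)$ is onto.

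Turning to relations: Theorem~\ref{relations} says $I_\G=\ker\Xi_\G$ is generated by $I_\B$ together with any system of lifts, through $\iota_e$ and lying inside $I_\G$, of a generating set of $I_\F$; equivalently $H^*(\G)=H^*(\B)[z_i,\gamma_j]/(\text{those lifts})$, into which one substitutes the presentation of $H^*(\B)$. By Proposition~\ref{relations of EIII} one has $I_\B=(r_9,r_{12})$. By \S\ref{subsec. fiber} (Propositions~\ref{images of relations} and~\ref{lifts of relations}) a generating set of $I_\F$ is gotten from the \cite{FIM} relations for $\mathrm{SO}(10)$---the defining relations $2\tilde\gamma_j-(c_j(\tilde{\mathbf{z}})-c_j(\tilde{\mathbf{t}}))$, the quadratic relations $\sum_j(-1)^j\tilde\gamma_j(\tilde\gamma_{2k-j}+c_{2k-j}(\tilde{\mathbf{t}}))$ for $1\le k\le 4$, and $c_5(\tilde{\mathbf{z}})-c_5(\tilde{\mathbf{t}})$---by lifting along $\iota_0^*\colon H^*(BT)\to H^*(B\tilde T^5)$. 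I would then produce the explicit lifts living in $I_\G$: the polynomials $q_1,\dots,q_4$ (for the defining relations; $q_j$ essentially expresses $\gamma_j$ through the $z_i$ and $H^*(BT)$), $r_2,r_4,r_6,r_8$ (for the four quadratic relations), and $r_5$ (for the $c_5$ relation), obtained by rewriting the symmetric-function identities built from $c_j(\mathbf{z}_+(\mathbf{z}_+-\bar z))-c_j(\mathbf{t}_+(\mathbf{t}_+-\bar t))$ for $1\le j\le 4$ and $c_5(2\mathbf{z}_+-\bar z)-c_5(2\mathbf{t}_+-\bar t)$---each of which is constant along the $W(D_5)$-fibers, hence a polynomial in $\tau,\omega$ over $H^*(BT)$ by Theorem~\ref{generators of EIII}---and clearing the integer denominators recorded in the appendix. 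The two points that make this work: each $q_i$ and each $r_k$ is an honest GKM function on $\G$ equal to $0$, hence lies in $I_\G$; and under $\iota_0^*$ every term except the designated $\mathrm{SO}(10)$ relation dies because it is a multiple of $\tau$, $\omega$, or $\bar t$, so $\iota_0^*$ returns exactly that relation, identifying the $\iota_e$-images as a generating set of $I_\F$. Assembling, $I_\G=(q_1,\dots,q_4,r_2,r_4,r_5,r_6,r_8,r_9,r_{12})$, which is the stated ideal.

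The hard part is not this assembly but the two computational inputs it consumes. First, the presentation $H^*(\G(EIII))=H^*(BT)[\tau,\omega]/(r_9,r_{12})$: generation (Theorem~\ref{generators of EIII}) rests on the precise vanishing loci and leading values of the auxiliary functions $\omega,\omega_6,\omega_7,\omega_8,\omega_{12}$ along the filtration $V_4\subset V_6\subset V_7\subset V_8\subset V_{12}\subset V(\G(EIII))$ by complete subgraphs; the relations $r_9,r_{12}$ are then pinned down by solving a linear system among the degree-$18$ and degree-$24$ monomials (matching the Toda--Watanabe relations of $H^*(EIII)$), and the resulting surjection is upgraded to an isomorphism by the freeness chain (Propositions~\ref{free} and~\ref{isom. of EIII}, using \cite{TW}). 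Second, constructing the GKM functions $\gamma_1,\dots,\gamma_4$ on the fiber and the lifts $q_i,r_k$ on $\G$ and verifying that each really is a GKM function and has the prescribed $\iota_0^*$-image: this is lengthy root-system bookkeeping in $E_6$, though the $W(D_5)$-symmetry and the uniqueness of Schubert classes reduce every verification to a handful of Weyl-group elements. Granting those inputs, Theorem~\ref{thm. main} is precisely the output of Theorems~\ref{generators} and~\ref{relations}.
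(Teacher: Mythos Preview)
Your proof plan is correct and follows essentially the same route as the paper: identify $H^*_T(E_6/T)$ with $H^*(\G((E_6)_\C/B))$ via Theorem~\ref{HHH}, run the GKM fiber bundle $\F\to\G\to\B$ through Theorems~\ref{generators} and~\ref{relations}, and feed in the base presentation $H^*(\B)=H^*(BT)[\tau,\omega]/(r_9,r_{12})$ from \S4.4 together with the fiber generators $z_i,\gamma_j$ and the lifted relations $q_i,r_k$ from \S4.5--4.6. One small point of phrasing: you conflate $\iota_e$ (which sends $\tau,\omega\mapsto 0$) with $\iota_0^*$ (which sends $\bar t\mapsto 0$); the paper's argument really uses both---$\iota_e$ to land in $I_\F$, then $\iota_0^*$ with Proposition~\ref{lifts of relations} to confirm those images generate $I_\F$---but your account makes clear you have this two-step structure in mind.
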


As a corollary, we obtain the ordinary integral cohomology ring of $E_6/T$.

\begin{cor}[{\cite[Theorem B]{TW}}]
Let $|y_i| = 2$ and $|\delta_j| = 2j$ for $0 \leq i \leq 5$ and $j=1$, $3$, $4$.
Then
$H^*(E_6/T) \cong \Z[y_i,\delta_1,\delta_3,\delta_4 \suchthat 0 \leq i \leq 5]/J$,
where
$|y_i|=2$, $|\delta_i|=2i$ and
$J$ is the ideal generated by $Q_1$, $Q_2$, $Q_3$, $Q_4$, $R_5$, $R_6$, $R_8$, $R_9$, and $R_{12}$:
\begin{align*}
& Q_1 = c_1(\mathbf{y}) -3\delta_1,
 \qquad Q_2 = c_2(\mathbf{y}) -4\delta_1^2,
 \qquad Q_3 = c_3(\mathbf{y}) -2\delta_3,\\
& Q_4 = c_4(\mathbf{y}) +2\delta_1^4 -3\delta_4,
 \qquad R_5 = c_5(\mathbf{y}) -c_4(\mathbf{y}) \delta_1 +c_3(\mathbf{y})\delta_1^2 -2\delta^5, \\
& R_6 = 2c_6(\mathbf{y}) -c_4(\mathbf{y})\delta_1^2 -\delta_1^6 +\delta_3^2,
 \qquad R_8 = -9c_6(\mathbf{y})\delta_1^2 +3c_5(\mathbf{y})\delta_1^3 -\delta_1^8 +3\delta_4(\delta_4-c_3(\mathbf{y})\delta_1 +2\delta_1^4),\\
& R_9 = -3\omega'\tau' +\tau'^9, \qquad R_{12} = \omega'^3 +15\omega'^2\tau'^4 -9\omega'\tau'^8,
\end{align*}
for $\mathbf{y}=\{y_0, y_1, \ldots, y_5\}$, $\tau' = \delta_1-y_0$,
and $\omega' = \delta_4 -c_3(\mathbf{y})\delta_1 +2\delta_1^4 +(\delta_3 -2\delta_1^3 +\delta_1^2\tau' -\delta_1\tau'^2 +\tau'^3)\tau'$.
\end{cor}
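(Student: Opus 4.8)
The plan is to obtain $H^*(E_6/T)$ from the equivariant ring of Theorem~\ref{thm. main} by specializing to a point of $BT$. Since the equivariant Schubert classes form an $H^*(BT)$-basis of $H^*_T(E_6/T)$ (Proposition~\ref{Sc. basis}), the Borel fibration $E_6/T\to ET\times_T E_6/T\to BT$ satisfies the Leray--Hirsch theorem, so
\[
H^*(E_6/T)\;\cong\;H^*_T(E_6/T)\otimes_{H^*(BT)}\Z\;=\;H^*_T(E_6/T)\big/\bigl(H^*(BT)^+\cdot H^*_T(E_6/T)\bigr),
\]
the isomorphism being restriction to the fibre, a ring map. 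Applying $\,\cdot\otimes_{H^*(BT)}\Z$ to the presentation of Theorem~\ref{thm. main} (right exactness of the tensor product) yields
\[
H^*(E_6/T)\;\cong\;\Z[z_i,\gamma_j,\tau,\omega\suchthat 1\le i\le 5,\ 1\le j\le 4]\big/\bar I,
\]
where $\bar I$ is generated by the classes $\bar q_1,\dots,\bar q_4,\bar r_2,\bar r_4,\bar r_5,\bar r_6,\bar r_8,\bar r_9,\bar r_{12}$ obtained from $q_1,\dots,r_{12}$ by setting the degree-$2$ generators $x,t_0,\dots,t_5$ of $H^*(BT)$ (equivalently $\bar t$) equal to $0$.

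It then remains to match this quotient with the asserted presentation by an explicit change of variables. I would keep $y_i:=z_i$ for $1\le i\le 5$ and introduce $y_0$ as the reduction of the class $w\mapsto w(t_0)$; the linear relation expressing $y_0$ through $y_1,\dots,y_5,\gamma_1$ is then precisely $Q_1=c_1(\mathbf y)-3\delta_1$ with $\delta_1:=\gamma_1$. The two lowest FIM-type relations $\bar r_2$ and $\bar r_4$ are linear in $\gamma_2$ and in $\gamma_4$ respectively (their reductions being, up to correction terms in $\delta_1$ and $\gamma_3$, of the form $\gamma_2\equiv\delta_1^2$ and $\gamma_4\equiv 2\delta_1\gamma_3-\delta_1^4$), so I use them to eliminate $\gamma_2$ and $\gamma_4$; substituting $\gamma_2\equiv\delta_1^2$ turns $\bar q_2$ into $Q_2=c_2(\mathbf y)-4\delta_1^2$. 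Absorbing the remaining lower-degree corrections, I define $\delta_3$ from $\gamma_3$ so that $\bar q_3\mapsto Q_3$, and $\delta_4$ from $\omega$; putting $\tau'=\delta_1-y_0$ and letting $\omega'$ be the stated degree-$8$ polynomial in $y_i,\delta_1,\delta_3,\delta_4$, the substitution $\tau\mapsto-\tau'$, $\omega\mapsto(\tau')^4-\omega'$ (a mild elaboration of the coordinate change in the proof of Proposition~\ref{isom. of EIII}) carries $\bar r_9,\bar r_{12}$ to $R_9,R_{12}$. One is then left to verify $\bar q_4\mapsto Q_4$ and $\bar r_5,\bar r_6,\bar r_8\mapsto R_5,R_6,R_8$, which is a direct computation in $\Z[y_i,\delta_1,\delta_3,\delta_4]$.

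The main obstacle is entirely computational: organizing the finitely many explicit substitutions so that each reduced relation $\bar q_i,\bar r_j$ is carried onto the corresponding $Q_i,R_j$, while $\bar r_2,\bar r_4$ are consistently spent eliminating $\gamma_2,\gamma_4$. The delicate point is the pair $\bar r_9,\bar r_{12}$: their naive reductions do not literally coincide with $R_9,R_{12}$, so one must pass through the coordinate change already used to prove $\Z[\tau,\omega]/(\bar r_9,\bar r_{12})\cong H^*(EIII)$ (Proposition~\ref{isom. of EIII}). As a safeguard for the bookkeeping I would note that both rings are finitely generated, free over $\Z$, with the same Poincar\'e polynomial --- namely that of $E_6/T$, read off from the Schubert basis (its Euler characteristic is $|W(E_6)|$) --- so it suffices to exhibit the ring homomorphism in one direction, check surjectivity, and check that the images of $Q_1,\dots,R_{12}$ vanish; surjectivity together with equality of Poincar\'e polynomials then forces it to be an isomorphism.
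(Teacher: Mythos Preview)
Your proposal is correct and follows essentially the same route as the paper: reduce the presentation of Theorem~\ref{thm. main} modulo $H^{>0}(BT)$, then exhibit an explicit change of variables carrying the reduced relations to $Q_1,\ldots,R_{12}$. The one substantive bookkeeping difference is that the paper uses $\bar r_4$ to eliminate $\omega$ (keeping $\gamma_4$, with $\gamma_4\mapsto\delta_4-\delta_1^4$), whereas you use $\bar r_4$ to eliminate $\gamma_4$ (keeping $\omega$, with $\omega\mapsto\tau'^4-\omega'$); either choice works since $\bar r_4$ is linear with unit coefficient in both, but note that your parenthetical ``$\gamma_4\equiv 2\delta_1\gamma_3-\delta_1^4$'' omits the $\omega$ and $\tau$ terms that are actually present in $\bar r_4$. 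The paper's choice makes the verification of $\bar q_4\mapsto Q_4$ immediate, and because the paper's $f$ is visibly a polynomial-ring isomorphism (the substitutions are triangular), it does not need your Poincar\'e-polynomial safeguard---it simply checks that each $\bar q_i,\bar r_j$ lands in the ideal $J$ (indeed on $\pm Q_i,\pm R_j$ modulo earlier relations), which suffices.
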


\begin{proof}
Let us consider the fiber sequence.
\[
E_6/T \to ET \times_T E_6/T \to BT
\]
Since the cohomology rings of $E_6/T$ and $BT$ have vanishing odd parts,
by Theorem \ref{thm. main}
\[
H^*(E_6/T) = (H^*(BT)[z_i,\gamma_j,\tau,\omega \suchthat 1 \leq i \leq 5, 1 \leq j \leq 4]/I)/H^{>0}(BT).
\]
Under the homomorphism $H^*(ET \times_T E_6/T) \to H^*(E_6/T)$,
the relations $q_i$'s and $r_i$'s turn into
\begin{align*}
\bar{q}_1 &= -3\gamma_1 +c_1(\mathbf{z}), &&\bar{q}_2 = -4\gamma_2 +c_2(\mathbf{z}),\\
\bar{q}_3 &= -2\gamma_3 +c_3(\mathbf{z}) -2\gamma_2 z_0, && \bar{q}_4 = -3\gamma_4 +c_4(\mathbf{z}) -3\gamma_2^2,
\end{align*}
\begin{align*}
 \bar{r}_2 = \ & \gamma_2 - \gamma_1^2,\\
 \bar{r}_4 = \ & \sum_{j=1}^{4}(-1)^j \gamma_j (\gamma_{4-j} + c_{4-j}(\mathbf{t}_+)) +\omega - \gamma_1^3\tau + \gamma_1\tau^3 - \gamma_3\tau, \\
 \bar{r}_5 = \ & c_5(\mathbf{z}_+) + \gamma_4\tau + \tau^5 - 2\tau\omega, \\
 \bar{r}_6 = \ & \displaystyle{\sum_{j=1}^{6}(-1)^j} \gamma_j (\gamma_{6-j} + c_{6-j}(\mathbf{t}_+))
 +2\tau^6 +2\gamma_1\tau^5 +\gamma_1^2 \tau^4 -\gamma_1^3 \tau^3\\
 & +(-\gamma_1\gamma_3+2\gamma_4-2\gamma_1^4)\tau^2
 +(-3\gamma_1\omega -3\gamma_1^2\gamma_3+3\gamma_1\gamma_4)\tau -\gamma_1^2 \omega -4\tau^2\omega, \\
 \bar{r}_8 = \ & \displaystyle{\sum_{j=1}^{8}(-1)^j} \gamma_j (\gamma_{8-j} + c_{8-j}(\mathbf{t}_+))
 +\omega^2 - 5\tau^4\omega - 4\gamma_1^2\tau^2\omega - 2\gamma_1\tau^3\omega - 2\gamma_3\tau\omega - \gamma_4\omega \\
 & + 2\tau^8 + \gamma_1\tau^7 + 2\gamma_1^2\tau^6 + \gamma_3\tau^5  + \gamma_4\tau^4 + \gamma_1\gamma_4\tau^3
 + 2\gamma_1^2\gamma_4\tau^2 + \gamma_3\gamma_4\tau, \\
 \bar{r}_9 = \ & 2\tau^9 - 6\tau^5\omega + 3\tau\omega^2, \\
 \bar{r}_{12} = \ & \omega^3 -18\tau^4 \omega^2 +24\tau^8 \omega -7\tau^{12}.
\end{align*}
Hence
\[
H^*(E_6/T) = \Z[z_i,\gamma_j, \tau, \omega \suchthat 1 \leq i \leq 5,1 \leq j \leq 4]/(\bar{q}_1, \bar{q}_2, \bar{q}_3, \bar{q}_4, \bar{r}_2, \bar{r}_4, \bar{r}_5, \bar{r}_6, \bar{r}_8, \bar{r}_9, \bar{r}_{12}).
\]
We can erase $\gamma_2$ and $\omega$ by $\bar{r}_2=0$ and $\bar{r}_4 = 0$,
and we can define a ring homomorphism 
\[
f \colon \Z[z_i,\gamma_1, \gamma_3, \gamma_4, \tau \suchthat 1 \leq i \leq 5] \to \Z[y_i, \delta_1, \delta_3, \delta_4 \suchthat 0 \leq i \leq 5]
\]
by
$z_i \mapsto y_i$ for $1 \leq i \leq 5$, $\tau \mapsto -\tau' = y_0-\delta_1$,
$\gamma_1 \mapsto \delta_1$, $\gamma_3 \mapsto \delta_3 -\delta_1^2 y_0$, $\gamma_4 \mapsto \delta_4 -\delta_1^4$.
Obviously $f$ is isomorphic.
Straightforward calculations show that
\begin{align*}
& f(\bar{q}_i) = Q_i \text{ for } 1 \leq i \leq 4, \\
& f(\bar{r}_5) \equiv R_5 \pmod{Q_i \suchthat 1 \leq i \leq 4}, \\
& f(\bar{r}_6) \equiv -R_6 \pmod{Q_i, R_5 \suchthat 1 \leq i \leq 4},\\
& f(\bar{r}_8) \equiv R_8 \pmod{Q_i, R_5, R_6 \suchthat 1 \leq i \leq 4}.
\end{align*}
Notice that $f(\omega) \equiv -\omega' +\tau'^4 \pmod{Q_i, R_5, R_6, R_8 \suchthat 1 \leq i \leq 4}$
and then $f$ is compatible with the homomorphism in the proof of Proposition \ref{isom. of EIII}.
Hence
\begin{align*}
& f(\bar{r}_9) \equiv R_9 \pmod{Q_i, R_5, R_6, R_8 \suchthat 1 \leq i \leq 4},\\
& f(\bar{r}_{12}) \equiv -R_{12} \pmod{Q_i, R_5, R_6, R_8 \suchthat 1 \leq i \leq 4}.
\end{align*}
Therefore $f$ induces a ring isomorphism
\[
H^*(E_6/T) \cong \Z[y_i, \delta_1, \delta_3, \delta_4 \suchthat 0 \leq i \leq 5]/J.
\]
\end{proof}

\vfill

\appendix

\section{Long equations and tables}
\label{appendix}
The kernel of $\Xi \colon H^*(BT)[\tau, \omega] \to H^*(\G(EIII))$ is generated by the following elements.
\begin{align*}
\nonumber
r_9 = & \: 2\tau^9 - 6\tau^5\omega + 3\tau\omega^2 + 6\tau^8t_0 - 12\tau^8\bar{t} - 3\tau^7t_0^2 - 54\tau^7t_0\bar{t} + 21\tau^7\bar{t}^2 + 3\tau^7c_2(\mathbf{t}) - 34\tau^6t_0^3 - 81\tau^6t_0^2\bar{t} \\ \nonumber
	& \: + 108\tau^6t_0\bar{t}^2 + 13\tau^6t_0c_2(\mathbf{t}) - 13\tau^6\bar{t}^3 - 8\tau^6\bar{t}c_2(\mathbf{t}) - 3\tau^6c_3(\mathbf{t}) - 45\tau^5t_0^4 + 24\tau^5t_0^3\bar{t} + 279\tau^5t_0^2\bar{t}^2 \\ \nonumber
	& \: + 15\tau^5t_0^2c_2(\mathbf{t}) - 30\tau^5t_0\bar{t}^3 - 48\tau^5t_0\bar{t}c_2(\mathbf{t}) - 9\tau^5t_0c_3(\mathbf{t}) + 12\tau^5\bar{t}^4 + 9\tau^5\bar{t}c_3(\mathbf{t}) + \tau^5c_2(\mathbf{t})^2 + 2\tau^5c_4(\mathbf{t}) \\ \nonumber
      & \: - 6\tau^4\omega t_0 + 24\tau^4\omega \bar{t} + 12\tau^4t_0^5 + 285\tau^4t_0^4\bar{t} + 510\tau^4t_0^3\bar{t}^2 - 19\tau^4t_0^3c_2(\mathbf{t}) + 45\tau^4t_0^2\bar{t}^3 - 132\tau^4t_0^2\bar{t}c_2(\mathbf{t}) \\ \nonumber
	& \: - 3\tau^4t_0^2c_3(\mathbf{t}) + 60\tau^4t_0\bar{t}^4 - 12\tau^4t_0\bar{t}^2c_2(\mathbf{t}) + 39\tau^4t_0\bar{t}c_3(\mathbf{t}) + 6\tau^4t_0c_2(\mathbf{t})^2 + 4\tau^4t_0c_4(\mathbf{t}) - 4\tau^4\bar{t}^3c_2(\mathbf{t}) \\ \nonumber
      & \: - 3\tau^4\bar{t}^2c_3(\mathbf{t}) + \tau^4\bar{t}c_2(\mathbf{t})^2 - 6\tau^4\bar{t}c_4(\mathbf{t}) - 2\tau^4c_2(\mathbf{t})c_3(\mathbf{t}) - 2\tau^4c_5(\mathbf{t}) + 10\tau^3\omega t_0^2 + 44\tau^3\omega t_0\bar{t} - 26\tau^3\omega \bar{t}^2 \\ \nonumber
      & \: - 4\tau^3\omega c_2(\mathbf{t}) + 95\tau^3t_0^6 + 504\tau^3t_0^5\bar{t} + 675\tau^3t_0^4\bar{t}^2 - 73\tau^3t_0^4c_2(\mathbf{t}) + 220\tau^3t_0^3\bar{t}^3 - 208\tau^3t_0^3\bar{t}c_2(\mathbf{t}) \\ \nonumber
      & \: + 24\tau^3t_0^3c_3(\mathbf{t}) + 120\tau^3t_0^2\bar{t}^4 - 48\tau^3t_0^2\bar{t}^2c_2(\mathbf{t}) + 78\tau^3t_0^2\bar{t}c_3(\mathbf{t}) + 14\tau^3t_0^2c_2(\mathbf{t})^2 - 3\tau^3t_0^2c_4(\mathbf{t}) \\ \nonumber
      & \: - 16\tau^3t_0\bar{t}^3c_2(\mathbf{t}) + 4\tau^3t_0\bar{t}c_2(\mathbf{t})^2 - 18\tau^3t_0\bar{t}c_4(\mathbf{t}) - 9\tau^3t_0c_2(\mathbf{t})c_3(\mathbf{t}) + 4\tau^3\bar{t}^3c_3(\mathbf{t}) + 3\tau^3\bar{t}^2c_4(\mathbf{t}) \\ \nonumber
      & \: - \tau^3\bar{t}c_2(\mathbf{t})c_3(\mathbf{t}) + 6\tau^3\bar{t}c_5(\mathbf{t}) + \tau^3c_2(\mathbf{t})c_4(\mathbf{t}) + \tau^3c_3(\mathbf{t})^2 + 24\tau^2\omega t_0^3 + 42\tau^2\omega t_0^2\bar{t} - 24\tau^2\omega t_0\bar{t}^2 \\ \nonumber
      & \: - 9\tau^2\omega t_0c_2(\mathbf{t}) + 18\tau^2\omega \bar{t}^3 + 3\tau^2\omega \bar{t}c_2(\mathbf{t}) + 3\tau^2\omega c_3(\mathbf{t}) + 102\tau^2t_0^7 + 429\tau^2t_0^6\bar{t} + 558\tau^2t_0^5\bar{t}^2 \\ \nonumber
      & \: - 81\tau^2t_0^5c_2(\mathbf{t}) + 285\tau^2t_0^4\bar{t}^3 - 186\tau^2t_0^4\bar{t}c_2(\mathbf{t}) + 39\tau^2t_0^4c_3(\mathbf{t}) + 120\tau^2t_0^3\bar{t}^4 - 72\tau^2t_0^3\bar{t}^2c_2(\mathbf{t}) \\ \nonumber
      & \: + 84\tau^2t_0^3\bar{t}c_3(\mathbf{t}) + 16\tau^2t_0^3c_2(\mathbf{t})^2 - 11\tau^2t_0^3c_4(\mathbf{t}) - 24\tau^2t_0^2\bar{t}^3c_2(\mathbf{t}) + 18\tau^2t_0^2\bar{t}^2c_3(\mathbf{t}) + 6\tau^2t_0^2\bar{t}c_2(\mathbf{t})^2 \\ \nonumber
      & \: - 24\tau^2t_0^2\bar{t}c_4(\mathbf{t}) - 15\tau^2t_0^2c_2(\mathbf{t})c_3(\mathbf{t}) + 3\tau^2t_0^2c_5(\mathbf{t}) + 12\tau^2t_0\bar{t}^3c_3(\mathbf{t}) - 3\tau^2t_0\bar{t}^2c_4(\mathbf{t}) - 3\tau^2t_0\bar{t}c_2(\mathbf{t})c_3(\mathbf{t}) \\ \nonumber
      & \: + 6\tau^2t_0\bar{t}c_5(\mathbf{t}) + 4\tau^2t_0c_2(\mathbf{t})c_4(\mathbf{t}) + 3\tau^2t_0c_3(\mathbf{t})^2 - 4\tau^2\bar{t}^3c_4(\mathbf{t}) - 3\tau^2\bar{t}^2c_5(\mathbf{t}) + \tau^2\bar{t}c_2(\mathbf{t})c_4(\mathbf{t}) \\ \nonumber
      & \: - \tau^2c_2(\mathbf{t})c_5(\mathbf{t}) - \tau^2c_3(\mathbf{t})c_4(\mathbf{t}) + 24\tau\omega t_0^4 + 48\tau\omega t_0^3\bar{t} + 30\tau\omega t_0^2\bar{t}^2 - 10\tau\omega t_0^2c_2(\mathbf{t}) + 36\tau\omega t_0\bar{t}^3 \\ \nonumber
      & \: - 2\tau\omega t_0\bar{t}c_2(\mathbf{t}) + 6\tau\omega t_0c_3(\mathbf{t}) - 4\tau\omega \bar{t}^2c_2(\mathbf{t}) - 2\tau\omega c_4(\mathbf{t}) + 45\tau t_0^8 + 183\tau t_0^7\bar{t} + 261\tau t_0^6\bar{t}^2 - 38\tau t_0^6c_2(\mathbf{t}) \\ \nonumber
      & \: + 171\tau t_0^5\bar{t}^3 - 87\tau t_0^5\bar{t}c_2(\mathbf{t}) + 24\tau t_0^5c_3(\mathbf{t}) + 60\tau t_0^4\bar{t}^4 - 51\tau t_0^4\bar{t}^2c_2(\mathbf{t}) + 51\tau t_0^4\bar{t}c_3(\mathbf{t}) + 8\tau t_0^4c_2(\mathbf{t})^2 \\ \nonumber
      & \: - 10\tau t_0^4c_4(\mathbf{t}) - 16\tau t_0^3\bar{t}^3c_2(\mathbf{t}) + 27\tau t_0^3\bar{t}^2c_3(\mathbf{t}) + 4\tau t_0^3\bar{t}c_2(\mathbf{t})^2 - 24\tau t_0^3\bar{t}c_4(\mathbf{t}) - 10\tau t_0^3c_2(\mathbf{t})c_3(\mathbf{t}) \\ \nonumber
      & \: + 5\tau t_0^3c_5(\mathbf{t}) + 12\tau t_0^2\bar{t}^3c_3(\mathbf{t}) - 18\tau t_0^2\bar{t}^2c_4(\mathbf{t}) - 3\tau t_0^2\bar{t}c_2(\mathbf{t})c_3(\mathbf{t}) + 12\tau t_0^2\bar{t}c_5(\mathbf{t}) + 4\tau t_0^2c_2(\mathbf{t})c_4(\mathbf{t}) \\ \nonumber
      & \: + 3\tau t_0^2c_3(\mathbf{t})^2 - 8\tau t_0\bar{t}^3c_4(\mathbf{t}) + 9\tau t_0\bar{t}^2c_5(\mathbf{t}) + 2\tau t_0\bar{t}c_2(\mathbf{t})c_4(\mathbf{t}) - 2\tau t_0c_2(\mathbf{t})c_5(\mathbf{t}) - 2\tau t_0c_3(\mathbf{t})c_4(\mathbf{t}) \\ \nonumber
      & \: + 4\tau\bar{t}^3c_5(\mathbf{t}) - \tau\bar{t}c_2(\mathbf{t})c_5(\mathbf{t}) + \tau c_3(\mathbf{t})c_5(\mathbf{t}) - 3\omega^2\bar{t} + 2\omega t_0^5 - 14\omega t_0^4\bar{t} - 52\omega t_0^3\bar{t}^2 - \omega t_0^3c_2(\mathbf{t}) - 62\omega t_0^2\bar{t}^3 \\ \nonumber
      & \: + 7\omega t_0^2\bar{t}c_2(\mathbf{t}) + \omega t_0^2c_3(\mathbf{t}) - 40\omega t_0\bar{t}^4 + 8\omega t_0\bar{t}^2c_2(\mathbf{t}) - 4\omega t_0\bar{t}c_3(\mathbf{t}) - \omega t_0c_4(\mathbf{t}) - 8\omega \bar{t}^5 + 4\omega \bar{t}^3c_2(\mathbf{t}) \\ \nonumber
      & \: - 2\omega \bar{t}^2c_3(\mathbf{t}) + \omega \bar{t}c_4(\mathbf{t}) + \omega c_5(\mathbf{t}),
 \\
r_{12} = & \: \omega^3
+\omega^2 M 
+\omega L 
+\prod_{i=0}^5 (\tau +t_0 -t_i) K,
\end{align*}
where
\begin{align*}
M = & \: -18\tau^4 + 24\tau^3t_0 - 12\tau^3\bar{t} - 69\tau^2t_0^2 + 150\tau^2t_0\bar{t} - 447\tau^2\bar{t}^2 + 12\tau^2c_2(\mathbf{t}) + 156\tau t_0^3 - 1026\tau t_0^2\bar{t} \\
    & \: + 1116\tau t_0\bar{t}^2 - 30\tau t_0c_2(\mathbf{t}) - 3450\tau \bar{t}^3 + 210\tau \bar{t}c_2(\mathbf{t}) - 12\tau c_3(\mathbf{t}) + 12t_0^4 - 108t_0^3\bar{t} + 1167t_0^2\bar{t}^2 \\
    & \: - 5t_0^2c_2(\mathbf{t}) - 1242t_0\bar{t}^3 + 20t_0\bar{t}c_2(\mathbf{t}) + 3t_0c_3(\mathbf{t}) + 3939\bar{t}^4 - 227\bar{t}^2c_2(\mathbf{t}) + 15\bar{t}c_3(\mathbf{t}) - c_4(\mathbf{t}),
\end{align*}
\begin{align*}
L = & \: +24\tau^8 - 18\tau^7t_0 + 6\tau^7\bar{t} + 65\tau^6t_0^2 - 248\tau^6t_0\bar{t} + 611\tau^6\bar{t}^2 - 11\tau^6c_2(\mathbf{t}) - 204\tau^5t_0^3 + 1182\tau^5t_0^2\bar{t} \\ \nonumber
    & \: - 1182\tau^5t_0\bar{t}^2 + 45\tau^5t_0c_2(\mathbf{t}) + 4560\tau^5\bar{t}^3 - 273\tau^5\bar{t}c_2(\mathbf{t}) + 9\tau^5c_3(\mathbf{t}) - 505\tau^4t_0^4 + 2600\tau^4t_0^3\bar{t} \\ \nonumber
    & \: - 10623\tau^4t_0^2\bar{t}^2 + 191\tau^4t_0^2c_2(\mathbf{t}) + 10544\tau^4t_0\bar{t}^3 - 527\tau^4t_0\bar{t}c_2(\mathbf{t}) + 15\tau^4t_0c_3(\mathbf{t}) - 27748\tau^4\bar{t}^4 \\ \nonumber
    & \: + 2213\tau^4\bar{t}^2c_2(\mathbf{t}) - 138\tau^4\bar{t}c_3(\mathbf{t}) - 19\tau^4c_2(\mathbf{t})^2 + 12\tau^4c_4(\mathbf{t}) + 102\tau^3t_0^5 - 894\tau^3t_0^4\bar{t} - 13812\tau^3t_0^3\bar{t}^2 \\ \nonumber
    & \: - 45\tau^3t_0^3c_2(\mathbf{t}) + 7320\tau^3t_0^2\bar{t}^3 + 1437\tau^3t_0^2\bar{t}c_2(\mathbf{t}) - 75\tau^3t_0^2c_3(\mathbf{t}) - 64680\tau^3t_0\bar{t}^4 + 3129\tau^3t_0\bar{t}^2c_2(\mathbf{t}) \\ \nonumber
    & \: - 66\tau^3t_0\bar{t}c_3(\mathbf{t}) - 2\tau^3t_0c_2(\mathbf{t})^2 - 7\tau^3t_0c_4(\mathbf{t}) + 31320\tau^3\bar{t}^5 + 3003\tau^3\bar{t}^3c_2(\mathbf{t}) - 351\tau^3\bar{t}^2c_3(\mathbf{t}) \\ \nonumber
    & \: - 278\tau^3\bar{t}c_2(\mathbf{t})^2 + 17\tau^3\bar{t}c_4(\mathbf{t}) + 30\tau^3c_2(\mathbf{t})c_3(\mathbf{t}) - 9\tau^3c_5(\mathbf{t}) + 693\tau^2t_0^6 - 6450\tau^2t_0^5\bar{t} - 8097\tau^2t_0^4\bar{t}^2 \\ \nonumber
    & \: - 371\tau^2t_0^4c_2(\mathbf{t}) - 10200\tau^2t_0^3\bar{t}^3 + 4519\tau^2t_0^3\bar{t}c_2(\mathbf{t}) - 72\tau^2t_0^3c_3(\mathbf{t}) - 74226\tau^2t_0^2\bar{t}^4 - 27\tau^2t_0^2\bar{t}^2c_2(\mathbf{t}) \\ \nonumber
    & \: - 957\tau^2t_0^2\bar{t}c_3(\mathbf{t}) + 45\tau^2t_0^2c_2(\mathbf{t})^2 + 35\tau^2t_0^2c_4(\mathbf{t}) + 34164\tau^2t_0\bar{t}^5 + 11353\tau^2t_0\bar{t}^3c_2(\mathbf{t}) + 498\tau^2t_0\bar{t}^2c_3(\mathbf{t}) \\ \nonumber
    & \: - 696\tau^2t_0\bar{t}c_2(\mathbf{t})^2 - 101\tau^2t_0\bar{t}c_4(\mathbf{t}) + 33\tau^2t_0c_2(\mathbf{t})c_3(\mathbf{t}) + 12\tau^2t_0c_5(\mathbf{t}) - 23466\tau^2\bar{t}^6 - 2045\tau^2\bar{t}^4c_2(\mathbf{t}) \\ \nonumber
    & \: - 3975\tau^2\bar{t}^3c_3(\mathbf{t}) + 201\tau^2\bar{t}^2c_2(\mathbf{t})^2 + 302\tau^2\bar{t}^2c_4(\mathbf{t}) + 207\tau^2\bar{t}c_2(\mathbf{t})c_3(\mathbf{t}) - 3\tau^2\bar{t}c_5(\mathbf{t}) - 9\tau^2c_2(\mathbf{t})c_4(\mathbf{t}) \\ \nonumber
    & \: - 12\tau^2c_3(\mathbf{t})^2 + 1204\tau t_0^7 - 5774\tau t_0^6\bar{t} - 5334\tau t_0^5\bar{t}^2 - 730\tau t_0^5c_2(\mathbf{t}) - 18864\tau t_0^4\bar{t}^3 + 4524\tau t_0^4\bar{t}c_2(\mathbf{t}) \\ \nonumber
    & \: + 194\tau t_0^4c_3(\mathbf{t}) - 57204\tau t_0^3\bar{t}^4 - 461\tau t_0^3\bar{t}^2c_2(\mathbf{t}) - 2224\tau t_0^3\bar{t}c_3(\mathbf{t}) + 96\tau t_0^3c_2(\mathbf{t})^2 - 82\tau t_0^3c_4(\mathbf{t}) \\ \nonumber
    & \: - 15912\tau t_0^2\bar{t}^5 + 14513\tau t_0^2\bar{t}^3c_2(\mathbf{t}) + 2145\tau t_0^2\bar{t}^2c_3(\mathbf{t}) - 682\tau t_0^2\bar{t}c_2(\mathbf{t})^2 + 652\tau t_0^2\bar{t}c_4(\mathbf{t}) \\ \nonumber
    & \: - 16\tau t_0^2c_2(\mathbf{t})c_3(\mathbf{t}) - 22\tau t_0^2c_5(\mathbf{t}) - 41308\tau t_0\bar{t}^6 + 2310\tau t_0\bar{t}^4c_2(\mathbf{t}) - 7486\tau t_0\bar{t}^3c_3(\mathbf{t}) - 74\tau t_0\bar{t}^2c_2(\mathbf{t})^2 \\ \nonumber
    & \: - 649\tau t_0\bar{t}^2c_4(\mathbf{t}) + 430\tau t_0\bar{t}c_2(\mathbf{t})c_3(\mathbf{t}) + 52\tau t_0\bar{t}c_5(\mathbf{t}) + 16\tau t_0c_2(\mathbf{t})c_4(\mathbf{t}) - 24\tau t_0c_3(\mathbf{t})^2 + 1256\tau \bar{t}^7 \\ \nonumber
    & \: + 4576\tau \bar{t}^5c_2(\mathbf{t}) + 314\tau \bar{t}^4c_3(\mathbf{t}) - 276\tau \bar{t}^3c_2(\mathbf{t})^2 + 2461\tau \bar{t}^3c_4(\mathbf{t}) + 8\tau \bar{t}^2c_2(\mathbf{t})c_3(\mathbf{t}) - 163\tau \bar{t}^2c_5(\mathbf{t}) \\ \nonumber
    & \: - 142\tau \bar{t}c_2(\mathbf{t})c_4(\mathbf{t}) + 4\tau c_2(\mathbf{t})c_5(\mathbf{t}) + 8\tau c_3(\mathbf{t})c_4(\mathbf{t}) + 133t_0^8 - 1201t_0^7\bar{t} + 4189t_0^6\bar{t}^2 - 96t_0^6c_2(\mathbf{t}) \\ \nonumber
    & \: + 8905t_0^5\bar{t}^3 + 853t_0^5\bar{t}c_2(\mathbf{t}) + 58t_0^5c_3(\mathbf{t}) + 18650t_0^4\bar{t}^4 - 3493t_0^4\bar{t}^2c_2(\mathbf{t}) - 439t_0^4\bar{t}c_3(\mathbf{t}) + 15t_0^4c_2(\mathbf{t})^2 \\ \nonumber
    & \: - 54t_0^4c_4(\mathbf{t}) + 57244t_0^3\bar{t}^5 - 2125t_0^3\bar{t}^3c_2(\mathbf{t}) + 2155t_0^3\bar{t}^2c_3(\mathbf{t}) - 120t_0^3\bar{t}c_2(\mathbf{t})^2 + 404t_0^3\bar{t}c_4(\mathbf{t}) \\ \nonumber
    & \: - 12t_0^3c_2(\mathbf{t})c_3(\mathbf{t}) + 49t_0^3c_5(\mathbf{t}) + 67678t_0^2\bar{t}^6 - 11737t_0^2\bar{t}^4c_2(\mathbf{t}) - 1903t_0^2\bar{t}^3c_3(\mathbf{t}) + 469t_0^2\bar{t}^2c_2(\mathbf{t})^2 \\ \nonumber
    & \: - 822t_0^2\bar{t}^2c_4(\mathbf{t}) + 65t_0^2\bar{t}c_2(\mathbf{t})c_3(\mathbf{t}) - 372t_0^2\bar{t}c_5(\mathbf{t}) + 11t_0^2c_2(\mathbf{t})c_4(\mathbf{t}) - 2t_0^2c_3(\mathbf{t})^2 + 49160t_0\bar{t}^7 \\ \nonumber
    & \: - 11768t_0\bar{t}^5c_2(\mathbf{t}) + 4612t_0\bar{t}^4c_3(\mathbf{t}) + 572t_0\bar{t}^3c_2(\mathbf{t})^2 + 1725t_0\bar{t}^3c_4(\mathbf{t}) - 326t_0\bar{t}^2c_2(\mathbf{t})c_3(\mathbf{t}) + 429t_0\bar{t}^2c_5(\mathbf{t}) \\ \nonumber
    & \: - 80t_0\bar{t}c_2(\mathbf{t})c_4(\mathbf{t}) + 20t_0\bar{t}c_3(\mathbf{t})^2 - 9t_0c_2(\mathbf{t})c_5(\mathbf{t}) + 3t_0c_3(\mathbf{t})c_4(\mathbf{t})  + 10504\bar{t}^8 - 5852\bar{t}^6c_2(\mathbf{t}) \\ \nonumber
    & \: + 2666\bar{t}^5c_3(\mathbf{t}) + 300\bar{t}^4c_2(\mathbf{t})^2 - 1313\bar{t}^4c_4(\mathbf{t}) - 170\bar{t}^3c_2(\mathbf{t})c_3(\mathbf{t}) - 1309\bar{t}^3c_5(\mathbf{t}) + 75\bar{t}^2c_2(\mathbf{t})c_4(\mathbf{t}) \\ \nonumber
    & \: + 10\bar{t}^2c_3(\mathbf{t})^2 + 74\bar{t}c_2(\mathbf{t})c_5(\mathbf{t}) - 5\bar{t}c_3(\mathbf{t})c_4(\mathbf{t}) - 4c_3(\mathbf{t})c_5(\mathbf{t}),
\end{align*}
\begin{align*}
K = & \: -7\tau^6 + 15\tau^5t_0 - 15\tau^5\bar{t} - 41\tau^4t_0^2 + 83\tau^4t_0\bar{t} - 209\tau^4\bar{t}^2 + 6\tau^4c_2(\mathbf{t}) + 101\tau^3t_0^3 - 513\tau^3t_0^2\bar{t} \\ \nonumber
    & \: + 717\tau^3t_0\bar{t}^2 - 20\tau^3t_0c_2(\mathbf{t}) - 1737\tau^3\bar{t}^3 + 96\tau^3\bar{t}c_2(\mathbf{t}) - 4\tau^3c_3(\mathbf{t}) + 45\tau^2t_0^4 - 303\tau^2t_0^3\bar{t} \\ \nonumber
    & \: + 2397\tau^2t_0^2\bar{t}^2 - 28\tau^2t_0^2c_2(\mathbf{t}) - 1395\tau^2t_0\bar{t}^3 + 52\tau^2t_0\bar{t}c_2(\mathbf{t}) - 6\tau^2t_0c_3(\mathbf{t}) + 7467\tau^2\bar{t}^4 - 598\tau^2\bar{t}^2c_2(\mathbf{t}) \\ \nonumber
    & \: + 42\tau^2\bar{t}c_3(\mathbf{t}) + 5\tau^2c_2(\mathbf{t})^2 - \tau^2c_4(\mathbf{t}) - 31\tau t_0^5 + 787\tau t_0^4\bar{t} + 1307\tau t_0^3\bar{t}^2 + 7\tau t_0^3c_2(\mathbf{t}) + 1609\tau t_0^2\bar{t}^3  \\ \nonumber
    & \: - 491\tau t_0^2\bar{t}c_2(\mathbf{t}) + 37\tau t_0^2c_3(\mathbf{t}) + 10283\tau t_0\bar{t}^4 - 319\tau t_0\bar{t}^2c_2(\mathbf{t}) - 34\tau t_0\bar{t}c_3(\mathbf{t}) - 3323\tau \bar{t}^5 - 1257\tau \bar{t}^3c_2(\mathbf{t}) \\ \nonumber
    & \: + 139\tau \bar{t}^2c_3(\mathbf{t}) + 78\tau \bar{t}c_2(\mathbf{t})^2 + 2\tau \bar{t}c_4(\mathbf{t}) - 9\tau c_2(\mathbf{t})c_3(\mathbf{t}) + \tau c_5(\mathbf{t}) - 262t_0^6 + 1188t_0^5\bar{t} + 1728t_0^4\bar{t}^2 \\ \nonumber
    & \: + 155t_0^4c_2(\mathbf{t}) + 4435t_0^3\bar{t}^3 - 919t_0^3\bar{t}c_2(\mathbf{t}) - 33t_0^3c_3(\mathbf{t}) + 13308t_0^2\bar{t}^4 - 285t_0^2\bar{t}^2c_2(\mathbf{t}) + 411t_0^2\bar{t}c_3(\mathbf{t}) \\ \nonumber
    & \: - 20t_0^2c_2(\mathbf{t})^2 + t_0^2c_4(\mathbf{t}) + 10077t_0\bar{t}^5 - 2809t_0\bar{t}^3c_2(\mathbf{t}) - 396t_0\bar{t}^2c_3(\mathbf{t}) + 136t_0\bar{t}c_2(\mathbf{t})^2 + 2t_0c_2(\mathbf{t})c_3(\mathbf{t}) \\ \nonumber
    & \: - t_0c_5(\mathbf{t}) + 5244\bar{t}^6 - 1605\bar{t}^4c_2(\mathbf{t}) + 1327\bar{t}^3c_3(\mathbf{t}) + 73\bar{t}^2c_2(\mathbf{t})^2 - \bar{t}^2c_4(\mathbf{t}) - 77\bar{t}c_2(\mathbf{t})c_3(\mathbf{t}) - \bar{t}c_5(\mathbf{t}) + 4c_3(\mathbf{t})^2.
\end{align*}

We can express the GKM functions
$c_i(\mathbf{z}_+(\mathbf{z}_+-\bar{z})) - c_i(\mathbf{t}_+(\mathbf{t}_+-\bar{t}))$
and $c_5(2\mathbf{z}_+-\bar{z}) - c_5(2\mathbf{t}_+-\bar{t})$
as polynomials in $\tau$ and $\omega$ over $H^*(BT)$ as follows:
\begin{align}
\label{c1}
  & c_1(\mathbf{z}_+(\mathbf{z}_+-\bar{z})) - c_1(\mathbf{t}_+(\mathbf{t}_+-\bar{t})) = -2\tau^2 + 4\bar{t}\tau,\\
\label{c2}
  & c_2(\mathbf{z}_+(\mathbf{z}_+-\bar{z})) - c_2(\mathbf{t}_+(\mathbf{t}_+-\bar{t})) \\ \nonumber
= & \: 4\tau^4+(6t_0-10\bar{t})\tau^3+(-8\bar{t}^2-31t_0\bar{t}-4t_0^2+5c_2(\mathbf{t}_+))\tau^2\\ \nonumber
  & \: +(6\bar{t}^3-t_0\bar{t}^2+(-19t_0^2-c_2(\mathbf{t}_+))\bar{t}-6t_0^3+6c_2(\mathbf{t}_+)t_0-3c_3(\mathbf{t}_+))\tau-6\omega,\\
\label{c3}
  & c_3(\mathbf{z}_+(\mathbf{z}_+-\bar{z})) - c_3(\mathbf{t}_+(\mathbf{t}_+-\bar{t})) \\ \nonumber
= & \: 2\tau^6+(6t_0-6\bar{t})\tau^5+(6\bar{t}^2-18t_0\bar{t}+6t_0^2)\tau^4+(-20\bar{t}^3-15t_0\bar{t}^2+(7c_2(\mathbf{t}_+)-37t_0^2)\bar{t}-2t_0^3+4c_2(\mathbf{t}_+)t_0 \\ \nonumber
  & \: -3c_3(\mathbf{t}_+))\tau^3 +(6\bar{t}^4-27t_0\bar{t}^3+(3c_2(\mathbf{t}_+)-39t_0^2)\bar{t}^2+(-42t_0^3+18c_2(\mathbf{t}_+)t_0+3c_3(\mathbf{t}_+))\bar{t} \\ \nonumber
  & \: -8t_0^4+10c_2(\mathbf{t}_+)t_0^2-4c_3(\mathbf{t}_+)t_0+2c_4(\mathbf{t}_+)-2c_2(\mathbf{t}_+)^2)\tau^2+(6t_0\bar{t}^4+(-7t_0^2-2c_2(\mathbf{t}_+))\bar{t}^3 +(-18t_0^3-6c_3(\mathbf{t}_+))\bar{t}^2 \\ \nonumber
  & \: +(-17t_0^4+10c_2(\mathbf{t}_+)t_0^2-2c_3(\mathbf{t}_+)t_0+c_4(\mathbf{t}_+)+c_2(\mathbf{t}_+)^2)\bar{t}-4t_0^5+6c_2(\mathbf{t}_+)t_0^3-2c_3(\mathbf{t}_+)t_0^2\\ \nonumber
  & \: -2c_2(\mathbf{t}_+)^2t_0-5c_5(\mathbf{t}_+)+c_2(\mathbf{t}_+)c_3(\mathbf{t}_+))\tau -6\omega \tau^2 +12\omega\bar{t}\tau -14\bar{t}^2\omega-10t_0\bar{t}\omega+(2c_2(\mathbf{t}_+)-4t_0^2)\omega,\\
\label{c4}
  & c_4(\mathbf{z}_+(\mathbf{z}_+-\bar{z})) - c_4(\mathbf{t}_+(\mathbf{t}_+-\bar{t})) \\ \nonumber
= & \: -3\omega^2+(18\tau^4+(6t_0-66\bar{t})\tau^3+(64\bar{t}^2-46t_0\bar{t}-10t_0^2+8c_2(\mathbf{t}_+))\tau^2+(-26\bar{t}^3+29t_0\bar{t}^2+(-7t_0^2-7c_2(\mathbf{t}_+))\bar{t}\\ \nonumber
  & \: -6t_0^3+6c_2(\mathbf{t}_+)t_0-3c_3(\mathbf{t}_+))\tau-14\bar{t}^4-55t_0\bar{t}^3+(9c_2(\mathbf{t}_+)-47t_0^2)\bar{t}^2+(-20t_0^3+8c_2(\mathbf{t}_+)t_0-3c_3(\mathbf{t}_+))\bar{t} \\ \nonumber
  & \: -4t_0^4+4c_2(\mathbf{t}_+)t_0^2-4c_3(\mathbf{t}_+)t_0+2c_4(\mathbf{t}_+))\omega -7\tau^8+(38\bar{t}-18t_0)\tau^7+(-55\bar{t}^2+129t_0\bar{t}-3t_0^2-9c_2(\mathbf{t}_+))\tau^6 \\ \nonumber
  & \: +(22\bar{t}^3-207t_0\bar{t}^2+(137t_0^2+19c_2(\mathbf{t}_+))\bar{t}+28t_0^3-26c_2(\mathbf{t}_+)t_0+9c_3(\mathbf{t}_+))\tau^5 +(-32\bar{t}^4+9t_0\bar{t}^3 +(9c_2(\mathbf{t}_+)\\ \nonumber
  & \: -348t_0^2)\bar{t}^2+(-9t_0^3+77c_2(\mathbf{t}_+)t_0-24c_3(\mathbf{t}_+))\bar{t}+23t_0^4-18c_2(\mathbf{t}_+)t_0^2+15c_3(\mathbf{t}_+)t_0-6c_4(\mathbf{t}_+)-2c_2(\mathbf{t}_+)^2)\tau^4 \\ \nonumber
  & \: +(6\bar{t}^5-77t_0\bar{t}^4+(7c_2(\mathbf{t}_+)-83t_0^2)\bar{t}^3+(-341t_0^3+40c_2(\mathbf{t}_+)t_0+7c_3(\mathbf{t}_+))\bar{t}^2+(-127t_0^4+125c_2(\mathbf{t}_+)t_0^2 \\ \nonumber
  & \: -56c_3(\mathbf{t}_+)t_0+14c_4(\mathbf{t}_+)-4c_2(\mathbf{t}_+)^2)\bar{t}-6t_0^5+14c_2(\mathbf{t}_+)t_0^3-c_3(\mathbf{t}_+)t_0^2+(-2c_4(\mathbf{t}_+)-8c_2(\mathbf{t}_+)^2)t_0 \\ \nonumber
  & \: +8c_5(\mathbf{t}_+)+4c_2(\mathbf{t}_+)c_3(\mathbf{t}_+))\tau^3+(12t_0\bar{t}^5+(-58t_0^2-2c_2(\mathbf{t}_+))\bar{t}^4+(-105t_0^3+9c_2(\mathbf{t}_+)t_0-10c_3(\mathbf{t}_+))\bar{t}^3 \\ \nonumber
  & \: +(-191t_0^4+49c_2(\mathbf{t}_+)t_0^2-4c_3(\mathbf{t}_+)t_0-4c_4(\mathbf{t}_+)+c_2(\mathbf{t}_+)^2)\bar{t}^2+(-96t_0^5+94c_2(\mathbf{t}_+)t_0^3-50c_3(\mathbf{t}_+)t_0^2 \\ \nonumber
  & \: +(12c_4(\mathbf{t}_+)-7c_2(\mathbf{t}_+)^2)t_0-19c_5(\mathbf{t}_+)+3c_2(\mathbf{t}_+)c_3(\mathbf{t}_+))\bar{t}-13t_0^6+23c_2(\mathbf{t}_+)t_0^4-13c_3(\mathbf{t}_+)t_0^3 \\ \nonumber
  & \: +(5c_4(\mathbf{t}_+)-10c_2(\mathbf{t}_+)^2)t_0^2+(10c_2(\mathbf{t}_+)c_3(\mathbf{t}_+)-5c_5(\mathbf{t}_+))t_0-c_2(\mathbf{t}_+)c_4(\mathbf{t}_+)-2c_3(\mathbf{t}_+)^2)\tau^2 \\ \nonumber
  & \: +(6t_0^2\bar{t}^5+(-13t_0^3-2c_2(\mathbf{t}_+)t_0+2c_3(\mathbf{t}_+))\bar{t}^4+(-35t_0^4+2c_2(\mathbf{t}_+)t_0^2-5c_3(\mathbf{t}_+)t_0+10c_4(\mathbf{t}_+))\bar{t}^3 \\ \nonumber
  & \: +(-46t_0^5+18c_2(\mathbf{t}_+)t_0^3-7c_3(\mathbf{t}_+)t_0^2+(12c_4(\mathbf{t}_+)+c_2(\mathbf{t}_+)^2)t_0+2c_5(\mathbf{t}_+)-c_2(\mathbf{t}_+)c_3(\mathbf{t}_+))\bar{t}^2 \\ \nonumber
  & \: +(-24t_0^6+27c_2(\mathbf{t}_+)t_0^4-17c_3(\mathbf{t}_+)t_0^3+(5c_4(\mathbf{t}_+)-3c_2(\mathbf{t}_+)^2)t_0^2+(2c_2(\mathbf{t}_+)c_3(\mathbf{t}_+)-5c_5(\mathbf{t}_+))t_0 \\ \nonumber
  & \: -4c_2(\mathbf{t}_+)c_4(\mathbf{t}_+)+c_3(\mathbf{t}_+)^2)\bar{t}-4t_0^7+8c_2(\mathbf{t}_+)t_0^5-6c_3(\mathbf{t}_+)t_0^4+(2c_4(\mathbf{t}_+)-4c_2(\mathbf{t}_+)^2)t_0^3 \\ \nonumber
  & \: +(6c_2(\mathbf{t}_+)c_3(\mathbf{t}_+)-6c_5(\mathbf{t}_+))t_0^2+(-2c_2(\mathbf{t}_+)c_4(\mathbf{t}_+)-2c_3(\mathbf{t}_+)^2)t_0+3c_2(\mathbf{t}_+)c_5(\mathbf{t}_+)+c_3(\mathbf{t}_+)c_4(\mathbf{t}_+))\tau, \\
\label{c5}
  & c_5(2\mathbf{z}_+-\bar{z}) - c_5(2\mathbf{t}_+-\bar{t})\\ \nonumber
= & \: 48\omega t-48\omega\bar{t}
	-21t^5+(57\bar{t}-48t_0)t^4+(14\bar{t}^2+196t_0\bar{t}-8t_0^2-20c_2(\mathbf{t}_+))t^3 \\ \nonumber
  &   +(18\bar{t}^3+60t_0\bar{t}^2+(240t_0^2-12c_2(\mathbf{t}_+))\bar{t}+48t_0^3-48c_2(\mathbf{t}_+)t_0+24c_3(\mathbf{t}_+))t^2\\ \nonumber
  &	+(-9\bar{t}^4+12t_0\bar{t}^3+(56t_0^2+4c_2(\mathbf{t}_+))\bar{t}^2+(112t_0^3-16c_2(\mathbf{t}_+)t_0)\bar{t} \\ \nonumber
  &   +32t_0^4-32c_2(\mathbf{t}_+)t_0^2+32c_3(\mathbf{t}_+)t_0-16c_4(\mathbf{t}_+))t.
\end{align}

Note that we have $c_1(\mathbf{t}_+) = 2t_0 +3\bar{t}$
and that, for $\mathbf{x}= \{ x_1, \ldots, x_5 \} \subset R$, $y \in R$,
and $1 \leq k \leq 5$, a straight forward calculation shows
\begin{align}
  & c_k(\mathbf{x}(\mathbf{x}-y))\\ \nonumber
= & \: \sum_{i=1}^k (-1)^i y^i (c_k(\mathbf{x})c_{k-i}(\mathbf{x}) -\binom{i+2}{1}c_{k+1}(\mathbf{x})c_{k-i-1}(\mathbf{x}) +(-\binom{i+4}{2} +\binom{i+2}{1}\binom{i+4}{1})c_{k+2}(\mathbf{x})c_{k-i-2}(\mathbf{x})).
\end{align}

By erasing $c_k(\mathbf{z}_+)$ from \eqref{c1} and dividing it by $8$, we obtain
\begin{align}
\label{c1/8}
& \gamma_2 - \gamma_1(\gamma_1 +c_1(\mathbf{t}_+)) -\gamma_1\bar{t}= 0.
\end{align}
By erasing $c_1(\mathbf{z}_+)$ and $c_2(\mathbf{z}_+)$ from \eqref{c2}, using \eqref{c1/8},
and dividing \eqref{c2} by $6$, we obtain
\begin{align}
\label{c2/6}
& \sum_{j=1}^{4}(-1)^j \gamma_j (\gamma_{4-j} + c_{4-j}(\mathbf{t}_+)) +\omega - \gamma_1^3\tau + \gamma_1\tau^3 - \gamma_3\tau \\ \nonumber
& - 4t_0^2\bar{t}\gamma_1 - 3t_0^2\gamma_1\tau - 4t_0\bar{t}^2\gamma_1 - 2t_0\bar{t}\gamma_1^2 - 9t_0\bar{t}\gamma_1\tau - 3t_0\gamma_1^2\tau - 2\bar{t}^3\gamma_1 \\ \nonumber
& - \bar{t}^2\gamma_1^2 - 3\bar{t}^2\gamma_1\tau + \bar{t}\gamma_1^3 - 3\bar{t}\gamma_1^2\tau - 3\bar{t}\gamma_1\tau^2 + 2\bar{t}\gamma_3 + c_2(\mathbf{t}_+)\gamma_1\tau  = 0.
\end{align}
By erasing $c_1(\mathbf{z}_+), \ldots, c_4(\mathbf{z}_+)$ from \eqref{c5}, using \eqref{c1/8},
and dividing \eqref{c5} by $32$, we obtain
\begin{align}
\label{c5/32}
& c_5(\mathbf{z}_+) - c_5(\mathbf{t}_+)  + \gamma_4\tau + \tau^5 - 2\tau\omega -t_0^4\tau - 3t_0^3\bar{t}\tau - 2t_0^3\tau^2 - 9t_0^2\bar{t}\tau^2 \\ \nonumber
& + t_0^2c_2(\mathbf{t}_+)\tau - 2t_0\bar{t}^3\gamma_1 + 2t_0\bar{t}^2\gamma_1\tau - 9t_0\bar{t}\tau^3 + 2t_0c_2(\mathbf{t}_+)\tau^2 - t_0c_3(\mathbf{t}_+)\tau + 2t_0\tau^4 \\ \nonumber
& - \bar{t}^4\gamma_1 + \bar{t}^3\gamma_1\tau - \bar{t}\gamma_4 - 3\bar{t}\tau^4 + 2\bar{t}\omega + c_2(\mathbf{t}_+)\tau^3 - c_3(\mathbf{t}_+)\tau^2 + c_4(\mathbf{t}_+)\tau = 0.
\end{align}
Now we can erase $c_5(\mathbf{z}_+)$ by \eqref{c5/32}.
By erasing $c_1(\mathbf{z}_+), \ldots, c_5(\mathbf{z}_+)$ from \eqref{c3}, using \eqref{c1/8},
and dividing \eqref{c3} by $4$, we obtain
\begin{align}
\label{c3/4}
& \sum_{j=1}^{6}(-1)^j \gamma_j (\gamma_{6-j} + c_{6-j}(\mathbf{t}_+))
 +2\tau^6 +2\gamma_1\tau^5 +\gamma_1^2 \tau^4 -\gamma_1^3 \tau^3 +(-\gamma_1\gamma_3+2\gamma_4-2\gamma_1^4)\tau^2 \\ \nonumber
& +(-3\gamma_1\omega -3\gamma_1^2\gamma_3+3\gamma_1\gamma_4)\tau -\gamma_1^2 \omega -4\tau^2\omega
 -\gamma_1\bar{t}^5 +\bigl(-4\gamma_1\tau-8\gamma_1t_0+\gamma_1^2 \bigr)\bar{t}^4 \\ \nonumber
& +\bigl(8\gamma_1\tau^2+(3\gamma_1^2-3\gamma_1t_0)\tau-8\gamma_1t_0^2+6\gamma_1^2t_0-2\gamma_3-2\gamma_1^3 \bigr)\bar{t}^3 +\bigl(-2\gamma_1\tau^3+(10\gamma_1t_0+2\gamma_1^2)\tau^2 \\ \nonumber
& +(4\gamma_1t_0^2+10\gamma_1^2t_0+2\gamma_3+5\gamma_1^3)\tau -2\omega+8\gamma_1^2t_0^2+(-2\gamma_3+2\gamma_1^3+2c_2(\mathbf{t}_+)\gamma_1)t_0+3\gamma_1\gamma_3 \\ \nonumber
& -3\gamma_4-2\gamma_1^4-c_2(\mathbf{t}_+)\gamma_1^2+c_3(\mathbf{t}_+)\gamma_1 \bigr)\bar{t}^2+ \bigl(-6\tau^5+(-24t_0-6\gamma_1)\tau^4+(-36t_0^2-24\gamma_1t_0-6\gamma_1^2)\tau^3 \\ \nonumber
& +(-24t_0^3-24\gamma_1t_0^2-14\gamma_1^2t_0-4\gamma_1^3)\tau^2+(4\omega-6t_0^4-10\gamma_1t_0^3-2\gamma_1^2t_0^2+(2\gamma_3+6\gamma_1^3+c_2(\mathbf{t}_+)\gamma_1)t_0 \\ \nonumber
& -2\gamma_1\gamma_3-2\gamma_4+3\gamma_1^4-2c_3(\mathbf{t}_+)\gamma_1)\tau + (t_0+\gamma_1)\omega+(2\gamma_1\gamma_3-4\gamma_4)t_0+(3\gamma_1^2+c_2(\mathbf{t}_+))\gamma_3  \\ \nonumber
& -3\gamma_1\gamma_4+c_3(\mathbf{t}_+)\gamma_1^2-c_4(\mathbf{t}_+)\gamma_1 \bigr)\bar{t} +6t_0\tau^5+(4t_0^2+5\gamma_1t_0+2c_2(\mathbf{t}_+))\tau^4  \\ \nonumber
& +\bigl(-4t_0^3-\gamma_1t_0^2+(6c_2(\mathbf{t}_+)-2\gamma_1^2)t_0-\gamma_1^3+2c_2(\mathbf{t}_+)\gamma_1-2c_3(\mathbf{t}_+) \bigr)\tau^3
+ \bigl(-6t_0^4-7\gamma_1t_0^3  \\ \nonumber
& +(6c_2(\mathbf{t}_+)-10\gamma_1^2)t_0^2+(-\gamma_3-8\gamma_1^3+5c_2(\mathbf{t}_+)\gamma_1-4c_3(\mathbf{t}_+))t_0 \\ \nonumber
& +c_2(\mathbf{t}_+)\gamma_1^2-2c_3(\mathbf{t}_+)\gamma_1+2c_4(\mathbf{t}_+) \bigr)\tau^2+ \bigl(-3t_0\omega-2t_0^5-3\gamma_1t_0^4+(2c_2(\mathbf{t}_+)-\gamma_1^2)t_0^3  \\ \nonumber
& +(-\gamma_3+3c_2(\mathbf{t}_+)\gamma_1-2c_3(\mathbf{t}_+))t_0^2+(-6\gamma_1\gamma_3+3\gamma_4+c_2(\mathbf{t}_+)\gamma_1^2-5c_3(\mathbf{t}_+)\gamma_1+2c_4(\mathbf{t}_+))t_0  \\ \nonumber
& -2c_3(\mathbf{t}_+)\gamma_1^2+2c_4(\mathbf{t}_+)\gamma_1-2c_5(\mathbf{t}_+)\bigr)\tau+(-t_0^2-2\gamma_1t_0)\omega = 0.
\end{align}
Now we can erase $\gamma_3^2$ by \eqref{c3/4}.
By erasing $c_1(\mathbf{z}_+), \ldots, c_5(\mathbf{z}_+)$, and $\gamma_3^2$ from \eqref{c4},
using \eqref{c1/8}, and dividing \eqref{c4} by $4$, we obtain
\begin{align}
\label{c4/4}
& \sum_{j=1}^{8}(-1)^j \gamma_j (\gamma_{8-j} + c_{8-j}(\mathbf{t}_+))
 +\omega^2 - 5\tau^4\omega - 4\gamma_1^2\tau^2\omega - 2\gamma_1\tau^3\omega - 2\gamma_3\tau\omega - \gamma_4\omega \\ \nonumber
& + 2\tau^8 + \gamma_1\tau^7 + 2\gamma_1^2\tau^6 + \gamma_3\tau^5  + \gamma_4\tau^4 + \gamma_1\gamma_4\tau^3
 + 2\gamma_1^2\gamma_4\tau^2 + \gamma_3\gamma_4\tau \\ \nonumber
& -2 \gamma_1 \bar{t}^7+(4 \gamma_1 \tau-6 \gamma_1 t_0+3 \gamma_1^2) \bar{t}^6+(4 \gamma_1 \tau^2+(16 \gamma_1 t_0-5 \gamma_1^2) -4 \gamma_1 t_0^2+10 \gamma_1^2 t_0+2 \gamma_1^3+c_2(\mathbf{t}_+) \gamma_1) \bar{t}^5 \\ \nonumber
& +(-7 \gamma_1 \tau^3+(-4 \gamma_1 t_0-5 \gamma_1^2) \tau^2+(8 \gamma_1 t_0^2-20 \gamma_1^2 t_0+2 \gamma_3-2 \gamma_1^3-c_2(\mathbf{t}_+) \gamma_1) \tau+8 \omega+8 \gamma_1^2 t_0^2 \\ \nonumber
& +(4 \gamma_1^3+2 c_2(\mathbf{t}_+) \gamma_1) t_0 -2 \gamma_4-\gamma_1 \gamma_3-c_3(\mathbf{t}_+) \gamma_1) \bar{t}^4+(3 \tau^5+(12 t_0+12 \gamma_1) \tau^4+(18 t_0^2+15 \gamma_1 t_0+2 \gamma_1^2) \tau^3 \\ \nonumber
& +(12 t_0^3+4 \gamma_1 t_0^2-10 \gamma_1^2 t_0-\gamma_3-8 \gamma_1^3-c_2(\mathbf{t}_+) \gamma_1) \tau^2 +(3 t_0^4-3 \gamma_1 t_0^3-20 \gamma_1^2 t_0^2+(2 \gamma_3-10 \gamma_1^3-3 c_2(\mathbf{t}_+) \gamma_1) t_0 \\ \nonumber
& +6 \gamma_4-2 \gamma_1 \gamma_3+2 \gamma_1^4+c_2(\mathbf{t}_+) \gamma_1^2-c_3(\mathbf{t}_+) \gamma_1) \tau+(18 t_0-5 \gamma_1) \omega+(-2 \gamma_4-2 \gamma_1 \gamma_3-2 c_3(\mathbf{t}_+) \gamma_1) t_0 \\ \nonumber
& +4 \gamma_1 \gamma_4+c_4(\mathbf{t}_+) \gamma_1) \bar{t}^3+(17 \tau^6+(78 t_0-3 \gamma_1) \tau^5+(142 t_0^2+10 \gamma_1 t_0+12 \gamma_1^2-c_2(\mathbf{t}_+)) \tau^4 \\ \nonumber
& +(128 t_0^3+31 \gamma_1 t_0^2+(37 \gamma_1^2-3 c_2(\mathbf{t}_+)) t_0+\gamma_3+5 \gamma_1^3-3 c_2(\mathbf{t}_+) \gamma_1+c_3(\mathbf{t}_+)) \tau^3+(-15 \omega+57 t_0^4 +26 \gamma_1 t_0^3 \\ \nonumber
& +(24 \gamma_1^2-3 c_2(\mathbf{t}_+)) t_0^2+(-6 \gamma_1^3-5 c_2(\mathbf{t}_+) \gamma_1+2 c_3(\mathbf{t}_+)) t_0-\gamma_4+4 \gamma_1 \gamma_3-4 \gamma_1^4-c_2(\mathbf{t}_+) \gamma_1^2+5 c_3(\mathbf{t}_+) \gamma_1 \\ \nonumber
& -c_4(\mathbf{t}_+)) \tau^2 +((7 \gamma_1-15 t_0) \omega+10 t_0^5+6 \gamma_1 t_0^4+(6 \gamma_1^2-c_2(\mathbf{t}_+)) t_0^3+c_3(\mathbf{t}_+) t_0^2+(6 \gamma_4+2 c_3(\mathbf{t}_+) \gamma_1-c_4(\mathbf{t}_+)) t_0 \\ \nonumber
& -5 \gamma_1 \gamma_4+(-3 \gamma_1^2-c_2(\mathbf{t}_+)) \gamma_3-c_3(\mathbf{t}_+) \gamma_1^2-3 c_4(\mathbf{t}_+) \gamma_1+c_5(\mathbf{t}_+)) \tau+(12 t_0^2-6 \gamma_1 t_0-4 \gamma_1^2-4 c_2(\mathbf{t}_+)) \omega \\ \nonumber
& +(6 \gamma_1 \gamma_4+2 c_4(\mathbf{t}_+) \gamma_1) t_0+(2 \gamma_1^2+c_2(\mathbf{t}_+)) \gamma_4+2 c_5(\mathbf{t}_+) \gamma_1) \bar{t}^2+(-12 \tau^7+(-45 t_0-3 \gamma_1) \tau^6 \\ \nonumber
& +(-54 t_0^2-24 \gamma_1 t_0-9 \gamma_1^2-9 c_2(\mathbf{t}_+))\tau^5+(-6 t_0^3-48 \gamma_1 t_0^2+(-20 \gamma_1^2-33 c_2(\mathbf{t}_+)) t_0-3 \gamma_3+\gamma_1^3+6 c_3(\mathbf{t}_+)) \tau^4 \\ \nonumber
& +(20 \omega+36 t_0^4-36 \gamma_1 t_0^3+(-8 \gamma_1^2-45 c_2(\mathbf{t}_+)) t_0^2+(-8 \gamma_3+8 \gamma_1^3-3 c_2(\mathbf{t}_+) \gamma_1+15 c_3(\mathbf{t}_+)) t_0-4 \gamma_4 \\ \nonumber
& +\gamma_1 \gamma_3+2 \gamma_1^4-3 c_2(\mathbf{t}_+) \gamma_1^2-6 c_4(\mathbf{t}_+)) \tau^3+((22 t_0+\gamma_1) \omega+27 t_0^5-7 \gamma_1 t_0^4+(-\gamma_1^2-27 c_2(\mathbf{t}_+)) t_0^3 \\ \nonumber
& +(-8 \gamma_3-5 c_2(\mathbf{t}_+) \gamma_1+12 c_3(\mathbf{t}_+)) t_0^2+(-10 \gamma_4+6 \gamma_1 \gamma_3-5 c_2(\mathbf{t}_+) \gamma_1^2+5 c_3(\mathbf{t}_+) \gamma_1-9 c_4(\mathbf{t}_+)) t_0 \\ \nonumber
& -2 \gamma_1 \gamma_4+3 \gamma_1^2 \gamma_3+4 c_3(\mathbf{t}_+) \gamma_1^2+6 c_5(\mathbf{t}_+)) \tau^2+((5 t_0^2+14 \gamma_1 t_0+9 \gamma_1^2+4 c_2(\mathbf{t}_+)) \omega+6 t_0^6+2 \gamma_1 t_0^5 \\ \nonumber
& +(2 \gamma_1^2-6 c_2(\mathbf{t}_+)) t_0^4+(-3 \gamma_3-2 c_2(\mathbf{t}_+) \gamma_1+3 c_3(\mathbf{t}_+)) t_0^3+(-4 \gamma_4-2 c_2(\mathbf{t}_+) \gamma_1^2+2 c_3(\mathbf{t}_+) \gamma_1-3 c_4(\mathbf{t}_+)) t_0^2 \\ \nonumber
& +(-10 \gamma_1 \gamma_4+2 c_3(\mathbf{t}_+) \gamma_1^2-4 c_4(\mathbf{t}_+) \gamma_1+3 c_5(\mathbf{t}_+)) t_0+(-6 \gamma_1^2-2 c_2(\mathbf{t}_+)) \gamma_4+\gamma_3^2+c_3(\mathbf{t}_+) \gamma_3-3 c_4(\mathbf{t}_+) \gamma_1^2 \\ \nonumber
& -c_5(\mathbf{t}_+) \gamma_1) \tau+(5 t_0^3-2 c_2(\mathbf{t}_+) t_0+2 \gamma_3+2 c_3(\mathbf{t}_+)) \omega+2 c_5(\mathbf{t}_+) \gamma_1 t_0+(-\gamma_3-c_3(\mathbf{t}_+)) \gamma_4+2 c_5(\mathbf{t}_+) \gamma_1^2) \bar{t} \\ \nonumber
& +6 t_0 \tau^7 +(3 t_0^2+6 \gamma_1 t_0+3 c_2(\mathbf{t}_+)) \tau^6+(-7 t_0^3+8 \gamma_1 t_0^2+(4 \gamma_1^2+9 c_2(\mathbf{t}_+)) t_0+c_2(\mathbf{t}_+) \gamma_1-2 c_3(\mathbf{t}_+)) \tau^5 \\ \nonumber
& +(-8 t_0^4-2 \gamma_1 t_0^3+7 c_2(\mathbf{t}_+) t_0^2+(2 \gamma_3+6 c_2(\mathbf{t}_+) \gamma_1-4 c_3(\mathbf{t}_+)) t_0+2 c_2(\mathbf{t}_+) \gamma_1^2-c_3(\mathbf{t}_+) \gamma_1+2 c_4(\mathbf{t}_+)+c_2(\mathbf{t}_+)^2) \tau^4 \\ \nonumber
& +(-4 t_0 \omega-9 \gamma_1 t_0^4+(-4 \gamma_1^2-3 c_2(\mathbf{t}_+)) t_0^3+(9 c_2(\mathbf{t}_+) \gamma_1-c_3(\mathbf{t}_+)) t_0^2+(2 \gamma_4+4 c_2(\mathbf{t}_+) \gamma_1^2-5 c_3(\mathbf{t}_+) \gamma_1 \\ \nonumber
& +2 c_4(\mathbf{t}_+)+3 c_2(\mathbf{t}_+)^2) t_0+c_2(\mathbf{t}_+) \gamma_3-2 c_3(\mathbf{t}_+) \gamma_1^2+c_4(\mathbf{t}_+) \gamma_1-2 c_5(\mathbf{t}_+)-c_2(\mathbf{t}_+) c_3(\mathbf{t}_+)) \tau^3 \\ \nonumber
& +((2 t_0^2-8 \gamma_1 t_0-3 c_2(\mathbf{t}_+)) \omega+3 t_0^6-4 \gamma_1 t_0^5+(-2 \gamma_1^2-6 c_2(\mathbf{t}_+)) t_0^4+(-2 \gamma_3+4 c_2(\mathbf{t}_+) \gamma_1+2 c_3(\mathbf{t}_+)) t_0^3 \\ \nonumber
& +(2 c_2(\mathbf{t}_+) \gamma_1^2-4 c_3(\mathbf{t}_+) \gamma_1-c_4(\mathbf{t}_+)+3 c_2(\mathbf{t}_+)^2) t_0^2+(4 \gamma_1 \gamma_4+2 c_2(\mathbf{t}_+) \gamma_3-2 c_3(\mathbf{t}_+) \gamma_1^2 \\ \nonumber
& +4 c_4(\mathbf{t}_+) \gamma_1-2 c_2(\mathbf{t}_+) c_3(\mathbf{t}_+)) t_0+c_2(\mathbf{t}_+) \gamma_4-c_3(\mathbf{t}_+) \gamma_3+2 c_4(\mathbf{t}_+) \gamma_1^2-c_5(\mathbf{t}_+) \gamma_1+c_2(\mathbf{t}_+) c_4(\mathbf{t}_+)) \tau^2 \\ \nonumber
& +((2 t_0^3-2 c_2(\mathbf{t}_+) t_0) \omega+t_0^7-2 c_2(\mathbf{t}_+) t_0^5+(c_3(\mathbf{t}_+)-\gamma_3) t_0^4+(-\gamma_4-c_4(\mathbf{t}_+)+c_2(\mathbf{t}_+)^2) t_0^3 \\ \nonumber
& +(c_2(\mathbf{t}_+) \gamma_3+c_5(\mathbf{t}_+)-c_2(\mathbf{t}_+) c_3(\mathbf{t}_+)) t_0^2+(c_2(\mathbf{t}_+) \gamma_4-c_3(\mathbf{t}_+) \gamma_3-4 c_5(\mathbf{t}_+) \gamma_1+c_2(\mathbf{t}_+) c_4(\mathbf{t}_+)) t_0 \\ \nonumber
& +c_4(\mathbf{t}_+) \gamma_3-2 c_5(\mathbf{t}_+) \gamma_1^2-c_2(\mathbf{t}_+) c_5(\mathbf{t}_+)) \tau+(t_0^4-c_2(\mathbf{t}_+) t_0^2+c_3(\mathbf{t}_+) t_0-c_4(\mathbf{t}_+)) \omega = 0.
\end{align}

\begin{landscape}
\begin{table}[p]
\tablinesep =15pt
  \caption{}
  \centering
  \begin{tabular}{c|c|c}
  \label{rho_lambda}
	$\lambda$															& $\bar{t} - \lambda$												& $\rho_\lambda$ \\ \hline
   	$\bar{t}$																& $0$ 													& $e$					 \\
	${\displaystyle-e_n +\frac{1}{2}\sum_{i=1}^5 e_i + \frac{1}{6}(e_8 - e_7 - e_6)}$ 			& ${\displaystyle e_n + e_8 - \frac{1}{2}\sum_{i=1}^8 e_i}$ 				& ${\displaystyle  r \biggl(e_n + e_8 - \frac{1}{2}\sum_{i=1}^8 e_i\biggr)}$	\\ 
	${\displaystyle-e_l -e_m -e_n +\frac{1}{2}\sum_{i=1}^5 e_i + \frac{1}{6}(e_8 - e_7 - e_6)}$ 	& ${\displaystyle e_l +e_m +e_n + e_8 - \frac{1}{2}\sum_{i=1}^8 e_i}$			& ${\displaystyle  r \biggl(e_l +e_m +e_n + e_8 - \frac{1}{2}\sum_{i=1}^8 e_i\biggr)}$	\\
	${\displaystyle-\frac{1}{2}\sum_{i=1}^5 e_i + \frac{1}{6}(e_8 - e_7 - e_6)}$ 				& ${\displaystyle  \frac{1}{2}\sum_{i=1}^5 e_i + \frac{1}{2}(e_8 - e_7 - e_6)}$	& ${\displaystyle  r \biggl(\frac{1}{2}\sum_{i=1}^5 e_i + \frac{1}{2}(e_8 - e_7 - e_6)\biggr)}$	\\
	${\displaystyle e_j - \frac{1}{3}(e_8 -e_7 -e_6)}$ 								& $-e_j + e_8 -e_7 -e_6$											& 
	$\begin{aligned}
	 & r \biggl(e_k - \frac{1}{2}\sum_{i=1}^5 e_i + \frac{1}{2}(e_8 - e_7 - e_6)\biggr)\\
	 &\cdot  r \biggl(-e_j -e_k + \frac{1}{2}\sum_{i=1}^5 e_i + \frac{1}{2}(e_8 - e_7 - e_6)\biggr)
	\end{aligned}$\\
	${\displaystyle -e_j - \frac{1}{3}(e_8 -e_7 -e_6)}$ 								& $e_j + e_8 -e_7 -e_6$											& 
      $\begin{aligned}
	 & r (e_j + e_k) \cdot  r \biggl(e_j - \frac{1}{2}\sum_{i=1}^5 e_i + \frac{1}{2}(e_8 - e_7 - e_6)\biggr)\\
	 &\cdot  r \biggl(-e_j -e_k + \frac{1}{2}\sum_{i=1}^5 e_i + \frac{1}{2}(e_8 - e_7 - e_6)\biggr)
	\end{aligned}$\\
  \end{tabular}\\
where $1 \leq l$, $m$, $n \leq 5$ are distinct,
$1 \leq j$, $k \leq 5$ are distinct,
and, for $\alpha \in \Phi$, $r(\alpha)$ denotes the reflection associated with $\alpha$.
\end{table}
\end{landscape}

\begin{table}[h]
\tablinesep =15pt
  \caption{}
  \begin{tabular}{c|c}
  \label{omega}
    $\lambda$														& $\omega(\rho_\lambda)$ \\ \hline
    $\bar{t}$   														& $0$ \\
    ${\displaystyle -e_n +\frac{1}{2}\sum_{i=1}^5 e_i + \frac{1}{6}(e_8-e_7-e_6)}$  		& $0$ \\
    ${\displaystyle -e_l -e_m -e_n +\frac{1}{2}\sum_{i=1}^5 e_i + \frac{1}{6}(e_8-e_7-e_6)}$	& $(e_l+e_m)(e_m+e_n)(e_n+e_l)(\bar{t}-\lambda)$ \\
    ${\displaystyle -\frac{1}{2}\sum_{i=1}^5 e_i + \frac{1}{6}(e_8-e_7-e_6)}$ 			& $(\bar{t}-\lambda)(c_1(\mathbf{e})c_2(\mathbf{e})-c_3(\mathbf{e}))$ \\
    ${\displaystyle e_j - \frac{1}{3}(e_8 -e_7 -e_6)}$ 							& ${\displaystyle \prod_{n\neq j}\biggl( -e_n +\frac{1}{2}\sum_{i=1}^5 e_i + \frac{1}{6}(e_8-e_7-e_6) -\lambda \biggr)}$ \\
    ${\displaystyle -e_j - \frac{1}{3}(e_8 -e_7 -e_6)}$ 							& 
	$\begin{aligned}
	 & \biggl(\frac{1}{2}\sum_{i=1}^5 e_i + \frac{1}{2}(e_8 - e_7 - e_6)\biggr)\\
	 &\cdot  \biggl\{\prod_{n \neq j,k} \Bigl(-e_n -e_k +\frac{1}{2}\sum_{i=1}^5 e_i + \frac{1}{2}(e_8 - e_7 - e_6) \Bigr)\\
       & \quad +(e_j+e_k)(\bar{t}-\lambda)\Bigl(-e_k +\frac{1}{2}\sum_{i=1}^5 e_i + \frac{1}{2}(e_8 - e_7 - e_6)\Bigr)\biggr\}
	\end{aligned}$ \\
  \end{tabular}\\
where $1 \leq l$, $m$, $n \leq 5$ are distinct, $1 \leq j$, $k \leq 5$ are distinct, and $\mathbf{e}$ denotes $\{ e_1, \ldots, e_5 \}$.
\end{table}

\begin{table}[h]
\tablinesep =15pt
  \caption{}
  \centering
  \scalebox{0.75}{ 
  \begin{tabular}{c|c}
  \label{EIII seq.}
	vertices $\rho_\lambda$ for $\lambda$  & GKM functions \\ \hline \hline
	$\bar{t}$ & $1$ \\
	$-e_1 +\frac{1}{2}\sum_{i=1}^5 e_i +\frac{1}{4}\bar{t}$ & $\bar{z} - \bar{t}$ \\
	$-e_2 +\frac{1}{2}\sum_{i=1}^5 e_i +\frac{1}{4}\bar{t}$ & $(\bar{z} - \bar{t})(\bar{z} -(-e_1 +\frac{1}{2}\sum_{i=1}^5 e_i +\frac{1}{4}\bar{t}))$ \\
	$-e_3 +\frac{1}{2}\sum_{i=1}^5 e_i +\frac{1}{4}\bar{t}$ & $(\bar{z} - \bar{t}) \prod_{j=1}^2(\bar{z} -(-e_j +\frac{1}{2}\sum_{i=1}^5 e_i +\frac{1}{4}\bar{t}))$ \\ 
	$-e_4 +\frac{1}{2}\sum_{i=1}^5 e_i +\frac{1}{4}\bar{t}$ & $(\bar{z} - \bar{t}) \prod_{j=1}^3(\bar{z} -(-e_j +\frac{1}{2}\sum_{i=1}^5 e_i +\frac{1}{4}\bar{t}))$ \\
	$-e_5 +\frac{1}{2}\sum_{i=1}^5 e_i +\frac{1}{4}\bar{t}$ & $(\bar{z} - \bar{t}) \prod_{j=1}^4(\bar{z} -(-e_j +\frac{1}{2}\sum_{i=1}^5 e_i +\frac{1}{4}\bar{t}))$ \\ \hline
	$e_4+e_5-\frac{1}{2}\sum_{i=1}^5 e_i +\frac{1}{4}\bar{t}$ & $\omega$ \\
	$e_3+e_5-\frac{1}{2}\sum_{i=1}^5 e_i +\frac{1}{4}\bar{t}$ & $\omega(\bar{z}-(e_4+e_5-\frac{1}{2}\sum_{i=1}^5 e_i +\frac{1}{4}\bar{t}))$ \\
	$e_3+e_4-\frac{1}{2}\sum_{i=1}^5 e_i +\frac{1}{4}\bar{t}$ & $\omega \prod_{j=3}^4(\bar{z}-(e_j+e_5-\frac{1}{2}\sum_{i=1}^5 e_i +\frac{1}{4}\bar{t}))$ \\ \hline
	$e_5 -\frac{1}{2}\bar{t}$ & $\omega_6$ \\
	$e_4 -\frac{1}{2}\bar{t}$ & $\omega_6 (\bar{z} - (e_5 -\frac{1}{2}\bar{t}))$ \\
	$e_3 -\frac{1}{2}\bar{t}$ & $\omega_6 \prod_{j=4}^5(\bar{z} - (e_j -\frac{1}{2}\bar{t}))$ \\ \hline
	$e_2+e_5 -\frac{1}{2}\sum_{i=1}^5 e_i +\frac{1}{4}\bar{t}$ & $\omega_7$ \\
	$e_2+e_4 -\frac{1}{2}\sum_{i=1}^5 e_i +\frac{1}{4}\bar{t}$ & $\omega_7(\bar{z}-(e_2+e_5-\frac{1}{2}\sum_{i=1}^5 e_i +\frac{1}{4}\bar{t}))$ \\
	$e_2+e_3 -\frac{1}{2}\sum_{i=1}^5 e_i +\frac{1}{4}\bar{t}$ & $\omega_7 \prod_{j=4}^5(\bar{z}-(e_2+e_j-\frac{1}{2}\sum_{i=1}^5 e_i +\frac{1}{4}\bar{t}))$ \\
	$e_2 -\frac{1}{2}\bar{t}$ & $\omega_7 \prod_{j=3}^5(\bar{z}-(e_2+e_j-\frac{1}{2}\sum_{i=1}^5 e_i +\frac{1}{4}\bar{t}))$ \\
	$-e_1 -\frac{1}{2}\bar{t}$ & $\omega_7 (\bar{z} -(e_2 -\frac{1}{2}\bar{t}))\prod_{j=3}^5(\bar{z}-(e_2+e_j-\frac{1}{2}\sum_{i=1}^5 e_i +\frac{1}{4}\bar{t}))$ \\ \hline
	$-e_2 -\frac{1}{2}\bar{t}$ & $\omega_8$ \\
	$-e_3 -\frac{1}{2}\bar{t}$ & $\omega_8 (\bar{z} - (-e_2 -\frac{1}{2}\bar{t}))$ \\
	$-e_4 -\frac{1}{2}\bar{t}$ & $\omega_8 \prod_{j=2}^3(\bar{z} - (-e_j -\frac{1}{2}\bar{t}))$ \\
	$-e_5 -\frac{1}{2}\bar{t}$ & $\omega_8 \prod_{j=2}^4(\bar{z} - (-e_j -\frac{1}{2}\bar{t}))$ \\
	$-\frac{1}{2}\sum_{i=1}^5 e_i +\frac{1}{4}\bar{t}$ & $\omega_8 \prod_{j=2}^5(\bar{z} - (-e_j -\frac{1}{2}\bar{t}))$ \\ \hline
	$e_1+e_5-\frac{1}{2}\sum_{i=1}^5 e_i +\frac{1}{4}\bar{t}$ & $\omega_{12}$ \\
	$e_1+e_4-\frac{1}{2}\sum_{i=1}^5 e_i +\frac{1}{4}\bar{t}$ & $\omega_{12}(\bar{z}-(e_1+e_5-\frac{1}{2}\sum_{i=1}^5 e_i +\frac{1}{4}\bar{t}))$ \\
	$e_1+e_3-\frac{1}{2}\sum_{i=1}^5 e_i +\frac{1}{4}\bar{t}$ & $\omega_{12} \prod_{j=4}^5(\bar{z}-(e_1+e_j-\frac{1}{2}\sum_{i=1}^5 e_i +\frac{1}{4}\bar{t}))$ \\
	$e_1+e_2-\frac{1}{2}\sum_{i=1}^5 e_i +\frac{1}{4}\bar{t}$ & $\omega_{12} \prod_{j=3}^5(\bar{z}-(e_1+e_j-\frac{1}{2}\sum_{i=1}^5 e_i +\frac{1}{4}\bar{t}))$ \\
	$e_1 -\frac{1}{2}\bar{t}$ & $\omega_{12} \prod_{j=2}^5(\bar{z}-(e_1+e_j-\frac{1}{2}\sum_{i=1}^5 e_i +\frac{1}{4}\bar{t}))$ \\
  \end{tabular}
  }
\end{table}


\begin{thebibliography}{99}
\bibitem[A]{A}  J. F. Adams,
{\it Lectures on exceptional Lie groups}
University of Chicago Press, Chicago, 1996.

\bibitem[B]{B}  N. Bourbaki,
{\it \'El\'ements de math\'ematique. Groupes et alg\`ebres de Lie. Chapitres 4 \`a 6},
Hermann, Paris, 1968.

\bibitem[BGG]{BGG} I. N. Bernstein, I. M. Gel'fand, and S. I. Gel'fand,
{\it Schubert cells, and the cohomology of the spaces $G/P$}
Uspehi Mat. Nauk {\bf 28} (1973), no. 3 (171), 3-26.

\bibitem[D]{D}  V. V. Deodhar,
{\it Some characterizations of Bruhat ordering on a Coxeter group and determination of the relative M\"obius function},
Invent. Math. {\bf 39} (1977), no. 2, 187-198.

\bibitem[FIM]{FIM}  Y. Fukukawa, H. Ishida, and M. Masuda,
{\it The cohomology ring of the GKM graph of a flag manifold of classical type},
arXiv:1104.1832v3.

\bibitem[GKM]{GKM}  M. Goresky, R. Kottwitz, and R. MacPherson,
{\it Equivariant cohomology, Koszul duality and the localization theorem},
Invent. Math. {\bf 131} (1998), no. 1, 25-83.

\bibitem[GSZ]{GSZ}  V. Guillemin, S. Sabatini and C. Zara,
{\it Cohomology of GKM fiber bundles},
J. Algebraic Combin.
{\bf 35}  (2012),  no. 1, 19-59. 

\bibitem[GZ]{GZ}  V. Guillemin and C. Zara,
{\it 1-skeleta, Betti numbers, and equivariant cohomology},
Duke Math. J. {\bf 107} (2001), no. 2, 283-349.

\bibitem[Hs]{Hs}  W. Y. Hsiang,
{\it Cohomology Theory of Topological Transformation Groups},
Ergebnisse der Mathematik und ihrer Grenzgebiete, Band 85. Springer-Verlag, New York-Heidelberg, 1975.

\bibitem[Hu]{Hu}  J. E. Humphreys,
{\it Linear algebraic groups},
Graduate Texts in Mathematics, No. 21. Springer-Verlag, New York-Heidelberg (1975).

\bibitem[HHH]{HHH}  M. Harada, A. Henriques, and T. Holm,
{\it Computation of generalized equivariant cohomologies of Kac-Moody flag varieties},
Adv. Math. {\bf 197} (2005), no 1, 198-221.

\bibitem[Ka]{Ka}  J. Kamnitzer,
{\it Representation theory of compact groups and complex reductive groups, Winter 2011},
http://www.math.toronto.edu/jkamnitz/courses/reptheory/reptheory.pdf

\bibitem[M]{M}  H. Matsumura,
{\it Commutative ring theory},
Cambridge studies in advanced mathematics, {\bf 8},
Cambridge University Press, Cambridge, 1986.

\bibitem[MT]{MT} M. Mimura, H.Toda,
{\it Topology of Lie groups. I, II},
Translations of Mathematical Monographs, {\bf 91},
American Mathematical Society, Providence, RI, 1991. iv+451 pp. ISBN: 0-8218-4541-1

\bibitem[P]{P}  D, S, Passman,
{\it A course in ring theory},
The Wadsworth \& Brooks/Cole Mathematics Series,
Wadsworth \& Brooks/Cole Advanced Books \& Software, Pacific Grove, CA (1991),
ISBN: 0-534-13776-8.

\bibitem[S]{S}  T. Sato,
{\it The $T$-equivariant integral cohomology ring of $F_4/T$},
to appear in Kyoto J. Math.

\bibitem[TW]{TW}  H. Toda and T. Watanabe,
{\it The integral cohomology ring of $F_4/T$ and $E_6/T$},
J. Math. Kyoto Univ. {\bf 14-2} (1974), 257-286.

\end{thebibliography}
\end{document}